\DeclareMathOperator{\supp}{supp}
\DeclareMathOperator{\dist}{dist}
\DeclareMathOperator{\loc}{loc}
\numberwithin{equation}{section}
\newtheorem{Def}{Definition}[section]
\newtheorem{Lemma}[Def]{Lemma}
\newtheorem{Cor}[Def]{Corollary}
\newtheorem{Theorem}[Def]{Theorem}
\newtheorem{Prop}[Def]{Proposition}
\newtheorem{Remark}[Def]{Remark}
\let\Re=\relax
\DeclareMathOperator{\Re}{Re}
\newcommand{\R}{\ensuremath{\mathbb{R}}}
\newcommand{\C}{\ensuremath{\mathbb{C}}}
\newcommand{\N}{\ensuremath{\mathbb{N}}}
\newcommand{\Z}{\ensuremath{\mathbb{Z}}}
\newcommand{\Eins}{\ensuremath{\mathds{1}}}
\newcommand{\Pb}{\ensuremath{\mathbb{P}}}
\newcommand{\calS}{\ensuremath{\mathcal{S}}}
\newcommand{\calD}{\ensuremath{\mathcal{D}}}
\newcommand{\calE}{\ensuremath{\mathcal{E}}}
\newcommand{\calM}{\ensuremath{\mathcal{M}}}
\newcommand{\calR}{\ensuremath{\mathcal{R}}}
\newcommand{\calA}{\ensuremath{\mathcal{A}}}
\newcommand{\calL}{\ensuremath{\mathcal{L}}}
\newcommand{\calT}{\ensuremath{\mathcal{T}}}
\newcommand{\norm}[1]{\left\|#1\right\|}
\newcommand{\abs}[1]{\left|#1\right|}
\newcommand{\nnorm}[1]{\|#1\|}
\newcommand{\skp}[1]{\langle #1 \rangle}
\newcommand{\eps}{\ensuremath{\varepsilon}}
\newcommand{\ta}{{\scriptscriptstyle \parallel}}
\newcommand{\no}{{\scriptscriptstyle\perp}}
\let\div=\relax
\DeclareMathOperator{\div}{div}
\DeclareMathOperator{\el}{ell}
\newcommand{\ldot}{\,.\,}
\def\Xint#1{\mathchoice
   {\XXint\displaystyle\textstyle{#1}}%
   {\XXint\textstyle\scriptstyle{#1}}%
   {\XXint\scriptstyle\scriptscriptstyle{#1}}%
   {\XXint\scriptscriptstyle\scriptscriptstyle{#1}}%
   \!\int}
\def\XXint#1#2#3{{\setbox0=\hbox{$#1{#2#3}{\int}$}
     \vcenter{\hbox{$#2#3$}}\kern-.5\wd0}}
\def\barint{\Xint-}
\definecolor{gr}{rgb}   {0.,   0.8,   0. } 
\definecolor{bl}{rgb}   {0.,   0.5,   1. } 
\definecolor{mg}{rgb}   {0.7,  0.,    0.7}
\begin{document}

\title[Well-posedness for parabolic  equations with $BMO^{-1}$ data]{On well-posedness of parabolic equations of Navier-Stokes type with $BMO^{-1}$ data}

\author{Pascal Auscher}
\address{Pascal Auscher - Univ. Paris-Sud, laboratoire de Math\'ematiques, UMR 8628 du CNRS, F-91405 {\sc Orsay}} 
\email{pascal.auscher@math.u-psud.fr}

\author{Dorothee Frey}
\address{Dorothee Frey - Univ. Paris-Sud, laboratoire de Math\'ematiques, UMR 8628 du CNRS, F-91405 {\sc Orsay}}
\email{dorothee.frey@univ-nantes.fr}

\subjclass[2010]{
35Q10,  %Navier-Stokes equations (in PDE)
76D05, % Navier-Stokes equations (in fluid mechanics)
42B37, % Harmonic Analysis and PDE
42B35} % Function spaces arising in harmonic analysis

\keywords{Navier-Stokes equations; tent spaces; maximal regularity; 
Hardy spaces.}

\date{March 30, 2015}

\begin{abstract} We develop a strategy making extensive use of tent spaces to study parabolic equations with quadratic nonlinearities as for the Navier-Stokes system. 
We begin with  a new proof of the well-known result of Koch and Tataru on the well-posedness of Navier-Stokes equations in $\R^n$ with small initial data in $BMO^{-1}(\R^n)$. 
We then study another model where neither pointwise kernel bounds nor self-adjointness are available.      
\end{abstract}

\maketitle

\section{Introduction}

In \cite{KochTataru}, it was shown that the incompressible Navier-Stokes equations in $\R^n$ are well-posed for small initial data in $BMO^{-1}(\R^n)$. 
The result was a breakthrough, and it is believed to be  best possible, in the sense that $BMO^{-1}(\R^n)$ is the largest possible space with the scaling of $L^n(\R^n)$ where the incompressible Navier-Stokes equations are proved to be well-posed. Ill-posedness  is shown  in the largest possible space $B^{-1}_{{\infty,\infty}}(\R^n)$ in \cite{BourgainPavlovic}, and in a space between $BMO^{-1}(\R^n)$ and $B^{-1}_{{\infty,\infty}}(\R^n)$ in \cite{Yoneda}. See also some counter-examples of this type in \cite{AT}.  \\

The proof in  \cite{KochTataru} reduces to establishing the boundedness of a bilinear operator. This  proof has two main ingredients: bounds coming from the representation of the Laplacian (such as the estimates for the Oseen kernel) and, in the crucial step,   self-adjointness of the Laplacian to  obtain an energy estimate using a clever integration by parts.
Our new proof is rather based on operator theoretical arguments with  emphasis on use of tent spaces, maximal regularity operators and Hardy spaces.
In particular, we do not make use of self-adjointness of the Laplacian: we obtain the energy estimate by using
Hardy space estimates for the main term and cruder estimates for a remainder term.
 Although more involved for the Navier-Stokes system as compared to the original proof, our argument  is flexible enough to adapt to other models. We illustrate this at the end of the article by treating a more complicated model with rougher operators.   
\\

    That  our techniques  have generalisations to rougher operators is  thanks to recent works on maximal regularity in tent spaces (cf. \cite{AMP} and \cite{AKMP}) and  on Hardy spaces associated with (bi-)sectorial operators (cf. \cite{ADM}, \cite{AMR}, \cite{HM}, \cite{HMMc} and followers). 
 Using those results, it is possible to adapt our new proof to operators whose gradient of the semigroup (or the semigroup itself, although we do not do it here) only satisfies bounds of non-pointwise type.  
This  could  open up the way to possible generalisations for Navier-Stokes equations on rougher domains and in other type of geometry (cf. \cite{Taylor}, \cite{MitreaTaylor}, \cite{MitreaMonniaux1}, \cite{MitreaMonniaux2} for Lipschitz domains in Riemannian manifolds, and \cite{BahouriGallagher} on the Heisenberg group), geometric flows (cf. \cite{KochLamm}),
or  other semilinear parabolic equations of a similar structure, but for rougher domains or operators (cf. \cite{MarchandLemarie} for dissipative quasi-geostrophic equations, and \cite{Giga}, \cite{HaakKunstmann} for abstract formulations of parabolic equations with quadratic nonlinearity).  Let us also mention the survey article \cite{KL1}, which considers parabolic equations with a similar structure. The solution spaces considered have some similarities with the ones we consider in Section \ref{sec:modelcase}. The approach in \cite{KL1} seems more suitable for applications on uniform manifolds, but restricted to operators with pointwise bounds, whereas one of the key aspects of this article is to  show that our methods can be adapted to operators that satisfy non-pointwise bounds.    \\

  Potential applications  may also be  stochastic Navier-Stokes equations (cf. e.g. \cite{LiuRoeckner} and the references therein). The maximal regularity operators on tent spaces we are relying on in our proof, have proven useful already for other stochastic differential equations (cf. \cite{AvNP}).\\

 \section{The new proof of Koch-Tataru's result}
\label{section:newProof}

Consider the incompressible Navier-Stokes equations 
\begin{align*}\tag{NSE} \label{NSE}
\left\{ \begin{array}{rl}
u_t + (u \cdot \nabla) u - \Delta u +\nabla p  \!\! &= \; 0 \\
\div u \!\! &= \;0 \\
u(0,\,.\,) \!\! &= \;u_0,
 \end{array} \right.
\end{align*}
where $u(t,x)$ is the velocity and $p(t,x)$ the pressure with $(t,x)\in \R^{n+1}_{+}=(0,\infty) \times \R^n$. As usual, the pressure term can be eliminated by applying the Leray projection $\Pb$.  It is known from \cite{FLRT} that the differential Navier-Stokes equations are equivalent to their integrated counterpart 
\begin{align*} 
 \left\{ \begin{array}{rl} 
 u(t,\ldot) \!\! &= \; e^{t\Delta}u_0 - \int_0^t e^{(t-s)\Delta} \Pb \div (u(s,\ldot) \otimes u(s,\ldot)) \,ds \\
 \div u_{0} \!\! &= \; 0
 \end{array} \right.
\end{align*}
under an assumption of uniform  local square integrability of $u$. (In fact, under such a control on $u$, most possible formulations of the Navier-Stokes equations are equivalent, as shown by the nice note of Dubois \cite{Dubois}.) 
Using the Picard contraction principle, matters reduce to showing that the bilinear operator $B$, defined by
\begin{align} \label{Def-BilinearOp}
	B(u,v)(t,\ldot):=\int_0^t e^{(t-s)\Delta} \Pb \div ((u \otimes v(s,\ldot))\,ds,
\end{align}
is bounded on an appropriately defined admissible path space to which the free evolution $e^{t\Delta} u_{0}$ belongs. This is what we reprove with an argument  based on boundedness of singular integrals like operators on parabolically scaled tent spaces.

\

For a ball $B:=B(x,r) \subseteq \R^n$, denote $\lambda B=\lambda B(x,r)=B(x,\lambda r)$, and $S_0(B)=B$, $S_j(B)=2^{j}B \setminus 2^{j-1}B$ for $j \geq 1$.   We use the following tent spaces on $\R^{n+1}_{+}$.

\begin{Def} \label{def:tent}
The tent space $T^{1,2}(\R^{n+1}_+)$ is defined as the space of all measurable functions $F$ in $\R^{n+1}_+$ such that 
\[	
	\norm{F}_{T^{1,2}(\R^{n+1}_+)} = \int_{\R^n} \left(\iint_{\R^{n+1}_+} t^{-n/2}\Eins_{B(x,\sqrt{t})}(y) \abs{F(t,y)}^2 \,dydt \right)^{1/2} dx < \infty.
\]
The tent spaces $T^{\infty,1}(\R^{n+1}_+)$ and $T^{\infty,2}(\R^{n+1}_+)$ are defined as the spaces of all measurable functions $F$ in $\R^{n+1}_+$
such that 
\[
	\norm{F}_{T^{\infty,p}(\R^{n+1}_+)}
	 = \sup_{x \in \R^n} \sup_{t>0} \left(t^{-n/2} \int_0^t \int_{B(x,\sqrt{t})} \abs{F(s,y)}^p \,dyds\right)^{1/p} <\infty, 
\]
for $p \in \{1,2\}$, respectively.\\ 
The tent space $T^{1,\infty}(\R^{n+1}_+)$ is defined as the space of all continuous functions $F:\R^{n+1}_+ \to \C$ such that the parabolic non-tangential limit $\lim_{\substack{ (t,y) \to x\\  x\in B(y,\sqrt{t}) }} F(t,y)$ exists for a.e. $x \in \R^n$ and 
\[
	\norm{F}_{T^{1,\infty}(\R^{n+1}_+)} = \nnorm{N(F)}_{L^1(\R^n)}  <\infty,\]
where $N$, defined by $N(F)(x):=\sup_{(t,y);   x\in B(y,\sqrt{t})} \abs{F(t,y)}$, denotes the non-tangential maximal function.
\end{Def}

The tent spaces were introduced in \cite{CMS}, but in elliptic scaling. 
It is easy to check that 
\[
	F \in T^{1,2}(\R^{n+1}_+) \quad \Leftrightarrow \quad 
	G \in T^{1,2}_{\el}(\R^{n+1}_+), \qquad \text{where} \ G(t,\,.\,) :=tF(t^2,\,.\,),
\]
and $T^{1,2}_{\el}(\R^{n+1}_+)$ denotes the tent space in elliptic scaling denoted by $T^1_{2}$ in \cite{CMS}.
The same correspondence  holds true for $T^{\infty,2}(\R^{n+1}_+)$. For $T^{1,\infty}(\R^{n+1}_+)$, the correspondence is  $G(t,\,.\,):=F(t^2,\,.\,)$, and for $T^{\infty,1}(\R^{n+1}_+)$, $G(t,\,.\,):=t^2F(t^2,\,.\,)$.\\
One has the duality $(T^{1,2}(\R^{n+1}_+))' = T^{\infty,2}(\R^{n+1}_+)$ and $(T^{1,\infty}(\R^{n+1}_+))' \supset  T^{\infty,1}(\R^{n+1}_+)$  with duality form $\iint_{\R^{n+1}_{+}} f(t,y) \overline {g(t,y)}\, dydt$. For the later, we observe that for $h\in T^{\infty,1}$, then $d\mu=h(t,x) \, {dxdt}$ is a (parabolic) Carleson measure, that is an element of the dual space to $T^{1,\infty}(\R^{n+1}_+)$. \\

We recall the definition of the admissible path space for \eqref{NSE}  in \cite{KochTataru} (with the notation as in \cite{Lemarie}).

\begin{Def} \label{Def:ET}
Let $T \in (0,\infty]$. Define
\[
	\calE_T:=\{ u \ \mathrm{measurable\ in\ } (0,T) \times \R^n \,:\, \norm{u}_{\calE_T} <\infty\},
\]
with
\[
	 \norm{u}_{\calE_T} :=  \norm{t^{1/2}u}_{L^{\infty}((0,T)\times\R^n)} + \sup_{x \in \R^n} \sup_{0<t<T} \left(t^{-n/2} \int_0^t \int_{B(x,\sqrt{t})} \abs{u(s,y)}^2 \,dyds\right)^{1/2}.
\]
\end{Def}

\begin{Remark}
(i) Observe that for $T=\infty$, one has 
\begin{align} \label{Def-ET-infinity}
	\norm{u}_{\calE_\infty} 
	=  \norm{t^{1/2} u}_{L^\infty(\R^{n+1}_{+})} 
	+ \norm{u}_{T^{\infty,2}(\R^{n+1}_+)}.
\end{align}
(ii) The corresponding adapted value space $E_T$ is defined as the space of $u_0 \in \calS'(\R^n)$ with $(e^{t\Delta}u_0)_{0<t<T} \in \calE_T$. 
For $T=\infty$, observe that the first part of the norm in \eqref{Def-ET-infinity} corresponds to the adapted value space $\dot{B}^{-1}_{\infty,\infty}(\R^n)$ and the second part to $BMO^{-1}(\R^n)$. Since $BMO^{-1}(\R^n) \hookrightarrow \dot{B}^{-1}_{\infty,\infty}(\R^n)$, one has $E_\infty=BMO^{-1}(\R^n)$.
\end{Remark}

\begin{Theorem} \label{mainTheorem}
Let $T \in (0,\infty]$. The bilinear operator $B$ defined in \eqref{Def-BilinearOp} is continuous from $(\calE_T)^n \times (\calE_T)^n$ to $(\calE_T)^n$.
\end{Theorem}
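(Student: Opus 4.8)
Write $g:=u\otimes v$ (working componentwise throughout). By a routine causality argument — extending $u,v$ by $0$ for $t\ge T$, which increases the $\calE_\infty$-norm only by a bounded factor and does not change $B(u,v)$ on $(0,T)$ — it suffices to treat $T=\infty$, with constants independent of $T$. The norm in \eqref{Def-ET-infinity} has two pieces, so the claim reduces to the two estimates
\[
\text{(A)}\quad \norm{t^{1/2}B(u,v)}_{L^\infty(\R^{n+1}_+)}\lesssim \norm{u}_{\calE_\infty}\norm{v}_{\calE_\infty},\qquad
\text{(B)}\quad \norm{B(u,v)}_{T^{\infty,2}(\R^{n+1}_+)}\lesssim \norm{u}_{\calE_\infty}\norm{v}_{\calE_\infty}.
\]
The only facts about $g$ used are two elementary consequences of $u,v\in\calE_\infty$: the weighted pointwise bound $\norm{sg}_{L^\infty}\le\norm{s^{1/2}u}_\infty\norm{s^{1/2}v}_\infty$, and, via the Cauchy--Schwarz inequality inside each parabolic Carleson box, the tent-space bound $\norm{g}_{T^{\infty,1}}\lesssim\norm{u}_{T^{\infty,2}}\norm{v}_{T^{\infty,2}}$.

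\textbf{Estimate (A).} This is close to the original Koch--Tataru estimate and relies on the Oseen-type pointwise bound $\abs{k_\tau(x)}\lesssim\tau^{-(n+1)/2}(1+\abs{x}\tau^{-1/2})^{-N}$ for the kernel of $e^{\tau\Delta}\Pb\div$, any $N$. Splitting $\int_0^t=\int_0^{t/2}+\int_{t/2}^t$: on $(t/2,t)$ use $\norm{k_\tau}_1\lesssim\tau^{-1/2}$ together with $\norm{g(s,\ldot)}_\infty\le s^{-1}\norm{sg}_\infty\lesssim t^{-1}\norm{sg}_\infty$ and integrate; on $(0,t/2)$ one has $t-s\simeq t$, so one uses the kernel pointwise and controls $\int_0^{t/2}\int_{S_j(B(x,\sqrt t))}\abs g$ on each dyadic annulus by $\norm{g}_{T^{\infty,1}}$, summing the geometric series in $j$ by the kernel decay. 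This gives $\abs{B(u,v)(t,x)}\lesssim t^{-1/2}(\norm{sg}_\infty+\norm{g}_{T^{\infty,1}})$.

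\textbf{Estimate (B): the energy estimate.} Here the self-adjointness of $\Delta$ is avoided. Use $T^{\infty,2}=(T^{1,2})'$: it suffices to bound $\abs{\iint_{\R^{n+1}_+}B(g)\,\overline h}$ for $\norm{h}_{T^{1,2}}\le1$. Fubini together with the \emph{formal} transpose $(e^{\tau\Delta}\Pb\div)^\ast=-\nabla\Pb e^{\tau\Delta}$ (a purely algebraic identity; only boundedness of the semigroup is used) yields $\iint B(g)\,\overline h=\iint g\,\overline{\calR h}$, where
\[
\calR h(s,\ldot):=-\int_s^\infty \nabla e^{(t-s)\Delta}\Pb\, h(t,\ldot)\,dt.
\]
Decompose $\calR=\calR_{\mathrm{main}}+\calR_{\mathrm{rem}}$ by inserting a cutoff separating the near-diagonal region $t-s\lesssim s$ from the tail $t-s\gg s$. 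For the near-diagonal main term the key input is the Hardy-space-type bound $\calR_{\mathrm{main}}\colon T^{1,2}(\R^{n+1}_+)\to T^{1,\infty}(\R^{n+1}_+)$, which is exactly where the recent results on maximal regularity operators on tent spaces (\cite{AMP}, \cite{AKMP}) and on Hardy spaces associated with (bi-)sectorial operators (\cite{ADM}, \cite{AMR}, \cite{HM}, \cite{HMMc}) enter, through square-function / non-tangential-maximal-function comparisons and molecular decompositions; granting it, the $T^{\infty,1}$--$T^{1,\infty}$ Carleson duality recalled above gives $\abs{\iint g\,\overline{\calR_{\mathrm{main}}h}}\lesssim\norm{g}_{T^{\infty,1}}\norm{h}_{T^{1,2}}\lesssim\norm{u}_{\calE_\infty}\norm{v}_{\calE_\infty}$. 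For the remainder $\calR_{\mathrm{rem}}$ one has $t-s\simeq t$, so every operator is smooth at scale $\sqrt t$ and crude kernel estimates apply; one arranges, e.g., $\calR_{\mathrm{rem}}\colon T^{1,2}\to L^1(\R^{n+1}_+;\,s^{-1}\,ds\,dx)$, and since this weighted $L^1$-space is in duality with $\{f:\ sf\in L^\infty\}$ one gets $\abs{\iint g\,\overline{\calR_{\mathrm{rem}}h}}\lesssim\norm{sg}_\infty\norm{h}_{T^{1,2}}\lesssim\norm{u}_{\calE_\infty}\norm{v}_{\calE_\infty}$. Adding the two pieces gives (B).

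\textbf{Main obstacle.} The crux is the Hardy-space boundedness $\calR_{\mathrm{main}}\colon T^{1,2}\to T^{1,\infty}$: it is the substitute for the rigid integration-by-parts/energy estimate of \cite{KochTataru}, and it must be produced without self-adjointness, purely from the (non-pointwise-compatible) off-diagonal decay of the semigroup and its gradient, fed into the tent- and Hardy-space calculus. The delicate point is to choose the main/remainder splitting so that the main part falls genuinely within the scope of the available $T^{p,2}$-boundedness theory for (fractional) maximal regularity operators while the tail remainder, though only crudely estimated, is summable against the weighted pointwise bound on $u\otimes v$; this bookkeeping is what replaces the short original argument and is what makes the method flexible enough to carry over to rougher operators.
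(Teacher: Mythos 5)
Your Step 1 (reduction to a linear estimate via H\"older) and your estimate (A) match the paper, but the heart of the matter, estimate (B), has two genuine gaps, and the architecture of your splitting is essentially the reverse of the one that works. First, your remainder claim $\calR_{\mathrm{rem}}\colon T^{1,2}\to L^1(\R^{n+1}_+;s^{-1}ds\,dx)$ is false. Test it on $h(t,y)=\Eins_{(1,2)}(t)\phi(y)$ with $\phi\in L^2$ compactly supported: for $s<1/2$ the cutoff is inactive and
\[
\calR_{\mathrm{rem}}h(s,\cdot)=\int_1^2\nabla e^{(t-s)\Delta}\Pb\,\phi\,dt\ \longrightarrow\ \nabla(-\Delta)^{-1}\Pb\,(e^{\Delta}-e^{2\Delta})\phi\neq0\qquad(s\to0),
\]
so $\iint|\calR_{\mathrm{rem}}h|\,s^{-1}ds\,dx\gtrsim\int_0^{1/2}\tfrac{ds}{s}=\infty$ while $\|h\|_{T^{1,2}}<\infty$. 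This is exactly the divergence mechanism of the Proposition in Section \ref{sec:comments}: in the regime $s\ll t$ no "crude" estimate paired with $\|s\,u\otimes v\|_\infty$ can work; it is precisely there that the $T^{\infty,1}$--$T^{1,\infty}$ Carleson pairing with a genuine Hardy-space bound is indispensable. Second, your main-term claim $\calR_{\mathrm{main}}\colon T^{1,2}\to T^{1,\infty}$ for the near-diagonal piece $t-s\lesssim s$ is unsupported and is not what the cited theory provides. The Fefferman--Stein input is $\|N(e^{s\Delta}h)\|_1\lesssim\|h\|_{H^1}$, which requires the $s$-dependence to enter only through the semigroup acting on one fixed $H^1$ function; in the paper this structure is created by the algebraic split $e^{(t-s)\Delta}=e^{(t-s)\Delta}(I-e^{2s\Delta})+e^{(t+s)\Delta}$, so the Hardy-space/duality argument is applied to the factorized kernel $e^{(t+s)\Delta}=e^{s\Delta}e^{t\Delta}$ (the operator $\calA_2$, whose adjoint is $e^{s\Delta}\calS G$ with $\calS G\in H^1$), not to the near-diagonal part. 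Dually, your claim amounts to boundedness of $\alpha\mapsto\int_{t/2}^te^{(t-s)\Delta}\Pb\div\alpha(s)\,ds$ from $T^{\infty,1}$ to $T^{\infty,2}$, an endpoint maximal-regularity statement with only $L^1$-in-time control; since $\|e^{\tau\Delta}\Pb\div\|_{L^1\to L^2}\simeq\tau^{-\frac12-\frac n4}$ is not integrable at $\tau=0$, absolute-value estimates fail and nothing in \cite{AMP}, \cite{AKMP} or the Hardy-space literature gives such a bound.

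What the paper does instead: the singular near-diagonal region is fed the third piece of information coming from H\"older, namely $\|s^{1/2}(u\otimes v)\|_{T^{\infty,2}}$ --- which your proof never uses --- through the boundedness of the maximal regularity operator $\calM^+$ on $T^{\infty,2}$ composed with the uniform family $T_s=(s\Delta)^{-1}(I-e^{2s\Delta})s^{1/2}\Pb\div$ (Lemma \ref{boundedness-calT}); the full-range term $\int_0^\infty e^{(t+s)\Delta}\Pb\div\alpha\,ds$ is the one treated by the Hardy-space factorization against $\|\alpha\|_{T^{\infty,1}}$; and the leftover $\int_t^\infty e^{(t+s)\Delta}$ is a true remainder, but it too is estimated against $\|s^{1/2}\alpha\|_{T^{\infty,2}}$ (Lemma \ref{Lemma-errorterm}), not against $\|s\alpha\|_\infty$. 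To repair your proof you would have to adopt this (or an equivalent) decomposition and bring the $T^{\infty,2}_{1/2}$ control of $u\otimes v$ into play; as written, both of your key mapping claims fail or are unjustified.
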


\begin{proof}
We restrict ourselves to the case $T=\infty$. The same argument works otherwise. \\
\textbf{Step 1} (From linear to bilinear).
In a first step, one reduces the bilinear estimate to a linear estimate. We use the following fact, which is a simple consequence of H\"older's inequality:
\begin{align} \label{alpha-condition}
u,v \in (\calE_\infty)^n, \; \alpha:=u \otimes v 
\quad \Rightarrow \quad
\begin{cases} 
 \alpha \in T^{\infty,1}(\R^{n+1}_+;\C^n \otimes \C^n), \\
 s^{1/2}\alpha(s,\,.\,) \in T^{\infty,2}(\R^{n+1}_+;\C^n \otimes \C^n), \\
 s \alpha(s,\,.\,) \in L^{\infty}(\R^{n+1}_+;\C^n \otimes \C^n).
 \end{cases}
\end{align}
It thus suffices to show that for the linear operator $\calA$, defined by
\begin{equation}\label{def-A}
	\calA(\alpha)(t,\ldot) = \int_0^t e^{(t-s)\Delta} \Pb \div \alpha(s,\,.\,) \, ds,
\end{equation}
there exists a constant $C>0$ such that for all $\alpha$ satisfying the conditions in \eqref{alpha-condition},

\begin{align} \label{linear-est1}
	\norm{t^{1/2}\calA(\alpha)}_{L^{\infty}(\R^{n+1}_+;\C^n)} 
	& \leq C \norm{\alpha}_{T^{\infty,1}(\R^{n+1}_+;\C^n \otimes \C^n)} + C\norm{s\alpha(s,\,.\,)}_{L^{\infty}(\R^{n+1}_+;\C^n \otimes \C^n)}, \\ 
	\label{linear-est2}
	\norm{\calA(\alpha)}_{T^{\infty,2}(\R^{n+1}_+;\C^n)}
	 &\leq C \norm{\alpha}_{T^{\infty,1}(\R^{n+1}_+;\C^n \otimes \C^n)}
	 + C\norm{s^{1/2}\alpha(s,\,.\,)}_{T^{\infty,2}(\R^{n+1}_+;\C^n \otimes \C^n)}.
\end{align}
\textbf{Step 2} ($L^\infty$ estimate).\\
  The proof of \eqref{linear-est1} is the one found in \cite{KochTataru}.   Notice  that the argument only uses the polynomial bounds  on  the Oseen kernel $k_t(x)$ of $e^{t\Delta}\Pb$ (See e.g. \cite[Chapter 11]{Lemarie}) for $|\beta|=1$, 
\begin{equation}
\label{Oseen}
	\abs{ t^{\abs{\beta}/2}\partial_\beta k_t(x) } \leq C t^{-n/2}(1+t^{-1/2}\abs{x})^{-n-\abs{\beta}} 
	\qquad \forall \beta \in \N^n, \; \forall x \in \R^n, \forall t>0
\end{equation} 
and no other special properties on the corresponding operator $e^{t\Delta}\Pb$.
 We shall see later that such assumptions can be weakened.  \\

\textbf{Step 3} ($T^{\infty,2}$ estimate - New decomposition).\\
We split $\calA$ into three parts:
\begin{align*}
 \calA(\alpha)(t,\,.\,) &= \int_0^t e^{(t-s)\Delta} \Pb \div \alpha(s,\, .\,) \,ds \\
  & =\int_0^t e^{(t-s)\Delta} \Delta  (s\Delta)^{-1} (I-e^{2s\Delta}) s^{1/2} \Pb \div s^{1/2} \alpha(s,\,.\,)\,ds \\
  & \qquad + \int_0^{\infty} e^{(t+s)\Delta} \Pb \div \alpha(s,\,.\,)\,ds\\
  & \qquad -  \int_t^\infty e^{(t+s)\Delta} \Pb s^{-1/2} \div s^{1/2}\alpha(s,\,.\,)\,ds\\
  & =:\calA_1(\alpha)(t,\ldot)+ \calA_2(\alpha)(t,\ldot) + \calA_3(\alpha)(t,\ldot).
\end{align*}

\textbf{Step 3(i)} (Maximal regularity operator).
To treat  $\calA_1$, we use the fact that the maximal regularity operator 
\begin{align} \label{def-maxreg} \nonumber
 & \calM^+ : 	T^{\infty,2}(\R^{n+1}_+) \to T^{\infty,2}(\R^{n+1}_+), \\
  & (\calM^+ F)(t,\,.\,) := \int_0^t e^{(t-s)\Delta} \Delta F(s,\,.\,) \,ds,
\end{align}
is bounded. 
The result for $T^{2,2}(\R^{n+1}_+) = L^2(\R^{n+1}_+)$ was established by de Simon in \cite{deSimon}.
The extension to $T^{\infty,2}(\R^{n+1}_+)$ was implicit in \cite{KochTataru}, but not formulated this way. It  is an application of \cite[Theorem 3.2]{AMP}, taking $\beta=0$, $m=2$ and $L=-\Delta$, noting that the Gaussian bounds for the kernel of $t\Delta e^{t\Delta}$ yield the needed decay.  This extends to $\C^n$-valued functions $F$ straightforwardly.  

Next, for $s>0$, define $T_s:= (s\Delta)^{-1}(I-e^{2s\Delta}) s^{1/2}\Pb \div$. Observe that $T_s$ is bounded uniformly   from  $L^2(\R^n; \C^n \otimes\C^n)$ to $L^2(\R^n; \C^n)$, and that  standard Fourier computations show that $T_s$ is a convolution operator with  kernel $k_s$ satisfying  a pointwise estimate  of order $n+1$ at $\infty$, more precisely, 
\begin{align} \label{kernel-est-Ts}
	\abs{k_s(x)} \leq C s^{-n/2}(s^{-1/2}\abs{x})^{-n-1} \qquad \forall x \in \R^n, \forall s>0, \, \abs{x} \geq s^{1/2}.
\end{align}
We show in Lemma \ref{boundedness-calT} below, stated under weaker assumptions in form of $L^2$-$L^\infty$ off-diagonal estimates, that the operator $\calT$, defined by
\begin{align} \label{def-multop} \nonumber
	& \calT: T^{\infty,2}(\R^{n+1}_+;\C^n \otimes \C^n) \to T^{\infty,2}(\R^{n+1}_+;\C^n), \\
	& (\calT F)(s,\,.\,) := T_s (F(s,\,.\,)),
\end{align}
is bounded.
With the definitions in \eqref{def-maxreg} and \eqref{def-multop}, we then have $\calA_1(\alpha) = \calM^+\calT (s^{1/2}\alpha(s,\,.\,))$ and the boundedness of these operators imply
\begin{align*}
 \norm{\calA_1(\alpha)}_{T^{\infty,2}}
 &= \norm{\calM^+\calT (s^{1/2}\alpha(s,\,.\,))}_{T^{\infty,2}} \\
 	&\lesssim \norm{\calT (s^{1/2}\alpha(s,\,.\,))}_{T^{\infty,2}} \lesssim \norm{s^{1/2}\alpha(s,\,.\,)}_{T^{\infty,2}}.
\end{align*}

\textbf{Step 3(ii)} (Hardy space estimates).
This is the main new part of the proof.  We use in the following that the Leray projection $\Pb$ commutes with the Laplacian 
%that it is bounded on the Hardy space $H^1$, 
and the above bounds on the Oseen kernel to show that 
\begin{align} \label{def-singularint} \nonumber
 &\calA_{2} : T^{\infty,1}(\R^{n+1}_+;\C^n \otimes \C^n) \to T^{\infty,2}(\R^{n+1}_+;\C^n),\\
 &(\calA_{2} F)(t,\,.\,) := \int_0^\infty e^{(t+s)\Delta}\Pb\div F(s,\,.\,) \,ds,
\end{align}
is bounded. 
 We work via dualisation, and
 it is enough to show  that
 \begin{align} \label{def-A2-dual} \nonumber
 &\calA_{2}^\ast : T^{1,2}(\R^{n+1}_+;\C^n) \to T^{1,\infty}(\R^{n+1}_+;\C^n \otimes \C^n),\\
  &(\calA_{2}^\ast G)(s,\,.\,) = e^{s\Delta}  \int_0^{\infty} \nabla \Pb e^{t\Delta} G(t,\,.\,) \,dt,
\end{align}
is bounded. 
Indeed, if $G\in T^{1,2}$, identifying $F$ with the density of a parabolic Carleson measure,  
$$|\skp {\calA_{2}F, G}| =  |\skp {F, \calA_{2}^\ast G}| \le C \|F\|_{T^{\infty,1}}\|G\|_{T^{1,2}}$$
and using that $T^{\infty,2}$ is the dual of $T^{1,2}$ proves the claim. 
To see \eqref{def-A2-dual}, we factor $\calA_{2}^\ast$ through the Hardy space $H^1(\R^n; \C^n \otimes \C^n)$.
We know from classical Hardy space theory, that $H^1(\R^n)$ can either be defined via non-tangential maximal functions or via square functions (here in parabolic scaling instead of the more commonly used elliptic scaling). First,  the operator 
\begin{align} \label{sqfct-H1}
& \calS : T^{1,2}(\R^{n+1}_+;\C^n) \to H^1(\R^n; \C^n \otimes \C^n),\\
 & \calS G(\,.\,) =  \int_0^{\infty} \nabla \Pb e^{t\Delta} G(t,\,.\,) \,dt,
\end{align}
is bounded. This uses the polynomial decay of order $n+1$ at $\infty$ of the kernel of $\nabla \Pb e^{t\Delta}$ in \eqref{Oseen} (some weaker decay of non-pointwise type would suffice for this,  in fact). The precise calculations are given in \cite{FS} (cf. also \cite{CMS}).  Second,   again by \cite{FS},  we have for $h \in H^1(\R^n)$ that $(s,x) \mapsto e^{s\Delta} \,h(x) \in T^{1,\infty}$ and  $\norm{N(e^{s\Delta} \, h)}_{L^1(\R^n)} \lesssim \norm{h}_{H^1(\R^n)}$. The same holds componentwise for $\C^n \otimes \C^n$-valued functions. A combination of both estimates gives the expected result for $\calA_{2}^\ast$.\\

\textbf{Step 3(iii)} (Remainder term). The considered integral in $\calA_3$ is not singular in $s$ and  is an error term.    It suffices to show that
\begin{align} \label{def-errorterm} \nonumber
  &\calR : T^{\infty,2}(\R^{n+1}_+;\C^n \otimes \C^n) \to T^{\infty,2}(\R^{n+1}_+;\C^n),\\
  &(\calR F)(t,\,.\,):= \int_t^{\infty} e^{(t+s)\Delta} \Pb s^{-1/2} \div F(s,\,.\,) \,ds
\end{align}
is bounded as $\calA_{3}(\alpha)= \calR (s^{1/2} \alpha(s,\,.\,))$. This can be seen as a special case of \cite[Theorem 4.1 (2)]{AKMP}.  As  parts of this proof  refer to earlier arguments,  we give a self-contained proof for $\calR$ in Lemma \ref{Lemma-errorterm} below.
\end{proof}

\section{Technical results}

\begin{Lemma} \label{boundedness-calT}
Let $(T_s)_{s>0}$ be a  measurable  family of uniformly bounded operators in $L^2(\R^n)$, which satisfy $L^2$-$L^\infty$ off-diagonal estimates of the form 
\begin{align} \label{L2-Linfty-Ts}
 	\norm{\Eins_E T_s \Eins_{\tilde E}}_{L^2(\R^n) \to L^{\infty}(\R^n)}
 		\leq C s^{-\frac{n}{4}} \left(s^{-1/2}\dist(E,\tilde E)\right)^{-\frac{n}{2}-1}
\end{align}
for all $s>0$ and Borel sets $E,\tilde E \subseteq \R^n$ with $\dist(E,\tilde E) \geq s^{1/2}$. Then the operator $\calT$, defined by
\begin{align*} 
	& \calT: T^{\infty,2}(\R^{n+1}_+) \to T^{\infty,2}(\R^{n+1}_+), \\
	& (\calT F)(s,\,.\,) := T_s (F(s,\,.\,)),
\end{align*}
is bounded.
\end{Lemma}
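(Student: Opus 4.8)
The plan is to work straight from the definition of the $T^{\infty,2}$ norm. Fix a ball $B=B(x_0,\sqrt{t_0})$; it suffices to bound $t_0^{-n/2}\iint_{(0,t_0)\times B}|\calT F(s,y)|^2\,dy\,ds$ by $C\|F\|_{T^{\infty,2}}^2$ uniformly in $B$. The natural device is a Calder\'on--Zygmund type annular decomposition of the input in the spatial variable at the scale $\sqrt{t_0}$: write $F(s,\,.\,)=\sum_{j\ge 0}F_j(s,\,.\,)$ with $F_0=F\Eins_{2B}$ and $F_j=F\Eins_{C_j}$, where $C_j=2^{j+1}B\setminus 2^jB$ for $j\ge1$, and estimate each $\calT F_j$ separately, recombining by Minkowski's inequality in $L^2((0,t_0)\times B)$ (this is what makes the exchange of the infinite sum with the $L^2$ norm legitimate).

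For the local piece $F_0$ I would use only the uniform $L^2\to L^2$ bound: $\int_0^{t_0}\|T_sF_0(s,\,.\,)\|_{L^2(B)}^2\,ds\lesssim\int_0^{t_0}\int_{2B}|F|^2\le(4t_0)^{n/2}\|F\|_{T^{\infty,2}}^2$, the last inequality being the defining $T^{\infty,2}$ bound applied at the ball $B(x_0,\sqrt{4t_0})$. For the far pieces $j\ge1$, the support $C_j$ sits at distance $d_j\ge 2^{j-1}\sqrt{t_0}\ge \sqrt s$ from $B$ (since $s<t_0$), so hypothesis \eqref{L2-Linfty-Ts} applies and yields $\|\Eins_BT_sF_j(s,\,.\,)\|_{L^\infty}\lesssim s^{-n/4}(s^{-1/2}d_j)^{-n/2-1}\|F(s,\,.\,)\|_{L^2(C_j)}\lesssim 2^{-j(n/2+1)}\,s^{1/2}t_0^{-n/4-1/2}\|F(s,\,.\,)\|_{L^2(C_j)}$, using $s^{-1/2}d_j\gtrsim 2^j$. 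Multiplying by $|B|^{1/2}\sim t_0^{n/4}$, integrating over $s\in(0,t_0)$ (the extra $s^{1/2}$ is harmless, $\le t_0^{1/2}$), and then covering $2^{j+1}B$ by $\lesssim 2^{jn}$ balls of radius $\sqrt{t_0}$ to get $\int_0^{t_0}\int_{C_j}|F|^2\lesssim 2^{jn}t_0^{n/2}\|F\|_{T^{\infty,2}}^2$, one arrives at $\iint_{(0,t_0)\times B}|T_sF_j|^2\lesssim 2^{-2j}\,t_0^{n/2}\|F\|_{T^{\infty,2}}^2$.

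The crux of the argument is this last exponent bookkeeping: the decay $2^{-2j(n/2+1)}$ coming from the off-diagonal estimate must beat the volume growth $2^{jn}$ of the annuli, and it does so with room to spare precisely because the off-diagonal order $n/2+1$ is strictly larger than $n/2$, leaving a summable $\sum_j 2^{-2j}$. Summing the square roots over $j\ge1$ and adding the local term gives $\big(\iint_{(0,t_0)\times B}|\calT F|^2\big)^{1/2}\lesssim t_0^{n/4}\|F\|_{T^{\infty,2}}$, and taking the supremum over all balls $B$ completes the proof. The only genuinely delicate points are (i) checking that $\dist(B,C_j)\ge s^{1/2}$ throughout $s\in(0,t_0)$ so that \eqref{L2-Linfty-Ts} is applicable for every $j\ge1$ --- immediate since already $\dist(B,C_1)\ge\sqrt{t_0}$ --- and (ii) keeping the powers of $t_0$, $s$, and $2^j$ straight. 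No self-adjointness, kernel positivity, or semigroup structure of $(T_s)_{s>0}$ is used: only the uniform $L^2$-boundedness and the single $L^2$--$L^\infty$ off-diagonal bound.
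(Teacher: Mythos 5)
Your proof is correct and follows essentially the same route as the paper: the same annular decomposition of $F$ at scale $\sqrt{t_0}$, the local piece handled by uniform $L^2$ boundedness, the far pieces by H\"older (the $|B|^{1/2}\sim t_0^{n/4}$ factor) combined with the $L^2$--$L^\infty$ off-diagonal bound, and summation thanks to the net decay $2^{-j}$ after absorbing the annulus volume. The only difference is cosmetic bookkeeping: you cover $2^{j+1}B$ by $\lesssim 2^{jn}$ balls of radius $\sqrt{t_0}$, while the paper compares directly with the Carleson average over the enlarged ball $B(x,2^{j+1}\sqrt{t})$, which yields the same $2^{jn/2}$ loss.
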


\begin{Remark}  This statement obviously  extends to vector-valued functions. 
A straightforward calculation shows that the kernel estimates in \eqref{kernel-est-Ts} imply the $L^2$-$L^\infty$ off-diagonal estimates in \eqref{L2-Linfty-Ts}.  
\end{Remark}

\begin{proof}
The proof is a slight modification of \cite[Theorem 5.2]{HvNP}.
Let $F \in T^{\infty,2}(\R^{n+1}_+)$ and fix $(t,x) \in \R^{n+1}_+$. 
Define $F_0:=\Eins_{B(x,2\sqrt{t})}F$ and $F_j:=\Eins_{B(x,2^{j+1}\sqrt{t})\setminus B(x,2^j\sqrt{t})}F$ for $j \geq 1$. On the one hand, the uniform boundedness of $T_s$ in $L^2(\R^n)$ yields 
\[
\norm{T_sF_0(s,\,.\,)}_{L^2(B(x,\sqrt{t}))} \lesssim \norm{F(s,\,.\,)}_{L^2(B(x,2\sqrt{t}))}.
\]
On  the other hand, 
H\"older's inequality and \eqref{L2-Linfty-Ts} yield for $s<t$ and $j \geq 1$,
\begin{align*}
	& \norm{T_sF_j(s,\,.\,)}_{L^2(B(x,\sqrt{t}))}
		\lesssim t^{\frac{n}{4}} \norm{T_s F_j(s,\,.\,)}_{L^{\infty}(B(x,\sqrt{t}))}\\
		& \qquad \lesssim t^{\frac{n}{4}} s^{-\frac{n}{4}} \left(\frac{\sqrt{s}}{2^j\sqrt{t}}\right)^{\frac{n}{2}+1} \norm{F(s,\,.\,)}_{L^2(B(x,2^{j+1}\sqrt{t}))}
		\lesssim 2^{-j(\frac{n}{2}+1)} \norm{F(s,\,.\,)}_{L^2(B(x,2^{j+1}\sqrt{t}))}.
\end{align*}
Thus, 
\begin{align*}
	& \left(t^{-n/2} \int_0^t \norm{T_sF(s,\,.\,)}_{L^2(B(x,\sqrt{t}))}^2 \,ds\right)^{1/2} \\
	& \qquad \lesssim \sum_{j \geq 0} 2^{-j(\frac{n}{2}+1)} 2^{j\frac{n}{2}} \left((2^j\sqrt{t})^{-n} \int_0^t \norm{F(s,\,.\,)}_{L^2(B(x,2^{j+1}\sqrt{t}))} \,ds\right)^{1/2} 
	\lesssim \norm{F}_{T^{\infty,2}(\R^{n+1}_+)}.
\end{align*}
\end{proof}

\begin{Lemma} \label{Lemma-errorterm}
The operator $\calR$ defined in \eqref{def-errorterm} is bounded.
\end{Lemma}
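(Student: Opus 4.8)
The plan is to estimate the operator $\calR$ directly in $T^{\infty,2}$ by exploiting that the integration variable $s$ runs over $(t,\infty)$, so that the semigroup factor $e^{(t+s)\Delta}$ always carries a favourable time scale $t+s \geq s$, and that the extra factor $s^{-1/2}$ combined with $\div$ produces operators with good decay. Concretely, I would write, for $s>t$,
\[
  e^{(t+s)\Delta}\Pb\, s^{-1/2}\div = s^{-1/2}\, (s\Delta)^{1/2} \cdot (s\Delta)^{-1/2} e^{(t+s)\Delta}\Pb\,\div,
\]
or more simply just keep $G_s := e^{(t+s)\Delta}\Pb s^{-1/2}\div$ and observe, via the Oseen bounds \eqref{Oseen} with $|\beta|=1$, that $G_s$ is a convolution operator whose kernel at scale $(t+s)$ has size $\lesssim (t+s)^{-n/2} s^{-1/2}(\ldots)^{-n-1}$; in particular $\|\Eins_E G_s \Eins_{\tilde E}\|_{L^2\to L^2}$ and the relevant $L^2$–$L^\infty$ norms decay polynomially in $\dist(E,\tilde E)/\sqrt{s}$ with an exponent $\ge \tfrac n2+1$. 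The point is that this is the same structure as in Lemma \ref{boundedness-calT}, except that one now also has to integrate in $s$ over a range that is unbounded above, so the uniform bound must be supplemented by an $L^1_s$-type gain.

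The key steps, in order, would be: \textbf{(1)} Fix $(t,x)\in\R^{n+1}_+$ and decompose the input $F$ into annular pieces $F_j$ adapted to the ball $B(x,2^j\sqrt t)$, exactly as in the proof of Lemma \ref{boundedness-calT}; simultaneously split the $s$-integral dyadically, $s\in(2^k t, 2^{k+1}t)$ for $k\ge 0$. \textbf{(2)} On $B(x,\sqrt t)$, bound $\|(\calR F)(t,\cdot)\|_{L^2(B(x,\sqrt t))}$ by $\sum_{j,k}$ of the contribution of $F_j$ on the time slab of index $k$, using the $L^2$–$L^2$ uniform bound for the close terms ($2^j \lesssim 2^{k/2}$, say) together with the extra $s^{-1/2}$ to get an integrable factor in $s$, and the $L^2$–$L^\infty$ off-diagonal decay (Hölder to pass from $L^\infty(B(x,\sqrt t))$ to $L^2(B(x,\sqrt t))$ costs $t^{n/4}$, as in Lemma \ref{boundedness-calT}) for the far terms, picking up $2^{-j(\frac n2+1)}$ times powers of $2^{k}$. \textbf{(3)} Sum the resulting double geometric series in $j$ and $k$; the combination of the $s^{-1/2}\,ds$ factor over $s\sim 2^k t$ (which contributes $\sim (2^k t)^{1/2}$, i.e. gives room), the volume normalisations $(2^j\sqrt t)^{-n}$ versus $t^{-n/2}$, and the off-diagonal exponent $\frac n2+1>\frac n2$ must conspire to a convergent sum bounded by $\|F\|_{T^{\infty,2}}$. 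Finally \textbf{(4)} take the supremum over $(t,x)$ to conclude $\|\calR F\|_{T^{\infty,2}}\lesssim \|F\|_{T^{\infty,2}}$.

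The main obstacle I anticipate is bookkeeping the two independent dyadic sums — spatial annuli indexed by $j$ and temporal slabs indexed by $k$ — in such a way that every regime (near/far in space, and small/large $s$) closes with a strictly summable factor. In particular one has to check that the "diagonal" regime $2^j\sim 2^{k/2}$ does not lose the gain: there the off-diagonal decay is unavailable, so the whole gain must come from the $s^{-1/2}$ and the relation between the slab length and the ball radius, i.e. from the fact that $\int_{2^k t}^{2^{k+1}t} s^{-1/2}\,ds \sim (2^k t)^{1/2}$ while the $L^2$-averaging normalisation supplies $(2^j\sqrt t)^{-n/2}(2^{j+1}\sqrt t)^{n/2}=2^{n/2}$, leaving a net factor that must be controlled by choosing to measure $F_j$'s norm against its own Carleson scale $2^j\sqrt t$. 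Once the exponents are tracked honestly, this is the same mechanism as in \cite[Theorem 4.1(2)]{AKMP}, and I would expect the proof to be a few lines longer than that of Lemma \ref{boundedness-calT} with no genuinely new idea — just a more careful double summation and the observation that $\calR$ is "non-singular in $s$", so no cancellation is needed, only decay.
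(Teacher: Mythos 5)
Your toolkit (dyadic slabs $s\in(2^kt,2^{k+1}t)$, spatial annuli, $L^2$--$L^\infty$ off-diagonal estimates, H\"older at cost $t^{n/4}$, geometric summation) is the right one and, for the spatially far pieces, coincides with what the paper does. The genuine gap is in your ``close'' regime $2^j\lesssim 2^{k/2}$. There the slab $(2^kt,2^{k+1}t)$ has temporal length far exceeding the square of the spatial radius of your annulus, so the only way the $T^{\infty,2}$ norm of $F$ controls $\int_{2^kt}^{2^{k+1}t}\|F_j(s,\cdot)\|_2^2\,ds$ is through a parabolic box at the \emph{temporal} scale $2^kt$, at cost $(2^kt)^{n/4}\|F\|_{T^{\infty,2}}$; your remark about measuring $F_j$ against ``its own Carleson scale $2^j\sqrt t$'' misses this, since the relevant scale is $\max(4^jt,\,2^kt)$. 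Against that cost, the uniform $L^2$--$L^2$ bound $\|K(t,s)\|_{L^2\to L^2}\lesssim s^{-1/2}(t+s)^{-1/2}$ plus Cauchy--Schwarz in $s$ only produces $(2^kt)^{-1/2}$, leaving a factor $(2^kt)^{n/4-1/2}$ whose sum over $k$ diverges for every $n\ge 2$: the extra $s^{-1/2}$ you are counting on has already been spent. What saves this regime is not the $L^2$--$L^2$ bound but the $L^2$--$L^\infty$ smoothing of $K(t,s)$ at scale $t+s$, namely $\|K(t,s)\|_{L^2\to L^\infty}\lesssim s^{-1/2}(t+s)^{-1/2-n/4}$, used together with H\"older on the ball $B(x_0,\sqrt r)$ (cost $r^{n/4}$); this supplies the missing $(2^kt)^{-n/4}$, gives a net factor $r^{n/4}(2^kt)^{-1/2}$, and the sums then close with at most an integrable logarithm in $t$. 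This is exactly how the paper treats the temporally far piece $I_j^{(1)}$ (where it notes that only $L^2$--$L^\infty$ \emph{boundedness}, not off-diagonal decay, is available), after organising the decomposition into parabolic boxes $B_j=(0,2^jr)\times B(x_0,\sqrt{2^jr})$ so that this regime is isolated cleanly rather than hidden inside a spatial-annuli-only decomposition.

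A second, smaller omission: the paper first proves $L^2(\R^{n+1}_+)$ boundedness of $\calR$ by Schur's lemma with the weight $t^\beta$, $\beta\in(-\tfrac12,0)$; this both gives meaning to the defining integral and disposes of the genuinely local part (the paper's $j\le 2$) in one stroke. Your plan has no substitute for this step. Your dyadic-in-$s$ splitting can in fact replace it (each slab contributes a factor $2^{-k/2}$ after a Fubini argument), but this needs to be said explicitly, because a naive Cauchy--Schwarz over $s\in(t,\infty)$ in the diagonal region produces a logarithmic loss. So the architecture of your proposal is close to the paper's, but as written the key regime $s\gg(\text{spatial scale})^2$ is handled with an insufficient bound, and that is a genuine gap rather than bookkeeping.
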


\begin{proof}
We write
\[
	(\calR F)(t,\,.\,) = \int_t^\infty K(t,s)F(s,\,.\,)\,ds,
\] 
with $K(t,s):=e^{(t+s)\Delta} \Pb s^{-1/2} \div$ for $s,t>0$. We first show the boundedness of $\calR$ on $L^2(\R^{n+1}_+)$ and the proof gives a meaning to this integral.
This follows from the easy bound   $\norm{K(t,s)}_{L^2 \to L^2} \leq C s^{-1/2}(t+s)^{-1/2}$. Indeed, pick some $\beta \in (-\frac{1}{2},0)$, set $p(t):=t^{\beta}$ and observe that  
$k(t,s) := \Eins_{(t,\infty)}(s) \norm{K(t,s)}_{L^2(\R^n) \to L^2(\R^n)}$ satisfies
\begin{align*}
 \int_0^{\infty} {k(t,s)} p(t) \,dt &\lesssim  \int_0^s s^{-1/2} t^{-1/2} t^{\beta} \,dt \lesssim s^{\beta}=p(s), \, \forall s>0,\\
  \int_0^{\infty} {k(t,s)} p(s) \,ds &\lesssim \int_t^{\infty} s^{-1/2} s^{-1/2} s^{\beta} \,ds \lesssim t^{\beta}=p(t), \, \forall t>0.
\end{align*}
This allows to apply Schur's lemma and the $L^2$ boundedness is proved.

\

Next, we show that $\calR$ extends to a bounded operator on $T^{\infty,2}(\R^{n+1}_+)$. Note that for all $s,t>0$, the operator $K(t,s)$ is an  integral operator of convolution with  $k_{t,s}$, which satisfies
\begin{align} \label{kernel-kts}
	\abs{k_{t,s}(x)} \leq C s^{-1/2} (t+s)^{-1/2} (t+s)^{-\frac{n}{2}}\left(1+(t+s)^{-1/2}\abs{x}\right)^{-n-1} \qquad \forall x \in \R^n, \forall s,t>0.
\end{align}
These estimates imply $L^2$-$L^\infty$ off-diagonal estimates of the form
\begin{align} \label{L2-Linfty-est-M-}
	\norm{\Eins_E K(t,s) \Eins_{\tilde E}}_{L^2 \to L^{\infty}}
	\leq C s^{-1/2} (t+s)^{-1/2} (t+s)^{-\frac{n}{4}}\left(1+(t+s)^{-1/2}\dist(E,\tilde E)\right)^{-\frac{n}{2}-1}
\end{align}
for all Borel sets $E,\tilde E \subseteq \R^n$ and $s,t>0$. 
Let $F \in T^{\infty,2}(\R^{n+1}_+)$ and fix $(r,x_0) \in \R^{n+1}_+$. Define
$B_j:=(0,2^j r) \times B(x_0,\sqrt{2^jr})$ for $j \geq 0$ and $C_j:=B_j \setminus B_{j-1}$ for $j \geq 1$. 
Then set $F_0:=\Eins_{B_0} F$ and $F_{j}:=\Eins_{C_j}F$ for $j \geq 1$. Using Minkowski's inequality, we have
\begin{align*}
	&\left(r^{-n/2}\int_0^r \norm{(\calR F)(t,\,.\,)}_{L^2(B(x_0,\sqrt{r}))}^2 \,dt\right)^{1/2}\\ 
	& \qquad \lesssim \sum_{j \geq 0} \left(r^{-n/2}\int_0^r \norm{(\calR F_j)(t,\,.\,)}_{L^2(B(x_0,\sqrt{r}))}^2 \,dt\right)^{1/2} =:\sum_{j \geq 0} I_j.
\end{align*}
For $j\le 2$, the boundedness of $\calR$ on $L^2(\R^{n+1}_+)$ yields the desired estimate $\norm{I_j} \lesssim \norm{F}_{T^{\infty,2}(\R^{n+1}_+)}$. 
For $j \geq 3$, split $C_j=(0,2^{j-1}r) \times (B(x_0,\sqrt{2^jr})\setminus B(x_0,\sqrt{2^{j-1}r})) \cup (2^{j-1}r,2^jr) \times B(x_0,\sqrt{2^jr}) =:C_j^{(0)} \cup C_j^{(1)}$. Denote  $F_{j}^{(0)}:=\Eins_{C_j^{(0)}}F$ and $F_{j}^{(1)}:=\Eins_{C_j^{(1)}}F$, and $I_j^{(0)}, I_j^{(1)}$ correspondingly. 
For $I_j^{(0)}$, we split the integral in $s$ and use H\"older's inequality to obtain
\begin{align} \label{est-Ij}
	I_j^{(0)} \lesssim \sum_{k \geq 0} \left(r^{-n/2} \int_0^r \int_{2^kt}^{2^{k+1}t} (2^k t) \norm{K(t,s)F_j^{(0)}(s,\,.\,)}_{L^2(B(x_0,\sqrt{r}))}^2 \,dsdt\right)^{1/2}.
\end{align}
Now observe that for $j \geq 3$, $k \geq 0$, $t \in (0,r)$ and $s \in (2^kt,2^{k+1}t)$, H\"older's inequality and \eqref{L2-Linfty-est-M-} yield for any  $\delta \in (0,1]$
\begin{align*}
	& \|K(t,s) F_j^{(0)}(s,\,.\,)\|_{L^2(B(x_0,\sqrt{r}))}
		 \lesssim r^{n/4} \|K(t,s) F_j^{(0)}(s,\,.\,)\|_{L^\infty(B(x_0,\sqrt{r}))} \\
		& \qquad  \lesssim r^{n/4} s^{-1/2} (t+s)^{-1/2} (t+s)^{-n/4} \left(1+\frac{\sqrt{2^{j-1}r}-\sqrt r}{(t+s)^{1/2}}\right)^{-\frac{n}{2}-\delta} \|F_j(s,\,.\,)\|_{L^2} \\
		& \qquad \lesssim (2^j)^{-\frac{n}{4}-\frac{\delta}{2}} r^{-\delta/2} (2^kt)^{-1+\delta/2} \|F_j(s,\,.\,)\|_{L^2}.
\end{align*}
Inserting this into \eqref{est-Ij}, interchanging the order of integration and choosing $\delta <1$ finally gives
\begin{align*}
 \sum_{j \geq 1} I_j^{(0)} \lesssim \sum_{j \geq 1} \sum_{k \geq 0} 2^{-j\frac{\delta}{2}} 2^{-k(\frac{1}{2}-\frac{\delta}{2})}  \left((2^jr)^{-n/2}\int_0^{2^jr} \|F_j(s,\,.\,)\|_{L^2}^2 \,ds\right)^{1/2} 
 \lesssim \|F\|_{T^{\infty,2}(\R^{n+1}_+)}.
\end{align*}
For $I_j^{(1)}$, we can only use $L^2$-$L^\infty$ boundedness for $K(t,s)$ instead of off-diagonal estimates. For $s \in (2^{j-1}r,2^jr)$ and $t\in (0,r)$, one obtains
\begin{align*}
	 \|K(t,s) F_j^{(1)}(s,\,.\,)\|_{L^2(B(x_0,\sqrt{r}))}
		 &\lesssim r^{n/4} \|K(t,s) F_j^{(1)}(s,\,.\,)\|_{L^\infty(B(x_0,\sqrt{r}))} \\
		&   \lesssim r^{n/4} s^{-1/2} (t+s)^{-1/2} (t+s)^{-n/4} \|F_j(s,\,.\,)\|_{L^2} \\
		 & \lesssim 2^{-jn/4} (2^jr)^{-1}  \|F_j(s,\,.\,)\|_{L^2}.
\end{align*}
Plugging this into $I_j^{(1)}$ then gives
\begin{align*}
	I_j^{(1)} 
	&\lesssim  \left(r^{-n/2} \int_0^r \int_{2^{j-1}r}^{2^jr} (2^j r) \|K(t,s)F_j^{(1)}(s,\,.\,)\|_{L^2(B(x_0,\sqrt{r}))}^2 \,dsdt\right)^{1/2}\\
	&\lesssim (2^jr)^{-1/2} r^{1/2} \left((2^jr)^{-n/2}  \int_0^{2^jr} \|F_j(s,\,.\,)\|_{L^2}^2 \,ds\right)^{1/2}
	\lesssim 2^{-j/2} \|F\|_{T^{\infty,2}(\R^{n+1}_+)}.
\end{align*}
Summing over $j$ gives the assertion.
\end{proof}

\section{Comments}
\label{sec:comments}

Let us temporarily denote by $T^{\infty,2}_{1/2}(\R^{n+1}_+)$ the weighted tent space defined by $F \in T^{\infty,2}_{1/2}(\R^{n+1}_+)$ if and only if $s^{1/2}F(s,\,.\,) \in T^{\infty,2}(\R^{n+1}_+)$. Respectively for $T^{2,2}_{1/2}(\R^{n+1}_+)$.\\

The first comment is that the  $T^{\infty,2}_{1/2}$ estimate for $\alpha$ is not used in \cite{KochTataru}.\\

The second comment is  that our proof is non local in time. By this, we mean that we need to know $\alpha= u\otimes v$ on the full time interval $[0,T]$ to get estimates for $B(u,v)$  at all smaller times $t$.  In contrast, the proof in \cite{KochTataru} is local in time: bounds for $u,v$ on the time interval $[0,t]$ suffice to get bounds at time $t$ for $B(u,v)$. \\

The third comment is on the optimality of the estimate in \eqref{linear-est2}, which could be related to the second comment. 
We have seen in Section \ref{section:newProof} that both $\calA_1$ and $\calA_3$ are bounded operators from $T^{\infty,2}_{1/2}$ to $T^{\infty,2}$. It is thus a natural question whether the same holds for $\calA_{2}$ as it would eliminate the $T^{\infty,1}$ term in the right hand side of \eqref{linear-est2}.
We show that this is not the case. It is therefore necessary to use a different argument for $\calA_2$, as   is done in  Step 3(ii) above. In  \cite{KochTataru}, this operator does not arise.

\begin{Prop}
The operator $\calA_2$ is neither bounded as an operator from $T^{2,2}_{1/2}(\R^{n+1}_+;\C^n \otimes \C^n)$ to $T^{2,2}(\R^{n+1}_+;\C^n )$, nor from $T^{\infty,2}_{1/2}(\R^{n+1}_+;\C^n \otimes \C^n)$ to $T^{\infty,2}(\R^{n+1}_+;\C^n)$.
\end{Prop}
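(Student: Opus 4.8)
The plan is to disprove both assertions with a single explicit family of test functions, designed so that $\calA_2$ is forced to ``see'' the behaviour of its argument as $s\to 0$, where $e^{s\Delta}$ provides no smoothing and where the weight $s^{1/2}$ built into $T^{2,2}_{1/2}$ and $T^{\infty,2}_{1/2}$ degenerates. Fix a nonzero $\widehat f\in C_c^\infty(\R^n)$, $\widehat f\ge 0$, supported in the sector $\{\xi:\tfrac12\le|\xi|\le 2,\ c_0\le|\xi_1|/|\xi|\le 1-c_0\}$ for a small $c_0>0$, and set $v:=e_1\otimes e_1\in\C^n\otimes\C^n$. For $\eta\in(0,1)$ define $F_\eta(s,y):= s^{-1}\Eins_{(\eta,1)}(s)\,f(y)\,v$. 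Since $\Pb\div$ acts only in $y$, one has $(\calA_2 F_\eta)(t,\cdot)=\bigl(\int_\eta^1 s^{-1}e^{(t+s)\Delta}\,ds\bigr)G$ with $G:=\Pb\div(fv)\in\calS(\R^n;\C^n)$, i.e. on the Fourier side
\[
\widehat{\calA_2 F_\eta}(t,\xi)=\Psi_t(|\xi|^2)\,\widehat G(\xi),\qquad \Psi_t(a):=\int_\eta^1 s^{-1}e^{-(t+s)a}\,ds .
\]
Here $\widehat G(\xi)=M(\xi)\widehat f(\xi)v$, where $M(\xi)$ is the degree-one homogeneous symbol of $\Pb\div$; a direct computation gives $|M(\xi)v|^2=\xi_1^2\bigl(1-\xi_1^2/|\xi|^2\bigr)\gtrsim 1$ on the chosen sector, so $\widehat G$ is supported in the fixed annulus $\{\tfrac12\le|\xi|\le 2\}$ and $\|G\|_{L^2}\gtrsim\|f\|_{L^2}>0$.

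First I would settle the $T^{2,2}_{1/2}\to T^{2,2}$ case, which is purely a Plancherel computation. On the one hand $\|F_\eta\|_{T^{2,2}_{1/2}}^2=\iint_{\R^{n+1}_+}s\,|F_\eta|^2 = \|f\|_{L^2}^2\,|v|^2\int_\eta^1 s^{-1}\,ds = c\,\log(1/\eta)$. On the other hand, for $t\le\tfrac12$ and $a\le 4$ (which covers $a=|\xi|^2$ on $\supp\widehat G$) the elementary bound $\Psi_t(a)\ge e^{-5/2}\int_\eta^{1/8}s^{-1}\,ds\gtrsim\log(1/\eta)$ holds for $\eta$ small; hence $\|(\calA_2 F_\eta)(t,\cdot)\|_{L^2}\gtrsim\log(1/\eta)\,\|G\|_{L^2}$ for all $t\le\tfrac12$, and integrating, $\|\calA_2 F_\eta\|_{T^{2,2}}^2\ge\int_0^{1/2}\|(\calA_2 F_\eta)(t,\cdot)\|_{L^2}^2\,dt\gtrsim(\log(1/\eta))^2\|G\|_{L^2}^2$. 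Thus $\|\calA_2 F_\eta\|_{T^{2,2}}/\|F_\eta\|_{T^{2,2}_{1/2}}\gtrsim(\log(1/\eta))^{1/2}\to\infty$ as $\eta\to 0$, ruling out boundedness of $\calA_2\colon T^{2,2}_{1/2}\to T^{2,2}$.

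For the $T^{\infty,2}_{1/2}\to T^{\infty,2}$ case I would use the same $F_\eta$. The upper bound is again easy: since $s^{1/2}F_\eta(s,\cdot)=s^{-1/2}\Eins_{(\eta,1)}(s)f(\cdot)v$, for every Carleson box one gets $r^{-n/2}\int_0^r\!\int_{B(x,\sqrt r)}|s^{1/2}F_\eta|^2\le\log(1/\eta)\,|v|^2\,\sup_{x,r}r^{-n/2}\int_{B(x,\sqrt r)}|f|^2$, and the last supremum is a finite constant depending only on $f$; hence $\|F_\eta\|_{T^{\infty,2}_{1/2}}^2\lesssim_f\log(1/\eta)$. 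It then suffices to produce one Carleson box $(0,r_0)\times B(0,\sqrt{r_0})$, with $r_0$ independent of $\eta$, on which $(\calA_2 F_\eta)(t,\cdot)$ retains a definite fraction of its $L^2$-mass for all $t\le\tfrac12$. To that end I would note that on the fixed annulus $\supp\widehat G$ one has, uniformly in $t\le\tfrac12$ and $\eta$ small, $c\,\log(1/\eta)\le\Psi_t(|\xi|^2)\le\log(1/\eta)$ together with $|\partial_\xi^\gamma\Psi_t(|\xi|^2)|\lesssim_\gamma\log(1/\eta)$ (differentiating under the integral sign, using $|t+s|\le\tfrac32$). Cutting off with a fixed $\chi\in C_c^\infty$ that is $1$ on $\supp\widehat G$, the map $g\mapsto(\log(1/\eta))^{-1}\Psi_t(D^2)\chi(D)g$ is convolution by a kernel $\kappa_{t,\eta}$ whose Schwartz seminorms are bounded uniformly in $t\le\tfrac12$ and $\eta$, and $(\log(1/\eta))^{-1}(\calA_2 F_\eta)(t,\cdot)=\kappa_{t,\eta}*G$. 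Since $G$ is a fixed Schwartz function and $\|\kappa_{t,\eta}*G\|_{L^2}\gtrsim\|G\|_{L^2}$ by the pointwise lower bound on $\Psi_t$, choosing $r_0$ large enough makes $\int_{B(0,\sqrt{r_0})}|\kappa_{t,\eta}*G|^2\ge\tfrac12\|\kappa_{t,\eta}*G\|_{L^2}^2\gtrsim\|G\|_{L^2}^2$ for all $t\le\tfrac12$, $\eta$ small. Testing the $T^{\infty,2}$ norm against $(0,r_0)\times B(0,\sqrt{r_0})$ gives $\|\calA_2 F_\eta\|_{T^{\infty,2}}^2\gtrsim r_0^{-n/2}\int_0^{1/2}\int_{B(0,\sqrt{r_0})}|(\calA_2 F_\eta)(t,y)|^2\,dy\,dt\gtrsim(\log(1/\eta))^2\|G\|_{L^2}^2$, and comparing with $\|F_\eta\|_{T^{\infty,2}_{1/2}}^2\lesssim\log(1/\eta)$ finishes the proof as $\eta\to0$.

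The only genuinely delicate point is this uniform-in-$\eta$ localization: one must verify that after dividing out the scalar factor $\log(1/\eta)$, the multipliers $\Psi_t(|\xi|^2)\chi(\xi)$ have $C^k$ bounds on the fixed frequency annulus that are uniform in $\eta$ and $t\le\tfrac12$, so that the output $(\calA_2 F_\eta)(t,\cdot)$ does not spread out as $\eta\to0$; this is exactly what forces the $\log(1/\eta)$ gain to survive at a fixed spatial scale. Everything else — the Fourier identity for $\calA_2 F_\eta$, the ellipticity computation $|M(\xi)v|^2=\xi_1^2(1-\xi_1^2/|\xi|^2)$, and the two one-dimensional estimates for $\Psi_t$ — is routine.
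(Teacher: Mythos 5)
Your construction is correct, and it takes a genuinely different route from the paper. The paper argues by duality: it computes $\calA_2^\ast$ and exhibits a single $G$, supported in the time slab $(1,2)$, for which $s^{-1/2}(\calA_2^\ast G)(s,\cdot)=e^{s\Delta}\nabla(-\Delta)^{-1/2}(s(-\Delta))^{-1/2}\Pb(e^{\Delta}-e^{2\Delta})u$ converges as $s\to0$ to a fixed nonzero $L^2$ function, so the weighted norm is infinite because of the borderline $\int_0^1 ds/s$ divergence; the $T^{\infty,2}$ case is then immediate by restricting to $L^2(B)$ on one fixed ball. You instead keep $\calA_2$ itself and test it on $F_\eta=s^{-1}\Eins_{(\eta,1)}(s)f\,e_1\otimes e_1$, which is essentially a truncation of the dual extremizer of the paper's construction, and you show the operator norm on truncations is at least of order $(\log(1/\eta))^{1/2}$; the same $ds/s$ borderline drives both proofs. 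Your $T^{2,2}$ case is a clean Plancherel computation, and your ellipticity check $|M(\xi)v|^2=\xi_1^2(1-\xi_1^2/|\xi|^2)$ together with the elementary bounds on $\Psi_t$ is fine. For the $T^{\infty,2}$ case you correctly identify the one extra point your route requires: after dividing by $\log(1/\eta)$, the multipliers $\Psi_t(|\xi|^2)\chi(\xi)$ are supported in a fixed compact set with $C^k$ bounds uniform in $\eta$ and $t\le 1/2$, so $(\calA_2F_\eta)(t,\cdot)$ does not spread spatially and a fixed Carleson box $(0,r_0)\times B(0,\sqrt{r_0})$ retains a definite fraction of its $L^2$ mass uniformly; this localization step is what replaces the paper's fixed-ball restriction (which the paper gets for free since its quantity is already infinite), and your argument for it (uniform Schwartz seminorms of $\kappa_{t,\eta}$, hence uniformly small tails, combined with the Plancherel lower bound) closes correctly. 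What each approach buys: the paper's duality argument is shorter and produces one function with infinite norm, while yours is quantitative and self-contained, avoiding the $T^{\infty,1}$--$T^{1,\infty}$ Carleson duality, at the cost of the explicit Fourier and localization work. Two minor remarks: your sector condition implicitly requires $n\ge 2$ (otherwise $\Pb=0$ and $\calA_2$ is trivial), which is also implicit in the paper's choice of $u$; and it would be worth one sentence confirming that $\|F_\eta\|_{T^{\infty,2}_{1/2}}$ is estimated with the $s$-integration restricted to $(\eta,\min(1,t))$, as you in effect do.
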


We adapt the argument of  \cite[Theorem 1.5]{AuscherAxelsson}.

\begin{proof}
We first show the result for $T^{2,2}$. 
We work with the dual operator $\calA_2^\ast$ defined in \eqref{def-A2-dual} and show that  
 \begin{align*}
G\mapsto   s^{-1/2}(\calA_{2}^\ast  G)(s,\,.\,) = s^{-1/2} e^{s\Delta}  \int_0^{\infty} \nabla \Pb e^{t\Delta} G(t,\,.\,) \,dt
\end{align*}
is not bounded from  $L^2(\R^{n+1}_+;\C^n)=T^{2,2}(\R^{n+1}_+;\C^n)$ to $L^2(\R^{n+1}_+;\C^n\otimes \C^n)$ .\\
There exists $u \in L^2(\R^n; \C^n)$ with $\nabla (-\Delta)^{-1/2} (-\Delta)^{-1/2}\Pb (e^{\Delta}-e^{2\Delta})u \neq 0$ in  $  L^2(\R^n;\C^n\otimes \C^n)$. Define $G(t,\,.\,)=u$ for $t \in (1,2)$, and $G(t,\,.\,)=0$ otherwise. Clearly $G\in L^{2}(\R^{n+1}_+;\C^n)$.  Then, for $s<1$,
\begin{align} \label{repr-A_2} \nonumber
	s^{-1/2}(\calA_{2}^\ast  G)(s,\,.\,)
	&= e^{s\Delta} \nabla (-\Delta)^{-1/2} (s(-\Delta))^{-1/2}  \int_1^2 (-\Delta)\Pb e^{t\Delta} u \,dt \\
	&= e^{s\Delta}\nabla (-\Delta)^{-1/2} (s(-\Delta))^{-1/2} \Pb (e^{\Delta} - e^{2\Delta}) u,
\end{align}
and
\begin{align*}
	\norm{s^{-1/2}(\calA_{2}^\ast  G)(s,\,.\,)}_{L^2(\R^{n+1}_+)}^2
		\geq \int_0^1 \norm{e^{s\Delta}\nabla (-\Delta)^{-1/2} (-\Delta)^{-1/2} \Pb (e^{\Delta} - e^{2\Delta}) u}_2^2 \,\frac{ds}{s} = \infty,
\end{align*}
as $e^{s\Delta} \to I$ for $s \to 0$. \\
For the result on $T^{\infty,2}$, we argue similarly. There is some ball $B=B(x,1)$  in $\R^n$ 
such that  $\nabla (-\Delta)^{-1/2} (-\Delta)^{-1/2}\Pb (e^{\Delta}-e^{2\Delta})u \neq 0$ in $L^2(B;\C^n\otimes \C^n)$. 
Let $G$ be defined as above.  Then $G \in T^{\infty,2}(\R^{n+1}_+;\C^n)$, since the Carleson norm of $G$ can be restricted to balls of radius larger than $1$ by definition of $G$ and
\begin{align*}
	\norm{G}_{T^{\infty,2}}^2 
	= \sup_{x_0 \in \R^n} \sup_{r>1} r^{-n/2} \int_0^r \int_{B(x_0,\sqrt{r})} \abs{G(t,x)}^2 \,dxdt 
	 \leq \int_1^2 \int_{\R^n} \abs{u(x)}^2 \,dxdt = \norm{u}_2^2.
\end{align*}
Now, using again \eqref{repr-A_2}, we get as above
\begin{align*}
 	\norm{s^{-1/2}(\calA_{2}^\ast  G)(s,\,.\,)}_{T^{\infty,2}}^2
 	\geq \int_0^1 \norm{e^{s\Delta}\nabla (-\Delta)^{-1/2} (-\Delta)^{-1/2} \Pb (e^{\Delta} - e^{2\Delta}) u}_{L^2(B)}^2 \,\frac{ds}{s} = \infty.
\end{align*}
\end{proof}

\section{A model case}
\label{sec:modelcase}

We illustrate that we do not use self-adjointness and pointwise bounds by considering a model case. 
 See also \cite{IN} for other models of similar type.  

\

Let $A \in L^\infty(\R^n;\calL(\R^n))$ with $\Re(A(x)) \geq \kappa I >0$ for a.e. $x \in \R^n$. Let $L=-\div (A \nabla)$. 
Consider the equation
\begin{align} \label{SLPE}
\left\{ \begin{array}{rl}
\partial_{t}u(t,x) + L u(t,x)  - \div_{x} f(u^2(t,x))   \!\! &= \; 0,   \\
u(0,\,.\,) \!\! &= \;u_0, 
 \end{array} \right.
\end{align}
where we assume that  $f:\R \to \R^n$  is globally Lipschitz continuous, and satisfies
\[
	|f(x)| \leq C |x|, \quad x \in\R.  
\]
As before, we want to find mild solutions, i.e.,  solutions $u:\R^{n+1}_{+}\to \R$ of the integral equation 
\begin{align} \label{eq:inteq}
 u(t,\ldot) =  e^{-tL}u_0 - \int_0^t e^{-(t-s)L}   \div_x  f(u^2(s,\ldot)) \,ds.
\end{align}
Here too, we put appropriate  assumptions  on $u_{0}$ so as to construct mild solutions with   Carleson  type control. 
Again, using the Picard contraction principle, matters reduce to showing that the operator $B$, defined by
\begin{align} \label{Def-BilinearOpNew}
	B(u)(t,\ldot):=\int_0^t e^{-(t-s)L}  \div_x  f(u^2(s,\ldot)) \,ds,
\end{align}
is bounded on an appropriately defined admissible path space to which the free evolution $e^{-tL} u_{0}$ belongs.  

Replacing $f(u^2) $ by an independent function $F$, there is a corresponding linear problem
 \begin{align} \label{lineq}
 	\left\{ \begin{array}{rl}
\partial_{t}u(t,x) + L u(t,x)  \!\! &= \; \div_{x} F(t,x),   \\
u(0,\,.\,) \!\! &= \;u_0.
 \end{array} \right.
 \end{align}
The differential equation is understood in  the sense of distributions: $u$ is a weak solution,  meaning that  $u$ and $\nabla_{x}u$ are locally square integrable and the differential equation is understood against test functions on $\R^{n+1}_{+}$ 
$$
\iint \big(-u(t,x) \partial_{t}\varphi(t,x) + A(x)\nabla_{x}u(t,x)\cdot \nabla_{x} \varphi(t,x)\big) \, dxdt= -\iint F(t,x)\cdot \nabla_{x} \varphi(t,x)\, dxdt.
$$
We also mean $u(0,\,.\,)  = u_0$ as   $u(t,\ldot) \to u_{0}$ in distribution sense. 
We look for (mild) solutions in the integral form 
 \begin{align}\label{eq:lineq} 
 u(t,\ldot) =  e^{-tL}u_0 + \int_0^t e^{-(t-s)L}   \div_x  F(s,\ldot) \,ds.
\end{align}

 Each term will be appropriately defined. In particular, we use the same notation for $e^{-tL}$ while they have different meanings.

\subsection{The path space and main results}

For this model case, we work with a slightly different path space than previously. We use the  notation $\barint$ to denote averages.

\begin{Def}
For $(t,x) \in \R^{n+1}_+$, define the (parabolic) Whitney box of standard size as  
$$
	W(t,x):=(t,2t)\times B(x,\sqrt{t}).
$$
For $1\leq q,r \leq \infty$,  $F 
$ measurable  in $\R^{n+1}_+$ and $(t,x) \in \R^{n+1}_+$, the Whitney average of $F$ is defined as
$$
	(W_{q,r} F)(t,x)=\left(\barint_t^{2t}\left(\barint_{B(x,\sqrt{t})} |F(s,y)|^q\,dy\right)^{r/q}\,ds\right)^{1/r}.
$$
with the usual essential supremum modification when $q=\infty$ or/and $r=\infty$.  
For $q=r$, we write $W_q F=W_{q,q}F$, that is
$$
	(W_q F)(t,x):=|W(t,x)|^{-1/q}\|F\|_{L^q(W(t,x))}
$$
or the essential supremum on $W(t,x)$ for $q=\infty$.  
The tent spaces $T^{\infty,1,q,r}(\R^{n+1}_+)$ and $T^{\infty,2,q,r}(\R^{n+1}_+)$ are defined as the spaces of all  measurable functions $F$ in  $\R^{n+1}_+$ 
 such that
$$
	\|F\|_{T^{\infty,p,q,r}(\R^{n+1}_+)} 
	=\|W_{q,r}F\|_{T^{\infty,p}(\R^{n+1}_+)}<\infty,
$$
for $p \in \{1,2\}$, respectively.\\
 The tent space $T^{1,\infty,2}(\R^{n+1}_+)$ is defined as the space of all measurable functions $F$  in  $\R^{n+1}_+$ such that
$$
	\|F\|_{T^{1,\infty,2}(\R^{n+1}_+)}=\|N(W_2F)\|_{L^1(\R^n)}<\infty.
$$
For $p \in [1,\infty)$ and $F$ measurable in $\R^{n+1}_+$,
set
$$
	N_pF(t,x)=|B(x,\sqrt{t})|^{-1/p} \|F(t,\,.\,)\|_{L^p(B(x,\sqrt{t}))}, \qquad (t,x) \in \R^{n+1}_+.
$$
This is well-defined almost everywhere.
\end{Def}

We quote \cite[Theorem 3.1, Theorem 3.2]{HR} which gives a Carleson duality result for tent spaces with Whitney averages. 
\begin{Prop} \label{Carleson-HR}
 There exists $C>0$ such that for functions $F,G$ measurable in $\R^{n+1}_+$,
$$
	\|FG\|_{L^1(\R^{n+1}_+)} \leq C \|F\|_{T^{1,\infty,2}(\R^{n+1}_+)} \|G\|_{T^{\infty,1,2}(\R^{n+1}_+)}.
$$
Moreover, $(T^{1,\infty,2}(\R^{n+1}_+),T^{\infty,1,2}(\R^{n+1}_+))$  form a dual pair  with respect to the duality $(F,G) \mapsto \iint_{\R^{n+1}_+} FG\,dxdt$ 
 in the sense that for all $F \in T^{1,\infty,2}(\R^{n+1}_+)$, 
$$ \|F\|_{T^{1,\infty,2}(\R^{n+1}_+)} \sim \sup_{\|G\|_{T^{\infty,1,2}(\R^{n+1}_+)}=1 } |(F,G)|, $$
and for all  $G \in T^{\infty,1,2}(\R^{n+1}_+)$,
$$ \|G\|_{T^{\infty,1,2}(\R^{n+1}_+)} \sim \sup_{\|F\|_{T^{1,\infty,2}(\R^{n+1}_+)}=1 } |(F,G)|.$$
\end{Prop}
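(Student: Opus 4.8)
\textbf{Plan of proof for Proposition \ref{Carleson-HR}.}
The statement is quoted verbatim from \cite[Theorem 3.1, Theorem 3.2]{HR}, so strictly speaking I would simply invoke that reference. Still, let me indicate how the argument goes, since it follows the classical Coifman--Meyer--Stein scheme adapted to Whitney averages. The plan is to reduce everything to the already-known Carleson embedding / tent space duality in elliptic scaling. First I would pass through the change of variables $t \mapsto t^2$ that, as recalled just after Definition \ref{def:tent}, identifies our parabolically scaled tent spaces with the standard ones of \cite{CMS}; this reduces the parabolic case to the elliptic case and lets me use all of \cite{CMS} and \cite{FS} off the shelf. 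Then the Whitney average $W_2 F$ replaces the pointwise value of a function by an $L^2$ average over a Whitney box: the key point is that for $F$ with enough local regularity (which is all that enters here, since the operators we apply it to produce weak solutions whose gradients are locally $L^2$), the quantity $W_2 F(t,x)$ is comparable, up to harmless dilations of the aperture, to the value one would control by the usual non-tangential maximal function and square function. So the whole content is a Whitney-averaged version of the tent space machinery.

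The concrete steps I would carry out: (1) Prove the pointwise Carleson inequality $\iint_{\R^{n+1}_+} |FG|\,dxdt \lesssim \|N(W_2 F)\|_{L^1} \, \|W_{2} G\|_{T^{\infty,1}}$ — wait, more precisely $\lesssim \|F\|_{T^{1,\infty,2}} \|G\|_{T^{\infty,1,2}}$ — by the standard stopping-time / good-$\lambda$ decomposition of $\R^n$ into the region $\{N(W_2 F) > \lambda\}$ and its complement, exactly as in the proof that $T^{1,\infty}$ pairs with Carleson measures in \cite{CMS} and \cite{FS}, but with every ``value at $(s,y)$'' replaced by its Whitney average and using that two nearby Whitney boxes have comparable sizes. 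Covering lemmas (Whitney decomposition of the open set $\{N(W_2 F)>\lambda\}$) control the overlap. (2) For the two reverse inequalities (the ``$\gtrsim$'' halves, i.e.\ that the stated suprema recover the norms), the harder one is recovering $\|F\|_{T^{1,\infty,2}}$ from testing against $G \in T^{\infty,1,2}$: here I would, for fixed $F$, construct a near-extremal $G$ supported on the ``tent'' below the non-tangential maximal function of $W_2 F$ — essentially $G = \overline{F}\,\Eins_{\text{tent}}$ suitably normalised — and check, using the Whitney-box geometry, that $\|G\|_{T^{\infty,1,2}} \lesssim 1$ while $(F,G)$ is of the order of $\|F\|_{T^{1,\infty,2}}$. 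Recovering $\|G\|_{T^{\infty,1,2}}$ from testing against $F \in T^{1,\infty,2}$ is the easier direction and follows by choosing $F$ an indicator of a single Carleson box normalised in $T^{1,\infty,2}$.

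The main obstacle is bookkeeping rather than conceptual: one must be careful that the aperture of the cones and the dilation factors in the Whitney boxes $W(t,x)=(t,2t)\times B(x,\sqrt t)$ can be adjusted without changing the spaces up to equivalent norms (aperture-independence of tent space norms, here in the Whitney-averaged form), and that the exponent pair $(q,r)=(2,2)$ behaves well under these manipulations — in particular that $W_2$ of a weak solution is finite a.e., which is where the local $L^2$ control of $u$ and $\nabla u$ is used. Once the aperture-change and the Whitney-box comparability lemmas are in place, the good-$\lambda$ argument and the extremal-$G$ construction go through as in the classical case. Since this is precisely the content of \cite{HR}, I would in the paper simply cite it and move on; the sketch above is only to record why the result is true in the form we need.
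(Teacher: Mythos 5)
Your proposal matches the paper exactly: the paper gives no proof of Proposition \ref{Carleson-HR} at all, but simply quotes \cite[Theorem 3.1, Theorem 3.2]{HR}, which is precisely what you say you would do. Your additional sketch of the Coifman--Meyer--Stein-type argument with Whitney averages is a reasonable outline of why the cited result holds (though note the statement is purely about measurable functions, so the remarks about local regularity of weak solutions are not needed), but since the paper itself relies entirely on the citation, no further comparison is called for.
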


Let us define a  path space for the model equation \eqref{SLPE}, which we again denote by $\calE_T$.

\begin{Def} \label{Def:ET-new}
Let $T \in (0,\infty]$. Let $p \in [1,\infty)$.   Define
\begin{equation} \label{pathsp-new}
	\calE_T:=\{ u \ \mathrm{measurable\  in} \ (0,T) \times \R^n  \,:\, \norm{u}_{\calE_T} <\infty\},
\end{equation}
with 
\[
	 \norm{u}_{\calE_T} := \norm{N_{2p}(s^{1/2}u(s,\,.\,))}_{L^{\infty}((0,T) \times\R^n)} + \sup_{x \in \R^n} \sup_{0<t<T} \left(t^{-n/2} \int_0^t \int_{B(x,\sqrt{t})} \abs{(W_{2p}u)(s,y)}^2 \,dyds\right)^{1/2}.
\] 
\end{Def}

Compared with the path space for  Navier-Stokes, we have a weaker requirement on the $L^\infty$ term (control of local $L^{2p}$ norms with finite $p$ instead of $p=\infty$) and stronger requirement in the Carleson control ($L^{2p}$ integrability with  $2p>n$  whereas $2p=2$ works for Navier-Stokes).\\

We obtain the following well-posedness result. As before, we restrict ourselves to the case $T=\infty$.

\begin{Theorem} \label{thm:model}
 Suppose $p \in [2,\infty)$ with $2p>n$. There exists $\eps>0$ such that for all $u_0 \in BMO^{-1}(\R^n)$ with $\|u_0\|_{BMO^{-1}}<\eps$,  the  equation \eqref{eq:inteq} has a unique   solution $u$ in a ball of  $\calE_{\infty}$.  This solution is a weak solution to \eqref{SLPE}. 
\end{Theorem}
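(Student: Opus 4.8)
The plan is to mimic the strategy used for Navier-Stokes (Theorem \ref{mainTheorem}) but now in the Whitney-averaged path space $\calE_\infty$ of Definition \ref{Def:ET-new}, replacing pointwise kernel bounds and self-adjointness by the $L^2$--$L^\infty$ (or $L^2$--$L^{2p}$) off-diagonal estimates available for $e^{-tL}$ and $\sqrt{t}\,\nabla e^{-tL}$. First I would record the relevant properties of $L=-\div(A\nabla)$: analyticity of the semigroup on $L^2$, the Davies--Gaffney/off-diagonal $L^2$--$L^2$ estimates for $e^{-tL}$, $t L e^{-tL}$ and $\sqrt{t}\,\nabla e^{-tL}$, the existence of a bounded $H^\infty$-calculus, and the local $L^2$--$L^{2p}$ estimates for $e^{-tL}$ valid in the range $2p$ controlled by the Sobolev exponent (this is where $2p>n$, more precisely the restriction $p\in[2,\infty)$, enters, ensuring that Whitney $L^{2p}$ averages behave like $L^2$ averages up to harmless factors of $t$). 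I would also note that, because $f$ is globally Lipschitz with $|f(x)|\lesssim|x|$, the map $u\mapsto f(u^2)$ sends the nonlinear estimate for $B$ to a linear estimate for $F=f(u^2)$ exactly as in Step 1 of the proof of Theorem \ref{mainTheorem}: if $u\in\calE_\infty$ then $u^2$, hence $F$, lies (after the appropriate $s$-weights) in the Whitney tent spaces $T^{\infty,1,2}$ and $T^{\infty,2,2}$ and $sF\in L^\infty$-type space, via H\"older on Whitney boxes.

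Next I would run the Picard contraction scheme: show that $u_0\mapsto e^{-tL}u_0$ maps $BMO^{-1}(\R^n)$ boundedly into $\calE_\infty$ (this is essentially the characterization of $BMO^{-1}$ adapted to $L$, using the Carleson-measure bound for $|\nabla e^{-tL}u_0|^2\,dxdt$ and the $L^2$--$L^{2p}$ smoothing to upgrade to Whitney $2p$-averages), and then establish the key bilinear bound: $B:\calE_\infty\times\calE_\infty\to\calE_\infty$ with $\|B(u,v)\|_{\calE_\infty}\lesssim\|u\|_{\calE_\infty}\|v\|_{\calE_\infty}$ (here interpreting $B$ via $F$). Smallness of $\|u_0\|_{BMO^{-1}}<\eps$ then gives a unique fixed point in a small ball of $\calE_\infty$ by the standard contraction lemma. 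The heart of the matter is therefore the linear estimate for $\calA(F)(t,\cdot)=\int_0^t e^{-(t-s)L}\div_x F(s,\cdot)\,ds$ into $\calE_\infty$, i.e.\ analogues of \eqref{linear-est1}--\eqref{linear-est2}. I would use the same three-term splitting as in Step 3:
\begin{align*}
\calA(F)(t,\cdot) &= \int_0^t e^{-(t-s)L}(-L)(sL)^{-1}(I-e^{-2sL})\, s^{1/2}\div_x\, s^{1/2}F(s,\cdot)\,ds \\
&\quad + \int_0^\infty e^{-(t+s)L}\div_x F(s,\cdot)\,ds - \int_t^\infty e^{-(t+s)L} s^{-1/2}\div_x\, s^{1/2}F(s,\cdot)\,ds.
\end{align*}
The first term is handled by the boundedness of the maximal regularity operator $\calM^+$ on $T^{\infty,2}$ (from \cite{AMP}, applicable since $tLe^{-tL}$ satisfies $L^2$--$L^2$ off-diagonal decay of arbitrary polynomial order) composed with a pointwise-in-$s$ multiplier operator $T_s=(sL)^{-1}(I-e^{-2sL})s^{1/2}\div$; the latter is bounded on the Whitney tent space by the abstract Lemma \ref{boundedness-calT}, whose hypothesis \eqref{L2-Linfty-Ts} I must verify — this is where I'd check that $T_s$ has $L^2$--$L^{2p}$ off-diagonal estimates of order $\tfrac n2+1$, using the $H^\infty$-calculus representation of $(sL)^{-1}(I-e^{-2sL})$ and the off-diagonal bounds for $\sqrt{s}\,\nabla e^{-\tau L}$. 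The third term is the error term, bounded by a Schur-type argument plus Whitney-box off-diagonal decomposition exactly as in Lemma \ref{Lemma-errorterm}, now reading off-diagonal bounds from $L^2$--$L^{2p}$ estimates rather than convolution kernels. The $L^\infty$-type estimate \eqref{linear-est1} (in its $N_{2p}$-averaged form) is proved similarly by splitting into annuli and using off-diagonal decay.

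The main obstacle — and the genuinely new part, as flagged in Step 3(ii) — is the middle term $\calA_2(F)(t,\cdot)=\int_0^\infty e^{-(t+s)L}\div_x F(s,\cdot)\,ds$, for which the Whitney-averaged Carleson norm of $F$ (the $T^{\infty,1,2}$ piece) is needed rather than the weighted $T^{\infty,2}$ piece, and for which no pointwise bound is available. I would dualize: it suffices to bound $\calA_2^*:T^{1,\infty,2}\to\ ?$, equivalently to bound $G\mapsto e^{-s L^*}\int_0^\infty \nabla e^{-tL}G(t,\cdot)\,dt$ into the tent space dual to $T^{\infty,1,2}$, and factor this through a Hardy space $H^1_{L^*}(\R^n)$ adapted to $L^*$. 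The two ingredients are: (a) the square-function operator $G\mapsto\int_0^\infty\nabla e^{-tL}G(t,\cdot)\,dt$ is bounded from $T^{1,2}$ (elliptic-scaled, Whitney form $T^{1,\infty,2}$'s predual) into $H^1_{L^*}$ — this rests on quadratic estimates / the $H^\infty$-calculus for $L$ and the theory of Hardy spaces associated with sectorial operators (\cite{ADM}, \cite{AMR}, \cite{HM}, \cite{HMMc}); and (b) for $h\in H^1_{L^*}$, the map $(s,x)\mapsto e^{-sL^*}h(x)$ lies in the tent space $T^{1,\infty,2}$ with controlled norm — the non-tangential-maximal-function characterization of $H^1_{L^*}$ via the semigroup, again from that theory. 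The delicate point is that $H^1$ must be the operator-adapted Hardy space $H^1_{L^*}$ rather than the classical one (the Riesz transform $\nabla L^{-1/2}$ need not be $H^1$-bounded without extra assumptions), so I must ensure $\div_x$ and $\nabla_x$ are compatible with this adapted space; here the structure $\div(A\nabla)$ and the identification of the tent-space dual from Proposition \ref{Carleson-HR} are what make the argument close. Once \eqref{linear-est1}--\eqref{linear-est2}-analogues are in hand, the contraction argument and the verification that the fixed point is a weak solution of \eqref{SLPE} (testing the integral identity \eqref{eq:inteq} against $\varphi$ and integrating by parts, using $\nabla_x u\in L^2_{\loc}$, which follows from the $\calE_\infty$-bound together with $\sqrt{t}\,\nabla e^{-tL}$ estimates) are routine.
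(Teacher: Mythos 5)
Your overall architecture is the paper's: contraction in the Whitney-averaged path space, a bound for the free evolution, the splitting $\calA=\calA_1+\calA_2+\calA_3$, and a Hardy-space treatment of $\calA_2$. But two of your key steps have genuine gaps. For $\calA_1$, the path-space norm requires a bound in $T^{\infty,2,2p}$ (Whitney $L^{2p}$ averages), and, unlike $\calA_2$ and $\calA_3$, the term $\calA_1$ has no semigroup reproducing property, so there is no a posteriori upgrade from $T^{\infty,2}$ via averaging: you need the maximal regularity operator $\calM^+$ to be bounded on $T^{\infty,2,2p}$ itself, which does not follow from \cite{AMP} and is a separate result in the paper (Lemma \ref{lemma:maxreg-new}, proved by a time-localization formula plus $L^q$-maximal regularity from \cite{CD1}); your composition $\calM^+\calT$ with $\calM^+$ only known on $T^{\infty,2}$ cannot produce the $T^{\infty,2,2p}$ bound. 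Worse, your plan to verify $L^2$--$L^{2p}$ off-diagonal bounds for $T_s=(sL)^{-1}(I-e^{-2sL})s^{1/2}\div$ and feed them into (a variant of) Lemma \ref{boundedness-calT} fails on exponent arithmetic: the only mechanism for gaining integrability here is the Sobolev mapping $L^{-1/2}:L^q\to L^{q^*}$ combined with the Kato square root bound for $L^{-1/2}\div$ on $L^q$ (neither $I-e^{-2sL}$ nor off-diagonal decay improves exponents, and the crude representation $T_s=-s^{-1/2}\int_0^{2s}e^{-uL}\div\,du$ produces a non-integrable singularity in $u$ once $2p\geq \frac{2n}{n-2}$), so starting from local $L^2$ data one can reach at most $L^{2^*}=L^{2n/(n-2)}$; this is incompatible with $p\geq 2$, $2p>n$ as soon as $n\geq 4$. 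The correct bookkeeping is to measure $s^{1/2}\alpha$ only in $L^p$ Whitney averages (the space $T^{\infty,2,p,2p}$, which is all that $\alpha\simeq u^2$ provides) and prove $T_s:L^p\to L^{2p}$, which is exactly where the hypothesis $2p\leq p^*$, i.e.\ $2p>n$, is used (Lemma \ref{lemma:revH-space}); your proposal never invokes $2p>n$ at the place where it is actually needed.

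For $\calA_2$, your dualization replaces $\nabla e^{-sL^*}\int_0^\infty e^{-tL^*}G\,dt$ by $e^{-sL^*}\int_0^\infty \nabla e^{-tL^*}G\,dt$; this commutation of $\nabla$ with the semigroup is precisely what is unavailable for $L=-\div(A\nabla)$ (it is the whole point of the model), so your two ingredients -- a square-function map into an adapted $H^1$ followed by a non-tangential maximal estimate for $e^{-sL^*}$ on that $H^1$ -- do not apply to the operator actually at hand, where the gradient sits outside the $s$-semigroup. The paper's fix is not cosmetic: one embeds into the first-order Hodge--Dirac framework, writing the quantity as the tangential part of $e^{-sDB^*DB^*}h$ with $h$ built from $\int_0^\infty\nabla e^{-tL^*}G\,dt\in H^1(\R^n;\C^n)$, and uses $H^1_{DB^*}=H^1_D$ (valid because $B^*$ has real coefficients) together with the non-tangential maximal estimates of \cite{AS} (Lemma \ref{lemma:A2-new}); acknowledging that ``an adapted Hardy space is needed'' does not close this step. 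Two further points you dismiss as routine are not: the $N_{2p}(t^{1/2}\cdot)$ estimate cannot be done ``as in Koch--Tataru'' since no pointwise kernel bound is available -- the paper argues by duality against $L^{(2p)'}$ data using the Carleson duality of \cite{HR} and Kolmogorov's lemma -- and the very definition of the Duhamel term, its trace at $t=0$, and the weak-solution property require the substantial arguments of Proposition \ref{prop:limit} and Sections \ref{sec:pf-limit}--\ref{sec:weak-sol}.
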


 The corresponding linear theorem on which this theorem bears is as follows.

\begin{Theorem} \label{thm:modellin}
 Suppose $p \in [2,\infty)$ with $2p>n$.  For all $u_0 \in BMO^{-1}(\R^n)$ and    $F \in T^{\infty,1,p}(\R^{n+1}_+;\C^n)$ with $\norm{s^{1/2}F(s,\,.\,)}_{T^{\infty,2,p,2p}}<\infty$, the function $u$ defined by \eqref{eq:lineq} is a weak solution to \eqref{lineq}  and satisfies the estimate
 $$
	\norm{u}_{T^{\infty,2,2p}}
	 \lesssim  \|u_0\|_{BMO^{-1}} + \norm{F}_{T^{\infty,1,p}}
	 + \norm{s^{1/2}F(s,\,.\,)}_{T^{\infty,2,p,2p}}.
$$
Moreover,   if in addition, $\norm{N_p(sF(s,\,.\,))}_{{\infty}}<\infty$, then
$$
\norm{N_{2p}(t^{1/2}u(t, \,.\,))}_{{\infty}} 
	 \lesssim  \|u_0\|_{BMO^{-1}}+   \norm{F}_{T^{\infty,1,p}} +  \norm{N_p(sF(s,\,.\,))}_{{\infty}}.
	 $$ 
\end{Theorem}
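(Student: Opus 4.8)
The strategy mirrors the proof of Theorem \ref{mainTheorem}: we split the mild-solution formula \eqref{eq:lineq} into a free-evolution term $e^{-tL}u_0$ and the bilinear/linear contribution $\calA(F)(t,\,.\,)=\int_0^t e^{-(t-s)L}\div_x F(s,\,.\,)\,ds$, and decompose $\calA$ exactly as in Step 3 of Section \ref{section:newProof} using the algebraic identity
\begin{align*}
 e^{-(t-s)L}L (sL)^{-1}(I-e^{-2sL}) = e^{-(t-s)L}L - \text{error},
\end{align*}
writing $\calA=\calA_1+\calA_2+\calA_3$ with $\calA_1$ a maximal-regularity operator composed with a multiplier family $T_s$, $\calA_2(F)(t,\,.\,)=\int_0^\infty e^{-(t+s)L}\div_x F(s,\,.\,)\,ds$, and $\calA_3$ a genuine error term. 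The point of the model case is that none of the three pieces may be treated with pointwise kernel bounds or self-adjointness; instead we use $L^2$--$L^2$ off-diagonal (Davies--Gaffney) estimates for $e^{-tL}$, $\sqrt t\,\nabla e^{-tL}$, $tL e^{-tL}$, valid since $\Re A\geq\kappa I$, together with $L^p$--$L^q$ off-diagonal estimates in the range of $p$ dictated by $2p>n$ (this is where the stronger Carleson integrability hypothesis enters). The free term is handled by the very definition of $BMO^{-1}$: $u_0\in BMO^{-1}$ means precisely $e^{-tL}u_0\in T^{\infty,2}$ in the relevant Whitney-averaged sense, and one upgrades this to $N_{2p}$ control of $t^{1/2}e^{-tL}u_0$ using local boundedness / Moser-type estimates for the semigroup of $L$ (self-improvement of Gaussian/off-diagonal bounds to $L^2$--$L^{2p}$ bounds on Whitney boxes, using $2p>n$).

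For $\calA_1$, I would invoke the boundedness of the maximal regularity operator $\calM^+$ on $T^{\infty,2}$ — here citing \cite{AMP} and \cite{AKMP} with $L$ a divergence-form operator, using only off-diagonal decay of $tLe^{-tL}$ rather than Gaussian kernel bounds — and then apply the abstract Lemma \ref{boundedness-calT} to the family $T_s=(sL)^{-1}(I-e^{-2sL})s^{1/2}\div$, checking the required $L^2$--$L^\infty$ (here rather $L^{2}$--$L^{2p}$, Whitney-averaged) off-diagonal estimate of order $\frac n2+1$ — this follows from composing off-diagonal bounds for $(I-e^{-2sL})$, the bounded $H^\infty$-calculus of $L$ giving $(sL)^{-1}$ on the right spectral subspace, and the $\sqrt s\,\div$ factor. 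For $\calA_3$, the self-contained Lemma \ref{Lemma-errorterm} adapts verbatim once one replaces the explicit convolution kernel bounds \eqref{kernel-kts} by the corresponding $L^p$--$L^q$ Whitney off-diagonal estimates for $K(t,s)=e^{-(t+s)L}\div_x s^{-1/2}$, since the Schur-test argument and the two-scale annular decomposition only used the $L^2$-bound $\|K(t,s)\|\lesssim s^{-1/2}(t+s)^{-1/2}$ and polynomial off-diagonal decay, both of which survive. The $N_{2p}$-in-$L^\infty$ estimate for $u$ is obtained by running the analogue of Step 2: the argument of \cite{KochTataru} for \eqref{linear-est1} uses only polynomial off-diagonal decay, and one additionally pays a Moser-type step to pass from $L^2$ to $L^{2p}$ Whitney averages, legitimate because $2p>n$.

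The genuinely new and hardest ingredient is again $\calA_2$ (Step 3(ii)), the Hardy-space factorisation, because self-adjointness of $L$ is unavailable and $e^{-tL}$ has no pointwise kernel bound. The plan is to dualise, reducing to the boundedness of $\calA_2^\ast G(s,\,.\,)=e^{-s L^\ast}\int_0^\infty \nabla (L^\ast)\cdot\!{}^{-1}\cdots$ — more precisely $G\mapsto e^{-sL^\ast}\int_0^\infty (\nabla\,\text{-type operator})e^{-tL^\ast}G(t,\,.\,)\,dt$ — from $T^{1,2}$ (Whitney version) into $T^{1,\infty,2}$, using the Carleson duality of Proposition \ref{Carleson-HR}. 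This is where one must replace the classical Fefferman--Stein $H^1$ of $\R^n$ by the Hardy space $H^1_{L^\ast}(\R^n)$ adapted to $L^\ast$ (cf. \cite{ADM},\cite{AMR},\cite{HM},\cite{HMMc}): one shows $\calS G(\,.\,)=\int_0^\infty \nabla_{\!A^\ast} e^{-tL^\ast}G(t,\,.\,)\,dt$ maps $T^{1,2}\to H^1_{L^\ast}$ via the square-function characterisation of $H^1_{L^\ast}$, and conversely that $h\mapsto e^{-sL}h$ maps $H^1_{L^\ast}$ (or $H^1_L$, depending on which conjugate space the calculation lands in) into $T^{1,\infty,2}$ via the non-tangential-maximal-function characterisation, the equivalence of the two characterisations of $H^1_L$ being exactly the theory developed in the cited references. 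The obstacle is bookkeeping: matching the divergence-form structure $\div_x f(u^2)$ and the conormal gradient $\nabla_{\!A}$ on both sides, verifying that the square function and maximal function characterisations of $H^1_{L^\ast}$ hold in the parabolic Whitney-averaged scaling with $L^{2p}$ rather than $L^2$ interior control, and keeping the adjoint operator $L^\ast=-\div(A^\ast\nabla)$ (which satisfies the same ellipticity) consistently on the correct side of every duality bracket.
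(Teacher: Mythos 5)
Your plan reproduces the paper's architecture (free evolution plus the splitting $\calA=\calA_1+\calA_2+\calA_3$, off-diagonal estimates replacing kernels, a Hardy space treatment of $\calA_2$), but at two crucial points it asserts that the Navier--Stokes arguments ``adapt'' where the paper in fact had to invent new ones, and as written those steps would fail. First, the $N_{2p}$ estimate \eqref{linear-est1-new}: you propose to ``run the analogue of Step 2'' of \cite{KochTataru} plus a Moser-type step. For the part $s\ll t$ the only control on $\alpha$ is the Whitney-averaged Carleson norm $\|\alpha\|_{T^{\infty,1,2}}$, and the operator that appears after dualising the divergence, $\nabla e^{-(t-s)L^\ast}$, has no pointwise kernel bound, so one cannot integrate a kernel against $|\alpha|$ as Koch--Tataru do; nor is Moser iteration available for the inhomogeneous Duhamel term (the forcing $F=f(u^2)$ is only in tent spaces). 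The paper replaces this step entirely (Section \ref{sec:linfty-est}): it tests against $g\in L^{(2p)'}(B(x,\sqrt t))$, shows the resulting functions $\beta_j$ belong to $T^{1,\infty,2}$ with bounds obtained from Kolmogorov's lemma, $L^{(2p)'}$--$L^2$ off-diagonal estimates and analyticity of the semigroup, and concludes by the Carleson duality of Proposition \ref{Carleson-HR} (\cite{HR}). Second, for $\calA_2$ what you call ``bookkeeping'' is the actual obstruction: in $(\calA_2^\ast G)(s,\cdot)=\nabla e^{-sL^\ast}\int_0^\infty e^{-tL^\ast}G(t,\cdot)\,dt$ the gradient sits outside the semigroup and cannot be commuted inside, so the factorisation ``square function into an adapted $H^1_{L^\ast}$, then the nontangential maximal characterisation applied to $e^{-sL}h$'' does not produce $\calA_2^\ast$. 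The paper's Lemma \ref{lemma:A2-new} resolves this by embedding the scalar problem into the first-order operator $DB^\ast$ (Hodge--Dirac framework), mapping $T^{1,2}$ into the classical $H^1$ via the Kato square root estimate and $L^1$--$L^2$ off-diagonal bounds, and then invoking \cite[Theorem 9.1]{AS} together with the identification $H^1_{DB^\ast}=H^1_D$, which uses that $B^\ast$ has real entries; this input is not supplied by the adapted Hardy space references you cite.

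There are further gaps relative to the statement. The conclusion is in $T^{\infty,2,2p}$, so boundedness of the maximal regularity operator on plain $T^{\infty,2}$ (\cite{AMP}) is not enough; the paper proves Lemma \ref{lemma:maxreg-new} on $T^{\infty,2,q}$ by a time-localisation $\widetilde{\calM}^+$, the $L^q$-maximal regularity of \cite{CD1}, and change of angle, and similarly $\calA_2$, $\calA_3$ are upgraded to Whitney-averaged norms through Lemma \ref{lemma:revH}; your plan does not address how the interior $L^{2p}$ averages are reached for $\calM^+$. The claim that $u_0\in BMO^{-1}$ gives $e^{-tL}u_0\in T^{\infty,2}$ ``by the very definition'' is also not correct for the classical $BMO^{-1}$ and the semigroup of $L$: the paper must identify $BMO^{-1}$ with the adapted space $BMO^{-1}_L$ via \cite{HMMc} and then prove Corollary \ref{cor:bmoL} and Corollary \ref{cor:init-cond}. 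Finally, the theorem asserts that $u$ is a weak solution: giving a meaning to the Duhamel integral, its continuity and vanishing trace, and the verification of the weak formulation (Proposition \ref{prop:limit}, Proposition \ref{rem:weak}, Corollary \ref{lem:weak-sol}, Sections \ref{sec:pf-limit}--\ref{sec:weak-sol}) occupy a substantial part of the paper's proof and are absent from your proposal.
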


The strategy of proof is as follows. In Section \ref{sec:freeev}, we study the free evolution, and in Section \ref{sec:Duhamel}, we state the main results for the Duhamel term. 
The proofs of Theorem \ref{thm:modellin} and Theorem \ref{thm:model}  are given in Section \ref{sec:pf-mainthm} assuming the technical estimates are proved.   We prove in Sections \ref{sec:linfty-est} and \ref{sec:Carl-est}, respectively, the $L^\infty$ estimate and the Carleson measure estimate on the Duhamel term.  As this is technical, we  postpone to   Section \ref{sec:pf-limit}  the meaning of the Duhamel term and to Section  \ref{sec:weak-sol}  that it is a   weak solution.  \\

Our proof relies on the following estimates on the semigroup   $(e^{-tL})_{t>0}$   generated by the $-L$ defined as a maximal accretive operator on $L^2(\R^n)$. The same estimates hold true for $L$ replaced by $L^\ast$. 

\begin{Lemma} \label{kernel-est-L}
(i) Denote by $w_t(x,y)$ the kernel of $e^{-tL}$. It is a H\"older continuous function and there 
 exist constants $C,c>0$ such that for all $t>0, x,y\in \R^n$,
\begin{align*}
	|w_t(x,y)|  \leq C t^{-\frac{n}{2}}\exp(-c t^{-1}|x-y|^2).
\end{align*}
(ii) There exists $\eps>0$ such that $\nabla e^{-tL}$ is bounded from $L^1(\R^n)$ to $L^q(\R^n)$ if $1\leq q <2+\eps$.  
Moreover, one has $L^1$-$L^q$ off-diagonal estimates for $\nabla e^{-tL}$ of the form
\begin{align} \label{L1-L2-est}
	\|\Eins_E \sqrt{t}\nabla e^{-tL} \Eins_{\tilde E}\|_{L^1(\R^n) \to L^q(\R^n)}
	\leq C t^{-\frac{1}{2}} t^{-\frac{n}{2}(1-\frac{1}{q})}\exp(-ct^{-1}\dist(E,\tilde E)^2)
\end{align}
for all Borel sets $E,\tilde E \subseteq \R^n$ and $t>0$. 
\end{Lemma}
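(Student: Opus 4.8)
The plan is to derive both parts from the classical regularity theory for real second-order divergence-form operators, together with the self-improvement of $L^p$-boundedness for the gradient of the semigroup. \textbf{Part (i).} Since $L=-\div(A\nabla)$ with $\Re A\geq\kappa I$, the operator $L$ is maximal accretive on $L^2(\R^n)$ and generates a bounded analytic semigroup $(e^{-tL})_{t>0}$. I would first record the $L^2$ Davies--Gaffney estimates $\norm{\Eins_E e^{-tL}\Eins_{\tilde E}}_{2\to 2}\lesssim\exp(-c\,\dist(E,\tilde E)^2/t)$, obtained by the exponential perturbation $e^{\rho\phi}Le^{-\rho\phi}$ with $\phi$ Lipschitz and $\rho$ optimised. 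Moser iteration (the De Giorgi--Nash--Moser theory is available since the coefficients are real) gives the on-diagonal bound $\norm{e^{-tL}}_{1\to\infty}\lesssim t^{-n/2}$, which combined with the $L^2$ off-diagonal bounds produces, in the usual way, the pointwise Gaussian kernel estimate; the interior parabolic Hölder estimate of De Giorgi--Nash then gives Hölder continuity of $(x,y)\mapsto w_t(x,y)$. The hypotheses on $A$ pass to $A^\ast$, so the same bounds hold for $L^\ast$.

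\textbf{Part (ii).} The $L^2\to L^2$ bound $\norm{\sqrt t\,\nabla e^{-tL}}_{2\to 2}\lesssim 1$ follows from $\Re\langle Lv,v\rangle\geq\kappa\norm{\nabla v}_2^2$ applied to $v=e^{-tL}u$ together with analyticity ($\norm{Le^{-tL}}_{2\to 2}\lesssim t^{-1}$), and the exponential-weight method upgrades it to the $L^2$ off-diagonal estimates for $\sqrt t\,\nabla e^{-tL}$. The existence of $\eps>0$ with $\sqrt t\,\nabla e^{-tL}$ bounded on $L^q(\R^n)$ for all $q\in[1,2+\eps)$ — the endpoint $q=1$ being the integrated gradient bound $\sup_y\int_{\R^n}|\sqrt t\,\nabla_x w_t(x,y)|\,dx\lesssim 1$ — is the known self-improvement result for real divergence-form operators, which I would quote from the literature. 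A standard decomposition into dyadic annuli then turns $L^q$-boundedness plus the $L^2$ Davies--Gaffney bounds into $L^q\to L^q$ off-diagonal estimates for $\sqrt t\,\nabla e^{-tL}$ with Gaussian decay.

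To reach the $L^1\to L^q$ off-diagonal estimate \eqref{L1-L2-est}, factor $\sqrt t\,\nabla e^{-tL}=(\sqrt t\,\nabla e^{-tL/2})\circ e^{-tL/2}$, and for Borel sets $E,\tilde E$ with $d:=\dist(E,\tilde E)\geq\sqrt t$ split $e^{-tL/2}\Eins_{\tilde E}=\Eins_D e^{-tL/2}\Eins_{\tilde E}+\Eins_{D^c}e^{-tL/2}\Eins_{\tilde E}$, where $D:=\{x:\dist(x,\tilde E)<d/2\}$. In the first term $\dist(E,D)\geq d/2$, so the $L^q\to L^q$ off-diagonal estimate for $\sqrt t\,\nabla e^{-tL/2}$ supplies the Gaussian factor while $\norm{\Eins_D e^{-tL/2}\Eins_{\tilde E}}_{L^1\to L^q}$ is controlled by the Gaussian kernel bound of part (i); in the second term one uses only $L^q$-boundedness of $\sqrt t\,\nabla e^{-tL/2}$ and the fact that $\dist(D^c,\tilde E)\geq d/2$ forces $\norm{\Eins_{D^c}e^{-tL/2}\Eins_{\tilde E}}_{L^1\to L^q}$ to carry the Gaussian decay, again by part (i). Summing the two contributions gives \eqref{L1-L2-est}; the regime $d<\sqrt t$ is just $L^1\to L^q$ boundedness.

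The main obstacle is the $L^p$-boundedness of $\sqrt t\,\nabla e^{-tL}$ past $p=2$: this is the single non-elementary external input, and it is exactly what yields the constant $\eps$ in the statement. The rest is the routine combination of Davies--Gaffney technology with the Gaussian kernel bound of part (i); care is only needed to verify that the Gaussian — not merely polynomial — decay survives each composition.
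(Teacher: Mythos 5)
Your proposal is correct, and it takes a more self-contained route than the paper, whose entire proof of this lemma consists of two citations: part (i) is quoted from \cite[Theorem 3.23]{Auscher96} and part (ii) from \cite[Proposition 1.24]{ATbook}. You instead reconstruct (i) from De Giorgi--Nash--Moser theory plus Davies--Gaffney estimates (legitimate here, since $A$ is real-valued, so the classical regularity theory and Aronson-type arguments apply), and you reconstruct (ii) from three ingredients: $L^2$ boundedness and $L^2$ off-diagonal bounds for $\sqrt{t}\,\nabla e^{-tL}$, the quoted self-improvement of $L^q$ boundedness up to some $q<2+\eps$ (you correctly isolate this as the only non-elementary input; it is exactly what the Auscher--Tchamitchian reference encapsulates), and the composition $\sqrt{t}\,\nabla e^{-tL}=(\sqrt{t}\,\nabla e^{-tL/2})\,e^{-tL/2}$ with the splitting through $D=\{x:\dist(x,\tilde E)<d/2\}$, which correctly routes the Gaussian decay either through the gradient factor or through the kernel bound of (i). Two small points. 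At $q=1$ the off-diagonal $L^1\to L^1$ bound for $\sqrt{t}\,\nabla e^{-tL/2}$ cannot come from interpolation (there is nothing below $L^1$); it follows from the dyadic-annuli argument you allude to, namely Cauchy--Schwarz on annuli combined with $L^2$ off-diagonal bounds for $\nabla e^{-(t/2)L}$ applied to $w_{t/2}(\cdot,y)$ and the Gaussian bound of (i) -- worth making explicit since you use the endpoint. Finally, your argument yields $\norm{\Eins_E\sqrt{t}\,\nabla e^{-tL}\Eins_{\tilde E}}_{L^1\to L^q}\lesssim t^{-\frac{n}{2}(1-\frac{1}{q})}\exp(-c\,\dist(E,\tilde E)^2/t)$, which is the natural scaling (sharp already for the Laplacian); the display \eqref{L1-L2-est} carries an extra factor $t^{-1/2}$ on the right while also keeping $\sqrt{t}$ on the left, so as printed it double-counts the gradient normalisation -- your version is the intended estimate and is the one actually used later in the paper (e.g. the $L^{(2p)'}$--$L^2$ bounds for $t^{1/2}\nabla e^{-tL^\ast}$ in Section \ref{sec:linfty-est}).
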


\begin{proof}
For (i), see \cite[Theorem 3.23]{Auscher96}. For (ii), see  \cite[Proposition 1.24]{ATbook}. 
\end{proof}

\begin{Remark}
The absence of  pointwise bounds for $\nabla e^{-tL^*}$ is responsible for not taking $2p=1$ in the Carleson control and not taking $2p=\infty$ in the $L^\infty$ term.  One can also weaken the estimate of Lemma \ref{kernel-est-L}. The pointwise bounds  of the kernel of $e^{-tL}$ to $L^{r'}$-$L^r$ off-diagonal estimates with $r>n$  and by \cite{memoirs} the ones on  $\nabla e^{-tL}$ become from $L^{r'}$ to $L^q$.  It implies that for dimensions $n=1,2,3,4$, one could take $L$ to have complex coefficients or even an elliptic system if one wishes (see \cite{memoirs}). The proof of this possible generalisation is a little more involved and we do not include details. 
\end{Remark}

\subsection{The free evolution}\label{sec:freeev}

We need to make sense to the free evolution term $e^{-tL}u_{0}$. Recall that in the case of the Navier-Stokes systems  (with the Laplacian in the background), the adapted value space consists of divergence free elements $u_{0}$ in $BMO^{-1}(\R^n;\C^n)$ and  is  characterized by $e^{t\Delta}u_{0}$ in the path space. We consider a similar procedure,  but  here we have to work with a space a priori adapted to the operator $L$.

We define the space $BMO^{-1}_{L}(\R^n)$ as the dual space of $H^{1,1}_{L^\ast}(\R^n)$ introduced in \cite[Section 8.4]{HMMc}.  The latter is the completion of the homogeneous Sobolev space $\dot W^{1,2}(\R^n)$ 
for the norm $\| (t,x)\mapsto tL^*e^{-tL^*}t^{-1/2}{L^*}^{1/2}h(x)\|_{T^{1,2}}$, that is
\begin{equation}
\label{eq:h11L}
\int_{\R^n} \left(\iint_{\R^{n+1}_+} t^{-n/2}\Eins_{B(x,\sqrt{t})}(y) \abs{tL^*e^{-tL^*}t^{-1/2}{L^*}^{1/2}h(y)}^2 \,dydt \right)^{1/2} dx < \infty.
\end{equation}	
 	  Note that  $h\in \dot W^{1,2}(\R^n)$  is equivalent to  ${L^*}^{1/2}h\in L^2(\R^n)$ by \cite{AHLMcT}  so that the action    of $tL^*e^{-tL^*}$ makes sense. 
	  
	   Under our assumptions on $L^*$,  $H^{1,1}_{L^\ast}(\R^n)$ can be realized as the Triebel-Lizorkin space $\dot{F}^{1,2}_1(\R^n) \cong \dot H^{1,1}(\R^n)$ (\cite[Proposition 8.43]{HMMc} together with Lemma \ref{kernel-est-L} above which shows $p_{-}(L^*)=1$ in the notation of \cite{HMMc}). We choose this realization. In particular, since the Schwartz space $\calS(\R^n)$ is dense  in $\dot{F}^{1,2}_1(\R^n)$  (\cite[Theorem 2.3.3]{Triebel}),  it makes $BMO^{-1}_{L}(\R^n)$ a space of tempered distributions equal to the standard space $BMO^{-1}(\R^n)$ as sets with equivalent topology.  From now on, we do not distinguish them (Under weaker assumptions on $L$, it could be that this identification is not possible. Still the space $BMO^{-1}_{L}$ exists).

\begin{Lemma}  Assume $u_{0}$ is a tempered distribution. Then  $u_{0}\in BMO^{-1}(\R^n)$ if and only if there exists $G\in T^{\infty,2}(\R^{n+1}_{+})$ such that 
\begin{equation}
\label{eq:bmo-1}
\langle u_{0}, h \rangle = \iint_{\R^{n+1}_{+}} G(s,y) \overline {sL^*e^{-sL^*}s^{-1/2}{L^*}^{1/2}h(y)}\, dyds \quad \forall\, h\in \calS(\R^n),
\end{equation}
the integral converging absolutely, 
and 
$$
\|u_{0}\|_{BMO^{-1}}\sim \inf \{\|G\|_{T^{\infty,2}}; \eqref{eq:bmo-1} \ \mathrm{holds}\}. 
$$
In that case, the integral exists for all $h\in  \dot W^{1,2}(\R^n)\cap \dot F^{1,2}_{1}(\R^n)$ and gives $\langle u_{0}, h \rangle$, and this functional further extends to all $h\in \dot{F}^{1,2}_1(\R^n)$ by density. 
\end{Lemma}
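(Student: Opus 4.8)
The statement is an identification of $BMO^{-1}(\R^n)$ with the dual of $H^{1,1}_{L^*}(\R^n)$, realized concretely through the quadratic functional appearing in \eqref{eq:h11L}. I would organize the proof around the factorization of the pairing through tent spaces, using the duality $(T^{1,2})' = T^{\infty,2}$ stated in Section \ref{section:newProof}.

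First I would set up notation: write $Q_s^* h := sL^*e^{-sL^*}s^{-1/2}{L^*}^{1/2}h = s^{1/2}L^*e^{-sL^*}{L^*}^{1/2}h$, so that by definition $h \mapsto Q_\cdot^* h$ is (up to equivalence of norms) an isometric embedding $\dot F^{1,2}_1(\R^n) = H^{1,1}_{L^*}(\R^n) \hookrightarrow T^{1,2}(\R^{n+1}_+)$ — this uses the square function characterization recalled in the excerpt, namely that $\|h\|_{\dot F^{1,2}_1} \sim \|Q_\cdot^* h\|_{T^{1,2}}$, which is exactly \cite[Proposition 8.43]{HMMc} combined with $p_-(L^*)=1$ from Lemma \ref{kernel-est-L}. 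Denote the closed subspace $\calV := \overline{\{Q_\cdot^* h : h \in \dot W^{1,2}\cap\dot F^{1,2}_1\}} \subseteq T^{1,2}$.

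For the "only if" direction, suppose $u_0 \in BMO^{-1}(\R^n)$. Since $BMO^{-1}_L = BMO^{-1}$ as identified in the excerpt, $u_0$ defines a bounded linear functional on $\dot F^{1,2}_1(\R^n)$, hence on $\calV$ via the embedding, with norm $\lesssim \|u_0\|_{BMO^{-1}}$. By Hahn–Banach extend it to all of $T^{1,2}(\R^{n+1}_+)$ without increasing the norm, and by the duality $(T^{1,2})' = T^{\infty,2}$ there is $G \in T^{\infty,2}$ with $\|G\|_{T^{\infty,2}} \lesssim \|u_0\|_{BMO^{-1}}$ representing it, i.e. $\langle u_0, h\rangle = \iint G \,\overline{Q_\cdot^* h}$ for $h$ in the dense class, which is \eqref{eq:bmo-1}. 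Absolute convergence of the integral for Schwartz $h$ follows from Cauchy–Schwarz in the tent-space pairing together with the fact that $Q_\cdot^* h \in T^{1,2}$ for $h \in \calS \subseteq \dot F^{1,2}_1$; one should check $h\in\calS$ indeed lies in the domain where the quadratic expression makes sense — this is where $L^{*1/2}h \in L^2$ (via $h\in\dot W^{1,2}$, using \cite{AHLMcT}) is invoked, and $\calS \subseteq \dot W^{1,2}\cap\dot F^{1,2}_1$. Conversely, if such a $G$ exists, then \eqref{eq:bmo-1} defines a functional on $\dot F^{1,2}_1$ of norm $\lesssim \|G\|_{T^{\infty,2}}$ by the same Cauchy–Schwarz estimate and density of $\calS$, hence $u_0 \in (\dot F^{1,2}_1)' = BMO^{-1}$ with $\|u_0\|_{BMO^{-1}} \lesssim \inf \|G\|_{T^{\infty,2}}$. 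Combining both bounds gives the claimed norm equivalence.

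The main obstacle — and the point requiring care rather than cleverness — is the bookkeeping between the several realizations of the homogeneous spaces: checking that the functional obtained is genuinely the tempered-distribution $u_0$ (not merely some functional agreeing with it on a dense subspace), that the density of $\calS$ in $\dot F^{1,2}_1$ (\cite[Theorem 2.3.3]{Triebel}) is compatible with the tent-space embedding being an isomorphism onto $\calV$, and that the integral in \eqref{eq:bmo-1} extends continuously from $\calS$ first to $\dot W^{1,2}\cap\dot F^{1,2}_1$ and then to all of $\dot F^{1,2}_1$ — the last extension being automatic once one knows the functional is bounded for the $\dot F^{1,2}_1$ norm and $\calS$ is dense. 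The analytic inputs (the $H^{1,1}_{L^*}$ square function bound, the Kato estimate $\dot W^{1,2} \cong \{L^{*1/2}h\in L^2\}$, and the tent-space duality) are all quoted, so no genuinely new estimate is needed.
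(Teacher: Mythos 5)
Your proposal is correct and coincides with the paper's intended argument: the paper disposes of this lemma in one line as ``a straightforward consequence of the definition of $BMO^{-1}_{L}(\R^n)$ and its identification with $BMO^{-1}(\R^n)$,'' and the content behind that sentence is precisely what you spell out — the (quasi-)isometric embedding $h\mapsto sL^*e^{-sL^*}s^{-1/2}{L^*}^{1/2}h$ of $H^{1,1}_{L^*}$ into $T^{1,2}$, a Hahn--Banach extension, the duality $(T^{1,2})'=T^{\infty,2}$ for the representation by $G$, and Cauchy--Schwarz in the tent-space pairing for absolute convergence and the converse bound. No substantive difference in route; your version simply makes explicit the bookkeeping the authors left implicit.
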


\begin{proof} This is a straightforward consequence of the definition of $BMO^{-1}_{L}(\R^n)$ and its identification with $BMO^{-1}(\R^n)$. 
\end{proof}

Thus, we may introduce the map $S: T^{\infty, 2} \to BMO^{-1}, G\mapsto \int_{0}^\infty s^{-1/2}{L}^{1/2}sLe^{-sL} G(s,\, .\, )\, ds$ defined by \eqref{eq:bmo-1}, which is bounded and onto. 

\begin{Lemma} 
The map $V: T^{\infty, 2} \to T^{\infty,2}$, $G\mapsto H$ with 
$$H(t,\,.\,)=\int_{0}^\infty s^{-1/2}{L}^{1/2} sLe^{-(t+s)L}G(s,\, .\, )\, ds$$ is bounded. 
\end{Lemma}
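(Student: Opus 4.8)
The plan is to recognize $V$ as a composition of two operators that have already appeared (or are very close to operators that have appeared) in the Navier--Stokes part of the paper, now for the operator $L$ in place of $-\Delta$. Write
$$
H(t,\,.\,)=\int_0^\infty s^{-1/2}L^{1/2}\,sLe^{-(t+s)L}G(s,\,.\,)\,ds
= e^{-tL/2}\int_0^\infty \Big((t+s)/2\Big)^{-1}\,(t+s)Le^{-(t+s)L/2}\,s^{1/2}L^{1/2}e^{-sL/2}\,s^{-1}\,G(s,\,.\,)\,ds,
$$
or, more simply, factor $e^{-(t+s)L}=e^{-tL}e^{-sL}$ and split the $s$-integral at $s=t$. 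The point is that $V$ is morally of the same type as the operator $\calA_2$ from Step 3(ii): an operator with kernel $e^{-(t+s)L}$ applied to a "divergence-type" operator in the $s$-variable, here $s^{1/2}L^{1/2}e^{-sL}$ (playing the role of $\sqrt s\,\nabla\Pb e^{s\Delta}$ rescaled), composed with a bounded functional-calculus operator $s^{-1/2}L^{1/2}\cdot sL$ acting after the semigroup. So I would prove boundedness by the same dualization/Hardy-space strategy, or — since both $L^2$-$L^\infty$ off-diagonal bounds and $L^2$ boundedness are available — by a direct Schur-type plus off-diagonal argument as in Lemma~\ref{Lemma-errorterm}.

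Concretely, first I would establish $L^2(\R^{n+1}_+)\to L^2(\R^{n+1}_+)$ boundedness of $V$ via Schur's lemma. The kernel operator is $\widetilde K(t,s):=s^{-1/2}L^{1/2}\,sLe^{-(t+s)L}$; using analyticity of the semigroup and the boundedness of the $H^\infty$-calculus of $L$ on $L^2$ (equivalently, the square function bounds coming from $\|L^{1/2}h\|_2\sim\|\nabla h\|_2$ and Lemma~\ref{kernel-est-L}), one gets $\|\widetilde K(t,s)\|_{L^2\to L^2}\lesssim s^{-1/2}(t+s)^{-1/2}\cdot\frac{s}{t+s}\lesssim s^{-1/2}(t+s)^{-1/2}$, and the same power-weight test function $p(t)=t^\beta$ with $\beta\in(-1/2,0)$ used in Lemma~\ref{Lemma-errorterm} closes the Schur estimate, simultaneously giving meaning to the integral. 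Then, for the $T^{\infty,2}$ estimate, I would localize exactly as in the proof of Lemma~\ref{Lemma-errorterm}: fix a Whitney ball $B(x_0,\sqrt r)$ and $t\in(0,r)$, decompose $G=\sum_j G_j$ over parabolic annuli $C_j$ of the Carleson box $B_j=(0,2^jr)\times B(x_0,\sqrt{2^jr})$, treat the near terms ($j\le 2$) by the $L^2$-boundedness just proved, and the far terms ($j\ge 3$) by the Gaussian (hence arbitrarily fast polynomial) $L^2$-$L^\infty$ off-diagonal decay of $e^{-tL}$ from Lemma~\ref{kernel-est-L}(i), which gives a factor $(1+(t+s)^{-1/2}\dist)^{-N}$ for every $N$; splitting $C_j=C_j^{(0)}\cup C_j^{(1)}$ into its spatially-far and temporally-far pieces as in that lemma, one extracts geometric gains $2^{-j\delta/2}$ and $2^{-j/2}$ and sums. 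The needed off-diagonal bound for $\widetilde K(t,s)$ follows from composing the Gaussian bound for $e^{-(t+s)L/2}$ with the $L^2$-boundedness of the analytic/functional-calculus piece $s^{-1/2}L^{1/2}sLe^{-(t+s)L/2}$, whose operator norm is $\lesssim s^{-1/2}(t+s)^{-1/2}$.

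The main obstacle I expect is bookkeeping with the extra factor $s^{-1/2}L^{1/2}$ — it is a \emph{negative} power of $s$, so unlike the genuinely regularizing $sL$ term it must be absorbed against positive powers coming from the other factors and from the $s$-integration, and one must verify there is enough room. This is exactly why the Schur exponent $\beta$ is confined to $(-1/2,0)$: the product $\widetilde K(t,s)$ has $L^2\to L^2$ norm with the \emph{same} homogeneity $s^{-1/2}(t+s)^{-1/2}$ as the kernel $K(t,s)$ of $\calR$ in Lemma~\ref{Lemma-errorterm}, so the argument there transfers verbatim; the only real work is checking that $\|s^{-1/2}L^{1/2}\,sLe^{-(t+s)L}\|_{L^2\to L^2}\lesssim s^{-1/2}(t+s)^{-1/2}$ and the corresponding off-diagonal version, both of which reduce to the boundedness of $\theta(L)$ for the bounded holomorphic function $\theta(z)=z^{1/2}(t+s)^{-1/2}\,(t+s)z\,e^{-(t+s)z}$ on a sector, i.e. to the $H^\infty$-calculus for the maximal accretive operator $L$ on $L^2$. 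Once these two estimates are in hand, the remainder of the proof is a transcription of Lemma~\ref{Lemma-errorterm} with $e^{-(t+s)L}$ in place of $e^{(t+s)\Delta}$ and Gaussian decay in place of the order-$(n+1)$ polynomial decay, so I would state it that way and refer back rather than rewriting the annular sum.
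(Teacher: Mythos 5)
You have the paper's overall strategy exactly right (Schur test for the $L^2(\R^{n+1}_+)$ bound, then $L^2$--$L^\infty$ off-diagonal estimates and the annular decomposition of Lemma \ref{Lemma-errorterm}), but there is a genuine gap at the decisive point: you compute the correct operator bound $\|s^{-1/2}L^{1/2}\,sLe^{-(t+s)L}\|_{L^2\to L^2}\lesssim s^{-1/2}(t+s)^{-1/2}\tfrac{s}{t+s}$ and then discard the factor $\tfrac{s}{t+s}$, asserting that the homogeneity $s^{-1/2}(t+s)^{-1/2}$ is the same as for $\calR$ so that the argument "transfers verbatim". It does not: in Lemma \ref{Lemma-errorterm} the kernel carries the indicator $\Eins_{(t,\infty)}(s)$, and the cut at $s>t$ is what makes the first Schur integral $\int_0^s s^{-1/2}t^{-1/2+\beta}\,dt$ finite. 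Here the $s$-integration runs over all of $(0,\infty)$, and with only the bound $s^{-1/2}(t+s)^{-1/2}$ the Schur test cannot close: the condition $\int_0^\infty s^{-1/2}(t+s)^{-1/2}t^{\beta}\,dt\lesssim s^{\beta}$ forces $\beta<-\tfrac12$, while the dual condition forces $\beta>-\tfrac12$; equivalently, the homogeneous scalar kernel $s^{-1/2}(t+s)^{-1/2}$ defines an unbounded operator on $L^2(0,\infty)$. This is not a removable technicality of the method: the Proposition in Section \ref{sec:comments} shows that $\calA_2$, whose kernel satisfies precisely these bounds without the extra smoothing factor, is genuinely unbounded on $T^{2,2}$ and on $T^{\infty,2}$, so any proof that throws away the gain at $s\ll t$ must fail.

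The repair is the estimate you already wrote down before weakening it: keep the factor $\tfrac{s}{s+t}$, i.e. $\|\widetilde K(t,s)\|_{L^2\to L^2}\lesssim s^{1/2}(t+s)^{-3/2}$, and retain it as well in the $L^2$--$L^\infty$ off-diagonal bounds of the form \eqref{L2-Linfty-est-M-}. With this factor both Schur conditions hold for $\beta\in(-\tfrac12,0)$, and the near/far decomposition of Lemma \ref{Lemma-errorterm} goes through after redoing the $s$-integrals over the full range $(0,\infty)$; the regime $s\le t$, which simply does not occur in Lemma \ref{Lemma-errorterm}, is exactly where $\tfrac{s}{s+t}$ supplies the needed convergence. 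This is precisely the paper's proof ("off-diagonal estimates with extra multiplicative factor $\tfrac{s}{s+t}$, needed to allow integration on the full interval"), so your proposal is repairable with material you already have, but as written the claims that the weaker bound suffices and that the earlier argument transfers verbatim are incorrect.
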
 

\begin{proof} The proof is analogous to that of Lemma \ref{Lemma-errorterm}. One proves the $T^{2,2}$ boundedness first using the Schur test. Next, one has $L^2$-$L^\infty$ estimates like \eqref{L2-Linfty-est-M-}  with extra multiplicative factor $\frac{s}{s+t}$ for the operator-valued kernel
$K(t,s)= s^{-1/2}{L}^{1/2}sLe^{-(s+t)L}$  compared to the one in Lemma \ref{Lemma-errorterm} (this is needed to allow integration on the full interval $(0,\infty)$).     This suffices to run the same argument as for $\calR$. 
\end{proof}

\begin{Cor}\label{cor:bmoL} Let $u_{0}\in BMO^{-1}(\R^n)$.
\begin{enumerate}
  \item For each $t>0$, $ e^{-tL}u_{0}\in BMO^{-1}(\R^n)$ with $\langle e^{-tL}u_{0}, h \rangle= \langle u_{0}, e^{-tL^*}h \rangle$ say for each $h\in \calS(\R^n)$, $\|e^{-tL}u_{0} \|_{BMO^{-1}} \le C \|u_{0}\|_{BMO^{-1}}$ uniformly and we have the semigroup property  $e^{-(s+t)L}u_{0}= e^{-sL}(e^{-tL}u_{0})$ for any $s,t>0$.
  \item $t\mapsto  e^{-tL}u_{0}$ belongs to $C^\infty(0,\infty; BMO^{-1}(\R^n))$ and is  a strong solution in $(0,\infty)$ of $\partial_{t}u + Lu=0$. 
  \item  $e^{-\varepsilon L}u_{0}\to u_{0}$ weak-$*$ as $\varepsilon \to 0$.
  \item Moreover, 
$u(t,x):=e^{-tL}u_{0}(x) \in T^{\infty, 2}$ and
$\|u\|_{T^{\infty,2}}\lesssim \|u_{0}\|_{BMO^{-1}}$.
  
\end{enumerate}     \end{Cor}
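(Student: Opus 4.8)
\textbf{Proof plan for Corollary \ref{cor:bmoL}.}

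The plan is to build everything on the representation \eqref{eq:bmo-1}: if $u_0 \in BMO^{-1}(\R^n)$, fix $G \in T^{\infty,2}$ with $\|G\|_{T^{\infty,2}} \lesssim \|u_0\|_{BMO^{-1}}$ realizing \eqref{eq:bmo-1}, i.e. $u_0 = S(G)$. For (1), the key point is that, formally, $e^{-tL}u_0 = e^{-tL}S(G) = S(H_t)$ where $H_t(s,\,.\,) := e^{-tL}G(s,\,.\,)$ is again in $T^{\infty,2}$ with norm controlled by $\|G\|_{T^{\infty,2}}$ (this is essentially Corollary \ref{cor:bmoL}(4) applied pointwise in $s$, or more directly the uniform $L^2$-boundedness plus Gaussian off-diagonal bounds of $e^{-tL}$ from Lemma \ref{kernel-est-L}(i) combined with the proof scheme of Lemma \ref{boundedness-calT}). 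To make this rigorous one pairs against $h \in \calS(\R^n)$: one must check that $\langle u_0, e^{-tL^*}h\rangle$, computed via \eqref{eq:bmo-1}, equals the integral of $G(s,y)$ against $s L^* e^{-sL^*}s^{-1/2}{L^*}^{1/2}(e^{-tL^*}h)(y)$, and that $e^{-tL^*}h$ lies in $\dot W^{1,2} \cap \dot F^{1,2}_1$ so the integral is legitimate — this uses the analyticity and decay of the $L^*$-semigroup together with the density/duality statements in the Lemma preceding the corollary. Moving $e^{-tL^*}$ onto $G$ via the kernel and commuting with $sL^*e^{-sL^*}s^{-1/2}{L^*}^{1/2}$ (all these commute, being functions of $L^*$) then exhibits $e^{-tL}u_0$ as $S(\text{something in }T^{\infty,2})$, giving both membership in $BMO^{-1}$ and the uniform bound; the semigroup property $e^{-(s+t)L}u_0 = e^{-sL}(e^{-tL}u_0)$ then follows from the semigroup property on $L^2$-scale objects and uniqueness of the pairing against the dense class $\calS(\R^n)$.

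For (2), once $e^{-tL}u_0$ is identified as a genuine element of $BMO^{-1}(\R^n)$ depending on $t$, smoothness in $t$ with values in $BMO^{-1}$ follows from differentiating under the pairing: $\partial_t \langle e^{-tL}u_0, h\rangle = -\langle e^{-tL}u_0, L^* h\rangle = -\langle L e^{-tL}u_0, h\rangle$, where one justifies bringing $\partial_t$ inside using the analyticity of the $L^*$-semigroup on the dense class and the uniform $BMO^{-1}$-bounds just established (together with the fact that $t L e^{-tL}$ obeys the same type of $T^{\infty,2}$-estimate, so $L e^{-tL}u_0 \in BMO^{-1}$ with norm $\lesssim t^{-1}\|u_0\|_{BMO^{-1}}$). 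Iterating gives $C^\infty(0,\infty;BMO^{-1})$ and simultaneously identifies $\partial_t u = -Lu$ in the strong ($BMO^{-1}$-valued) sense, which is the assertion of (2). For (3), weak-$*$ convergence $e^{-\varepsilon L}u_0 \to u_0$ as $\varepsilon \to 0$: test against $h \in \calS(\R^n)$, so one needs $\langle u_0, e^{-\varepsilon L^*}h\rangle \to \langle u_0, h\rangle$; since $e^{-\varepsilon L^*}h \to h$ in $\dot F^{1,2}_1(\R^n) \cong \dot H^{1,1}$ as $\varepsilon\to 0$ (an $L^2$-convergence statement for ${L^*}^{1/2}(e^{-\varepsilon L^*}h - h)$, which is standard strong continuity at $0$ of the analytic semigroup on $L^2$, using ${L^*}^{1/2}h \in L^2$ from \cite{AHLMcT}), and $u_0$ is a bounded functional on that space, the convergence follows.

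Finally (4) is almost immediate from the machinery: $u(t,x) = e^{-tL}u_0(x)$ should be read as $S$ applied — but more transparently, writing $u_0 = S(G)$ and using the Calderón-type reproducing identity $\mathrm{id} = c\int_0^\infty (sL)^{1/2} e^{-sL}(sL^*)^{1/2}e^{-sL^*}\frac{ds}{s}$-style resolution (i.e. the very factorization appearing in \eqref{eq:bmo-1}), one expresses $e^{-tL}u_0$ pointwise and estimates its $T^{\infty,2}$-norm directly; alternatively, and most cheaply, one invokes that the map $G \mapsto (t,\,.\,)\mapsto \int_0^\infty s^{-1/2}L^{1/2}sL e^{-(t+s)L}G(s,\,.\,)\,ds$ is bounded on $T^{\infty,2}$ (the Lemma immediately preceding this corollary, the map $V$) and observe $e^{-tL}u_0 = V(G)(t,\,.\,)$ by the same commutation of functions of $L$ used in (1); hence $\|u\|_{T^{\infty,2}} = \|V(G)\|_{T^{\infty,2}} \lesssim \|G\|_{T^{\infty,2}} \lesssim \|u_0\|_{BMO^{-1}}$.

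The main obstacle I expect is not any single estimate — the $T^{\infty,2}$-boundedness statements are already provided (Lemma on $V$, Lemma \ref{boundedness-calT}, Lemma \ref{kernel-est-L}) — but the bookkeeping in (1): rigorously justifying that $e^{-tL}$ "passes through" the abstract functional $S$, i.e. that $\langle e^{-tL}u_0, h\rangle := \langle u_0, e^{-tL^*}h\rangle$ is well-defined for $h \in \calS(\R^n)$ (one must know $e^{-tL^*}h$ stays in the form domain $\dot F^{1,2}_1 \cong \dot W^{1,1}$ on which $u_0$ acts, which requires the $L^*$-semigroup to map Schwartz functions into this Triebel-Lizorkin space with the relevant continuity — guaranteed by Lemma \ref{kernel-est-L} giving $p_-(L^*)=1$ and the $\dot F^{1,2}_1$-realization discussed before the corollary) and that the resulting assignment genuinely coincides with $S$ of an explicit $T^{\infty,2}$-datum so that the uniform $BMO^{-1}$-bound and the semigroup law transfer cleanly. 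Once this identification is in place, parts (2)–(4) are routine consequences of semigroup analyticity and the boundedness of $V$.
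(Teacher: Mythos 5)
Items (1), (2) and (4) of your plan are essentially sound and close in spirit to the paper. For (4) you end up with exactly the paper's argument: write $u_0=SG$ and identify $e^{-tL}u_0$ with $VG$, then use the boundedness of $V$ on $T^{\infty,2}$. For (1), your route — commuting $e^{-tL}$ through $S$ slicewise, i.e. $e^{-tL}SG=S(H_t)$ with $H_t(s,\,.\,)=e^{-tL}G(s,\,.\,)$, and checking that a fixed $e^{-tL}$ acts boundedly on $T^{\infty,2}$ slice by slice — is a workable variant of the paper's argument (which instead extends the $H^\infty$ calculus of $L^\ast$ from $L^2$ to $H^{1,1}_{L^\ast}$ and dualizes to get the calculus, hence the semigroup, on $BMO^{-1}_L$). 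Note, however, that even in your version the well-definedness of $\langle u_0, e^{-tL^\ast}h\rangle$ for $h\in\calS(\R^n)$ requires $e^{-tL^\ast}h\in \dot F^{1,2}_1(\R^n)\cong H^{1,1}_{L^\ast}(\R^n)$, i.e. boundedness of the semigroup on this Hardy--Sobolev space; that is not contained in Lemma \ref{kernel-est-L} alone but is precisely the functional-calculus extension of \cite{HMMc} that the paper invokes, so you have not really bypassed it.

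The genuine gap is in item (3). You claim that $e^{-\varepsilon L^\ast}h\to h$ in $\dot F^{1,2}_1\cong \dot H^{1,1}$ because ${L^\ast}^{1/2}(e^{-\varepsilon L^\ast}h-h)\to 0$ in $L^2$, i.e. "standard strong continuity of the analytic semigroup on $L^2$". This does not suffice: the norm of the predual $H^{1,1}_{L^\ast}$ is the $T^{1,2}$ norm of the conical square function in \eqref{eq:h11L}, an $L^1_x$-based quantity, whereas $L^2$ convergence of ${L^\ast}^{1/2}(\cdot)$ only gives convergence in $\dot W^{1,2}$, which has a different scaling and neither embeds into nor controls $\dot F^{1,2}_1\cong\dot H^{1,1}$. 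So your argument does not yield $\langle u_0, e^{-\varepsilon L^\ast}h-h\rangle\to 0$. This convergence in the $H^1$-type norm is exactly the nontrivial content of item (3) in the paper: there one first reduces, by the uniform boundedness of $(e^{-\varepsilon L^\ast})$ on $H^{1,1}_{L^\ast}$ and density, to $h$ in the pre-completion $\dot W^{1,2}$, then uses the equivalent square-function norms with $(tL^\ast)^k e^{-tL^\ast}$, $k\ge 1$ (\cite[Corollary 4.17]{HMMc}) and follows the argument of \cite[Proposition 4.5]{AS} to obtain strong convergence in $H^{1,1}_{L^\ast}$. Your preliminary reduction (uniform $BMO^{-1}$ bounds from (1) plus density of $\calS$ in the predual) is fine; what is missing is an actual proof of the strong convergence in $H^{1,1}_{L^\ast}$, and replacing it by an $L^2$ statement is a step that fails.
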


\begin{proof}
 By construction of $H^{1,1}_{L^\ast}(\R^n)$,  the $H^\infty$-functional calculus of $L^*$ on $L^2(\R^n)$ extends to $H^{1,1}_{L^\ast}(\R^n)$: first defined on $L^2(\R^n)$, it has a first extension to 
	 $ \dot W^{1,2}(\R^n)$ thanks to  \cite{AHLMcT} and next to $H^{1,1}_{L^\ast}(\R^n)$. By duality,   $L$ has $H^\infty$-functional calculus on $BMO^{-1}_{L}(\R^n)$ and in particular we obtain item (1) using the identification.  Item (2) is then an easy consequence of semigroup theory in Banach spaces. 

Item (3) is proved by duality provided one can show  strong convergence $e^{-\varepsilon L^*}h\to h$ as $\varepsilon\to 0$ in $H^{1,1}_{L^*}(\R^n)$.  By density and the uniform boundedness of the  semigroup in $H^{1,1}_{L^*}(\R^n)$, it suffices to assume $h\in \dot W^{1,2}(\R^n)$ for which  \eqref{eq:h11L} is finite. But the theory of \cite{HMMc} allows one to change $tL^*e^{-tL^*}$ by $(tL^*)^ke^{-tL^*}$ for any integer $k\ge 1$ and to have an equivalent norm for the pre-complete space (see in particular Corollary 4.17 there). Now, one can follow the proof of \cite[Proposition 4.5]{AS} given in a different but similar context to show the strong convergence. We skip details. 

To prove item (4), 
pick $G$ such that $u_{0}=SG$.  It remains to see that $VG(t,x)= (e^{-tL}u_{0})(x)$ for example in the distributions in $\R^{n+1}_{+}$ since we can see both functions as distributions. Pick a test function in the form $\varphi \otimes h(t,x)=  h(x)\varphi(t)$. Then (using sesquilinear forms)
\begin{align*}
\label{}
\langle VG, \varphi\otimes h \rangle    &=  \langle G,  V^{*}( \varphi\otimes h) \rangle  \\
    & = \iiint G(s,y)  \overline{sL^*e^{-sL^*}s^{-1/2}{L^*}^{1/2}(e^{-tL^*}h)(y) \varphi(t)} dsdydt\\
    & = \int \langle u_{0}, e^{-tL^*}h \rangle \overline\varphi(t)\, dt\\
    & = \int \langle e^{-tL} u_{0}, h \rangle \overline\varphi(t)\, dt\\
    &= \langle u, \varphi\otimes h \rangle.
   \end{align*}
   Each line can be appropriately justified and we leave details to the reader. 
 \end{proof}

Remark that $e^{-tL}u_{0}(x)$ is not defined by integration against the kernel $w_{t}(x,y)$ in Lemma \ref{kernel-est-L}. Nevertheless, one has the following properties. 

\begin{Lemma} \label{lem:def-u0}
 Let $u_{0}\in BMO^{-1}(\R^n)$.  Then 
 $t\mapsto e^{-tL}u_{0} \in C^{\infty}(0,\infty; L^2_{loc}(\R^n))$ and   
$$e^{-(t+s)L}u_{0} (x)= \int_{\R^n} w_{s}(x,y) e^{-tL}u_{0}(y)\, dy$$ for  almost every $t,s>0$ and $x\in \R^n$. As a consequence, $(t,x)\mapsto e^{-tL}u_{0}(x)$ is (almost everywhere equal to) a locally bounded and H\"older continuous function (to which it is now identified). 
\end{Lemma}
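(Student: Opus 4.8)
The plan is to read off the free evolution from the explicit representation in Corollary~\ref{cor:bmoL}(4) and combine it with the Gaussian and De~Giorgi--Nash bounds of Lemma~\ref{kernel-est-L}(i). The explicit formula is the essential entry point here: the Carleson control in Corollary~\ref{cor:bmoL}(4) does not, by itself, give pointwise or uniform-in-$x$ information about $e^{-tL}u_{0}$ at a single time.

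\emph{Step 1 (pointwise bound at fixed time).} I would fix $G\in T^{\infty,2}(\R^{n+1}_+)$ with $u_{0}=SG$, so that $e^{-tL}u_{0}=VG(t,\,.\,)$ on $\R^{n+1}_{+}$ by (the proof of) Corollary~\ref{cor:bmoL}(4), where $VG(t,\,.\,)=\int_{0}^{\infty}\Theta_{t,s}G(s,\,.\,)\,ds$ with $\Theta_{t,s}:=s^{1/2}L^{3/2}e^{-(t+s)L}$. By Lemma~\ref{kernel-est-L}(i) together with analyticity of the semigroup, $\Theta_{t,s}$ has an integral kernel $\theta_{t,s}$ with $|\theta_{t,s}(x,y)|\lesssim s^{1/2}(t+s)^{-\frac n2-\frac32}\exp(-c(t+s)^{-1}|x-y|^{2})$. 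A routine dyadic decomposition of the $s$-integral with respect to $t$, an annular decomposition of $\R^n$ around $x$ at scale $\sqrt{t+s}$, and Cauchy--Schwarz against the Carleson bound $\int_{0}^{r}\int_{B(x,\sqrt r)}|G|^{2}\leq r^{n/2}\|G\|_{T^{\infty,2}}^{2}$ then give $|VG(t,x)|\lesssim t^{-1/2}\|G\|_{T^{\infty,2}}\lesssim t^{-1/2}\|u_{0}\|_{BMO^{-1}}$ for every $(t,x)$, with constant independent of $x$. Since $t\mapsto VG(t,x)$ is continuous for fixed $x$ (dominated convergence with the above kernel bound) while $t\mapsto e^{-tL}u_{0}$ is continuous into $\calS'(\R^n)$ by Corollary~\ref{cor:bmoL}(2), and the two agree for a.e.\ $t$, I conclude $e^{-tL}u_{0}=VG(t,\,.\,)\in L^{\infty}(\R^n)$ for \emph{every} $t>0$, with $\|e^{-tL}u_{0}\|_{\infty}\lesssim t^{-1/2}\|u_{0}\|_{BMO^{-1}}$.

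\emph{Step 2 (reproducing formula and regularity).} The semigroup law on $L^{2}$ gives $\Theta_{t+s,\sigma}=e^{-sL}\Theta_{t,\sigma}$, i.e.\ $\theta_{t+s,\sigma}(x,y)=\int w_{s}(x,z)\theta_{t,\sigma}(z,y)\,dz$; inserting this into the integral defining $VG(t+s,x)$ and moving the $z$-integration outside (Fubini, legitimate because $\int|w_{s}(x,z)|\,|VG(t,z)|\,dz\lesssim t^{-1/2}\|G\|_{T^{\infty,2}}\int|w_{s}(x,z)|\,dz<\infty$ by Step~1 and the Gaussian bound on $w_s$) yields $e^{-(t+s)L}u_{0}(x)=\int_{\R^n} w_{s}(x,y)\,e^{-tL}u_{0}(y)\,dy$ for all $s,t>0$ and all $x$. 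For H\"older continuity in $x$ I apply this with $s=t/2$, using $e^{-(t/2)L}u_{0}\in L^{\infty}(\R^n)$ and the gradient-free De~Giorgi--Nash estimate $\int_{\R^n}|w_{\tau}(x,y)-w_{\tau}(x',y)|\,dy\lesssim(\tau^{-1/2}|x-x'|)^{\alpha}$ for $|x-x'|\le\sqrt\tau$, which is part of Lemma~\ref{kernel-est-L}(i) (see \cite{Auscher96}); crucially one must not differentiate $e^{-tL}$ in $x$, since $\nabla e^{-tL}$ has no pointwise bound. For smoothness in $t$, fix $t_{0}>0$ and write $e^{-tL}u_{0}=\int w_{t-t_{0}}(\,\cdot\,,y)\,e^{-t_{0}L}u_{0}(y)\,dy$ for $t>t_{0}$; since $\partial_{\tau}^{k}w_{\tau}$ is the kernel of $(-L)^{k}e^{-\tau L}$ and satisfies $|\partial_{\tau}^{k}w_{\tau}(x,y)|\le C_{k}\tau^{-k-n/2}\exp(-c\tau^{-1}|x-y|^{2})$ (analyticity plus Lemma~\ref{kernel-est-L}(i)), differentiation under the integral sign gives $\partial_{t}^{k}e^{-tL}u_{0}\in L^{\infty}(\R^n)$, continuous in $t\in(t_{0},\infty)$; letting $t_{0}\downarrow0$ gives $t\mapsto e^{-tL}u_{0}\in C^{\infty}(0,\infty;L^{2}_{\loc}(\R^n))$. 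Finally, a function that is $C^{1}$ in $t$ and locally uniformly H\"older in $x$ is jointly continuous, and the bound $|e^{-tL}u_{0}(x)|\lesssim t^{-1/2}\|u_{0}\|_{BMO^{-1}}$ makes it locally bounded on $\R^{n+1}_{+}$; this is the claimed representative.

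The one genuinely delicate point is Step~1: one cannot extract a uniform-in-$x$ (let alone pointwise) bound for $e^{-tL}u_{0}$ at a single time by abstract arguments from the Carleson estimate, so it is essential to go through the explicit formula $e^{-tL}u_{0}=VG(t,\,.\,)$ and run the dyadic--annular summation against the kernel bounds. The remaining bookkeeping --- reconciling the abstract $BMO^{-1}$-semigroup with its kernel representation, and upgrading a.e.-$t$ identities to all $t$ --- is routine given the continuity statements already in Corollary~\ref{cor:bmoL}.
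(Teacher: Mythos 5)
Your proposal is correct in substance, but it takes a genuinely different route from the paper. The paper never extracts a pointwise bound at a single time: it obtains $t\mapsto e^{-tL}u_{0}\in C^{\infty}(0,\infty;L^{2}_{\loc})$ by repeating the analysis of Corollary \ref{cor:bmoL}\,(4) with $e^{-tL}$ replaced by $(tL)^{m}e^{-tL}$ (so that all time derivatives lie in $T^{\infty,2}$, hence in $L^{2}_{\loc}$, and $e^{-tL}u_{0}\in L^{2}_{\uloc}$), and it proves the reproducing formula by duality: $\skp{e^{-(t+s)L}u_{0},h}=\skp{e^{-tL}u_{0},e^{-sL^{*}}h}$ for $h\in\calS(\R^{n})$, the kernel representation of $e^{-sL^{*}}h$, the averaged bound $\int_{a}^{2a}\barint_{B}|e^{-tL}u_{0}|\lesssim\sqrt{a}\,\|u_{0}\|_{BMO^{-1}}$ and Fubini, yielding the identity for a.e.\ $t,s,x$; local boundedness and H\"older continuity then follow from the formula. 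You instead go through the explicit representation $e^{-tL}u_{0}=VG(t,\cdot)$ and prove the stronger statement $\sup_{x}|e^{-tL}u_{0}(x)|\lesssim t^{-1/2}\|u_{0}\|_{BMO^{-1}}$ for every $t$, from which the reproducing formula (for all $t,s,x$), the H\"older regularity and the $t$-smoothness all follow by elementary kernel manipulations. This is legitimate and in fact re-derives, independently of the lemma, the $2p=\infty$ case of Corollary \ref{cor:init-cond} (which in the paper is deduced \emph{from} this lemma via Lemma \ref{lemma:revH}), so your argument is not circular; what the paper's route buys is that it only ever uses the Gaussian bounds on $w_{t}$ and averaged ($T^{\infty,2}$-type) information, never any pointwise bound for the fractional-power operator.

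One technical point needs repair. You assert that $\Theta_{t,s}=s^{1/2}L^{3/2}e^{-(t+s)L}$ has a kernel with Gaussian decay "by Lemma \ref{kernel-est-L}\,(i) together with analyticity". Analyticity gives Gaussian kernel bounds only for integer powers $(\tau L)^{k}e^{-\tau L}$; the fractional power must be handled, e.g., by subordination, $L^{3/2}e^{-\tau L}=c\int_{0}^{\infty}u^{-1/2}L^{2}e^{-(u+\tau)L}\,du$, and this yields a kernel bound of the form $\tau^{-\frac n2-\frac32}\bigl(1+\tau^{-1/2}|x-y|\bigr)^{-n-3}$ (polynomial, not Gaussian, decay). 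Fortunately decay of order $n+3$ is more than enough for your dyadic-in-$s$/annular-in-$x$ summation against the Carleson condition on $G$ (each annulus contributes a factor $2^{-3j}$ after the $2^{jn}$ loss from measure and Carleson normalisation), so the estimate $|VG(t,x)|\lesssim t^{-1/2}\|G\|_{T^{\infty,2}}$ and the absolute convergence needed for your Fubini step survive with this corrected bound; similarly, the Fubini justification in Step 2 should be phrased on the triple integral with absolute values (which your Step 1 computation does control), rather than on $\int|w_{s}(x,z)|\,|VG(t,z)|\,dz$. With these adjustments the argument is complete, and indeed proves slightly more than the lemma claims.
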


\begin{proof} Using the same analysis, one can replace $e^{-tL}$ by $(tL)^m e^{-tL}= (-1)^m t^m\partial_{t}^m e^{-tL}$ for each positive integer $m$  and obtain that $t^m\partial_{t}^m e^{-tL}u_{0}(x)$ exists for all $m$ in $T^{\infty,2}$, hence in $L^2_{loc}(\R^{n+1}_{+})$. Thus, we may see $t\mapsto e^{-tL}u_{0}$ in $ C^{\infty}(0,\infty; L^2_{loc}(\R^n))$ and furthermore the integrals $\int_{B} |e^{-tL}u_{0}(x)|^2\, dx $ depend on the size of the ball $B$, not its location (thus we may see $e^{-tL}u_{0}$ in $L^2_{uloc}$).  

 To show the integral representation for  $e^{-(s+t)L}u_{0}(x)$,   we use for any $h\in \calS(\R^n)$, 
$$
\langle e^{-(s+t)L}u_{0}, h \rangle=  \langle e^{-tL}u_{0}, e^{-sL^*}h \rangle
$$
and then use the integral representation for $e^{-sL^*}h$ with the adjoint of  kernel of  ${w_{s}(x,y)}$.  Next,  we use for any $a>0$ and $z\in \R^n$, 
 $$
\int_{a}^{2a} \barint_{B(z,\sqrt {2a})} |e^{-tL}u_{0}(y)|dydt \lesssim \sqrt a \|u_{0}\|_{BMO^{-1}}
$$
and the estimates of Lemma \ref{kernel-est-L} together with the decay of $h$ to show that for any $a,s>0$  
$$
\int_{a}^{2a}\int_{\R^n} \int_{\R^n} |h(x)w_{s}(x,y) \,e^{-tL}u_{0}(y)|\, dydxdt <\infty
$$
hence the integral  $\int_{\R^n} w_{s}(x,y) e^{-tL}u_{0}(y)\, dy$ exists for all $s$ and almost every $t,x$ and by Fubini's theorem, for almost every $t>0$, 
$$
\langle e^{-tL}u_{0}, e^{-sL^*}h \rangle=  \int_{\R^n} \int_{\R^n} w_{s}(x,y) e^{-tL}u_{0}(y)\, dy\, \overline{h(x)}\, dx.
$$
The conclusion follows. 
\end{proof}

\begin{Prop}\label{rem:weak}
Let $u_{0}\in BMO^{-1}(\R^n)$. Then  $(t,x)\mapsto e^{-tL}u_{0}(x)$ is  a weak solution  of the parabolic equation $\partial_{t}u - \div A \nabla u= 0$ (with $u$ and $\nabla_{x}u $ in $ L^2_{\loc}$ in space-time). 
\end{Prop}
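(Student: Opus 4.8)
The plan is to use the semigroup law to replace the distributional datum $u_{0}$ by a genuinely (locally) square integrable function, to truncate that function in space so as to bring in the classical $L^{2}(\R^{n})$ Cauchy theory for $\partial_{t}+L$, and then to pass to the limit using off-diagonal decay. Fix $\tau>0$ and set $w:=e^{-\tau L}u_{0}$. By Lemma~\ref{lem:def-u0}, $w\in L^{2}_{\uloc}(\R^{n})$, the semigroup identity $e^{-(t+\tau)L}u_{0}=e^{-tL}w$ holds, and $e^{-tL}$ acts on $w$ by integration against the Gaussian-bounded kernel $w_{t}$ of Lemma~\ref{kernel-est-L}(i). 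Since $\tau>0$ is arbitrary and every $\varphi\in C_{c}^{\infty}(\R^{n+1}_{+})$ has support inside some $(\tau,\infty)\times\R^{n}$, it suffices to prove that $(t,x)\mapsto e^{-tL}w(x)$ is a weak solution of $\partial_{t}v-\div A\nabla v=0$ on $(0,\infty)\times\R^{n}$.

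Next, localize: choose $\chi\in C_{c}^{\infty}(\R^{n})$ with $\chi\equiv 1$ on $B(0,1)$, set $\chi_{R}:=\chi(\cdot/R)$ and $w_{R}:=\chi_{R}w\in L^{2}(\R^{n})$. For genuine $L^{2}$ data, $v_{R}(t,\cdot):=e^{-tL}w_{R}$ lies in $C^{\infty}((0,\infty);L^{2}(\R^{n}))$, satisfies $v_{R}(t,\cdot)\in D(L)\subset\dot W^{1,2}(\R^{n})$ and $\partial_{t}v_{R}=-Lv_{R}$ in $L^{2}$ for $t>0$; using the form identity $\langle Lv,\psi\rangle=\int A\nabla v\cdot\nabla\psi$ valid for $v\in D(L)$ and $\psi\in\dot W^{1,2}(\R^{n})\supset C_{c}^{\infty}(\R^{n})$, together with the fundamental theorem of calculus in the time variable against $\varphi\in C_{c}^{\infty}(\R^{n+1}_{+})$, one obtains that $v_{R}$ is a weak solution of the homogeneous equation on $(0,\infty)\times\R^{n}$. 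This step is entirely classical.

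The substance is the passage to the limit $R\to\infty$ on a fixed compact cylinder $Q=[t_{0},t_{1}]\times B(0,\rho)\subset\R^{n+1}_{+}$. Write $v_{R}(t,\cdot)-e^{-tL}w=e^{-tL}\big((1-\chi_{R})w\big)$ and decompose $(1-\chi_{R})w$ into dyadic annuli $A_{k}=\{2^{k-1}R\le|x|<2^{k}R\}$, $k\ge 1$. Using $\|w\|_{L^{2}(A_{k})}\lesssim (2^{k}R)^{n/2}\|w\|_{L^{2}_{\uloc}}$, the Gaussian bounds for $w_{t}$, and the $L^{2}$--$L^{2}$ off-diagonal estimates for $\sqrt{t}\,\nabla e^{-tL}$ (which follow from Lemma~\ref{kernel-est-L}(ii)), one bounds, for $t\in[t_{0},t_{1}]$ and $R$ large, with constants depending only on $Q$,
\[
\|v_{R}(t,\cdot)-e^{-tL}w\|_{L^{2}(B(0,\rho))}+\|\nabla_{x}v_{R}(t,\cdot)-\nabla_{x}e^{-tL}w\|_{L^{2}(B(0,\rho))}\lesssim\sum_{k\ge 1}e^{-c4^{k}R^{2}/t_{1}}(2^{k}R)^{n}\|w\|_{L^{2}_{\uloc}},
\]
which tends to $0$ as $R\to\infty$, uniformly in $t$. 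This simultaneously shows that $e^{-tL}w$ and $\nabla_{x}e^{-tL}w$ belong to $L^{2}_{\loc}(\R^{n+1}_{+})$ (hence $u$ and $\nabla_{x}u$ are locally square integrable in space-time) and that $v_{R}\to e^{-tL}w$ and $\nabla_{x}v_{R}\to\nabla_{x}e^{-tL}w$ in $L^{2}(Q)$. Since $A\in L^{\infty}$, letting $R\to\infty$ in the weak formulation for $v_{R}$, whose test function is supported in $Q$, yields the weak formulation for $e^{-tL}w$, and thus the Proposition.

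I expect the only genuine obstacle to be the gradient convergence in the last step: one must extract real decay for $\nabla_{x}e^{-tL}$ acting on the far-away tail $(1-\chi_{R})w$ despite the absence of pointwise kernel bounds for the gradient of the semigroup, which is precisely where the off-diagonal bounds of Lemma~\ref{kernel-est-L}(ii) are used, and one must know that the local $L^{2}$ norms of $w=e^{-\tau L}u_{0}$ grow at most polynomially, i.e.\ $w\in L^{2}_{\uloc}$, which is furnished by Lemma~\ref{lem:def-u0}. Everything else reduces to the standard fact that the $L^{2}$ Cauchy problem for $\partial_{t}+L$ produces divergence-form weak solutions.
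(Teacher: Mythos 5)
Your proposal is correct, but it follows a genuinely different route from the paper's. The paper obtains $u\in L^2_{\loc}$ directly from the tent space bound $u\in T^{\infty,2}$ (Corollary \ref{cor:bmoL} (4)), and $\nabla_x u\in L^2_{\loc}$ by factoring $\nabla_x e^{-tL}u_0=\nabla_x e^{-(t/2)L}e^{-(t/2)L}u_0$ and invoking the $T^{\infty,2}$-boundedness of the resulting family (Lemma \ref{boundedness-calT} with $L^2$ off-diagonal bounds); the weak formulation is then verified by using the $C^1((0,\infty);L^2_{\loc})$ regularity of $t\mapsto e^{-tL}u_0$ from Lemma \ref{lem:def-u0} to integrate by parts in time, and by justifying $-\skp{\partial_t e^{-(t-\delta)L}u(\delta),\varphi}=\skp{A\nabla_x e^{-(t-\delta)L}u(\delta),\nabla_x\varphi}$ through differentiation under the integral sign in the kernel representation. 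You instead reduce, via the same Lemma \ref{lem:def-u0}, to $L^2_{\uloc}$ data $w=e^{-\tau L}u_0$, truncate to genuine $L^2$ data, use the classical $L^2$ Cauchy theory to produce weak solutions $v_R$, and pass to the limit on compact cylinders using the Gaussian bounds for $w_t$ and $L^2$-$L^2$ (Gaffney) off-diagonal bounds for $\sqrt{t}\,\nabla e^{-tL}$. Your route is more elementary and self-contained on the PDE side and would work for arbitrary $L^2_{\uloc}$ data, at the price of re-deriving local bounds that the paper gets for free from its tent-space machinery; the paper's argument is shorter given that machinery and avoids the truncation-and-limit step. Two details you should make explicit: the $L^2$-$L^2$ off-diagonal estimates for $\sqrt{t}\,\nabla e^{-tL}$ are the standard Gaffney estimates rather than a literal consequence of Lemma \ref{kernel-est-L} (ii) as stated (which is $L^1$-$L^q$), and your displayed estimate presupposes that $\nabla_x e^{-tL}w$ is already defined; the clean fix is to note that $(\nabla_x v_R)_R$ is Cauchy in $L^2(Q)$ by the same annular estimate, so its limit exists and is the distributional gradient of $\lim_R v_R=e^{-tL}w$.
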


\begin{proof}
To see this, note that by Corollary \ref{cor:bmoL} (4), $u \in T^{\infty,2}$, therefore in $L^2_{\loc}(\R^{n+1}_+)$. Then write $\nabla_x e^{-tL}u_0=\nabla_x e^{-(t/2)L} e^{-(t/2)L}u_0$ (justified by the previous lemma). Since $t^{1/2}\nabla_x e^{-(t/2)L}$ satisfies $L^2$ off-diagonal estimates, 
Lemma \ref{boundedness-calT} yields that this defines a bounded operator on $T^{\infty,2}$, thus $\nabla_x e^{-tL}u_0 \in L^2_{\loc}(\R^{n+1}_+)$.

It remains to show that  $e^{-tL}u_{0}(x)$ solves   the parabolic equation in the weak sense.  
Now suppose $\varphi \in \calD(\R^{n+1}_+)$.
Then $\varphi \in C^1((0,\infty);L^2_c(\R^n))$ with compact support in $(0,\infty)$, and $u:t\mapsto e^{-tL}u_0 \in C^1((0,\infty);L^2_{\loc}(\R^n))$ by Lemma \ref{lem:def-u0}, hence
\begin{align*}
	0 = \int_0^\infty (-\skp{u(t),\partial_t \varphi(t)}-\skp{\partial_t u(t),\varphi(t)})\,dt.
	%= \int_0^\infty (-\skp{u(t),\partial_t \varphi(t)} + \skp{A\nabla_xu(t),\nabla_x\varphi(t)}\,dt.
\end{align*}
It remains to justify 
$$
\int_0^\infty -\skp{\partial_t u(t),\varphi(t)}\, dt =\int_0^\infty  \skp{A\nabla_xu(t),\nabla_x\varphi(t)}\, dt,
$$
as both  terms can be expressed as double Lebesgue integrals. Fix $\delta >0$ and write for $t>\delta $,
$u(t)=e^{-(t-\delta )L} u(\delta )$ which can be computed from the kernel representation in Lemma \ref{lem:def-u0}. Thus, one can obtain 
$$
-\skp{\partial_t e^{-(t-\delta )L} u(\delta ),\varphi(t)}= \skp{A\nabla_xe^{-(t-\delta )L} u(\delta ),\nabla_x\varphi(t)}
$$
by differentiation under the integral sign and integration by parts. Details are completely routine and skipped. 
\end{proof}

We continue with the following auxiliary lemma. 

\begin{Lemma} \label{lemma:revH}
For $F \in T^{\infty,2}(\R^{n+1}_+)$ and $t>0$, set
\begin{equation} \label{eq:def-G}
	G(t, \cdot)= \barint_{\frac{t}{4}}^{\frac{t}{2}} e^{-(t-s)L}F(s, \cdot)\,ds.
\end{equation}
 Suppose $q,r \in [2,\infty]$.  Then there exists $C>0$, independent of $F$, such that
$$
\|N_{q}(t^{1/2}G)\|_{L^\infty(\R^{n+1}_{+})} \le C\|F\|_{T^{\infty,2}}
$$
and
$$
	\|G\|_{T^{\infty,2,q,r}} \leq C\|F\|_{T^{\infty,2}}.
$$
\end{Lemma}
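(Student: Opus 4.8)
\textbf{Proof plan for Lemma~\ref{lemma:revH}.}
The plan is to reduce both estimates to the off-diagonal decay of $t^{1/2}\nabla e^{-tL}$ combined with the Gaussian kernel bound of Lemma~\ref{kernel-est-L}, and to the fact (Lemma~\ref{boundedness-calT}) that a family of operators acting pointwise in $t$ with $L^2$-$L^\infty$ off-diagonal decay of order $>n/2$ is bounded on $T^{\infty,2}$. First I would observe that $G(t,\cdot)$ is obtained from $F$ by averaging $e^{-(t-s)L}$ over $s\in(t/4,t/2)$, so $t-s\sim t$ uniformly, and hence $G(t,\cdot)=(\mathcal{G}F)(t,\cdot)$ where $\mathcal{G}$ is, up to the harmless average in $s$, of the form treated in Lemma~\ref{boundedness-calT}: the kernel of $e^{-(t-s)L}$ enjoys the Gaussian bound $|w_{t-s}(x,y)|\le C t^{-n/2}\exp(-ct^{-1}|x-y|^2)$, which immediately gives $L^2$-$L^\infty$ off-diagonal estimates with $\mathrm{dist}$-decay of arbitrarily high polynomial order at scale $\sqrt t$. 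This already yields $\|G\|_{T^{\infty,2}}\lesssim\|F\|_{T^{\infty,2}}$, i.e.\ the case $q=r=2$, and by composing with the pointwise (in $t$) averaging operator $F\mapsto \barint_{t/4}^{t/2}F(s,\cdot)ds$, which is trivially bounded on $T^{\infty,2}$, no difficulty arises there.

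To upgrade the integrability exponent from $2$ to $q\in[2,\infty]$ at fixed time, the key point is a local $L^2\to L^q$ smoothing estimate: for any ball $B(x,\sqrt t)$,
\[
	\|e^{-(t-s)L}F(s,\cdot)\|_{L^q(B(x,\sqrt t))}
	\lesssim \sum_{k\ge 0} t^{-\frac n2(\frac12-\frac1q)}\,(\text{decay in }k)\,\|F(s,\cdot)\|_{L^2(2^{k+1}B)},
\]
which follows again from the pointwise Gaussian bound on $w_{t-s}$ by splitting $F(s,\cdot)$ into annuli $S_k(B)$ and estimating $\|w_{t-s}(x,\cdot)\|_{L^{q'}}$ on each annulus, using $t-s\sim t$. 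Averaging in $s$ over $(t/4,t/2)$ and then taking the $L^2_s$-Carleson average over $(0,t)\times B(x_0,\sqrt t)$ produces the bound $\|G\|_{T^{\infty,2,q,r}}\lesssim \|G\|_{T^{\infty,2,q,\infty}}\lesssim$ (a sum over annuli of) $\|F\|_{T^{\infty,2}}$, the annular sum converging because the Gaussian tail beats any polynomial; since $r\le\infty$ and $W_{q,r}\le W_{q,\infty}$ pointwise up to constants on a Whitney box, it suffices to treat $r=\infty$. The $L^\infty$-in-time statement $\|N_q(t^{1/2}G)\|_\infty\lesssim\|F\|_{T^{\infty,2}}$ is the same computation: $N_q(t^{1/2}G)(t,x)$ is controlled by $t^{1/2}$ times the local $L^2\to L^q$ bound above applied with $s\sim t$, and the factor $t^{1/2}$ is exactly absorbed by passing from $\|F(s,\cdot)\|_{L^2(2^{k+1}B)}$ to the Carleson quantity $\big(t^{-n/2}\int_0^t\|F(s,\cdot)\|_{L^2(\ldots)}^2 ds\big)^{1/2}$ via Cauchy--Schwarz in $s$ on the interval $(t/4,t/2)$ of length $\sim t$.

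The main obstacle, and the only place where the argument is not a routine repetition of Lemma~\ref{boundedness-calT}, is the interplay between the two averages: the inner average $\barint_{t/4}^{t/2}$ in the definition of $G$ must be exploited \emph{before} the outer Carleson integral in order to use Cauchy--Schwarz profitably and to keep the annular decay uniform in $s$; doing it in the wrong order loses the $t^{1/2}$ factor or the summability in $k$. Concretely I would first fix $(t,x)$, expand $G(t,x)$ through the kernel, apply Minkowski to pull out the annular sum $\sum_k$, bound each annular contribution by $t^{-n/2(1/2-1/q)}2^{-Nk}\barint_{t/4}^{t/2}\|F(s,\cdot)\|_{L^2(2^{k+1}B(x,\sqrt t))}ds$ for $N$ as large as we wish, and only then integrate in $(t,x)$ against the Carleson or sup norm. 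Everything else — interchanging integrals, Hölder on Whitney boxes, summing a geometric-times-Gaussian series — is standard and I would leave it to the reader.
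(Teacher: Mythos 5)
Your plan is correct and follows essentially the same route as the paper's proof: Minkowski in $s$, annular decomposition with $L^2$--$L^q$ off-diagonal (Gaussian) estimates at scale $\sqrt{t}$ using $t-s\sim t$, Cauchy--Schwarz in $s$ to produce Carleson-type averages (which is also exactly how the factor $t^{1/2}$ is absorbed in the $N_q$ bound), and a Fubini/change-of-angle step, with arbitrarily large polynomial decay absorbing the $2^{jn/2}$ losses, for the $T^{\infty,2,q,r}$ norm. The only cosmetic difference is your opening reduction to Lemma \ref{boundedness-calT} via a literal factorization through the time-averaging operator, which does not quite hold as stated (the semigroup parameter $t-s$ depends on $s$) but is also not needed, since your main argument proceeds exactly as the paper's does.
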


\begin{proof}
Let $(\tau,x) \in \R^{n+1}_+$ and $t \in [\tau,2\tau]$. Using Minkowski's inequality, $L^2$-$L^{q}$ off-diagonal estimates for $(e^{-tL})_{t>0}$, and H\"older's inequality gives  for any $N\ge 0$ 
\begin{align*}
	\left(\barint_{B(x,\sqrt{\tau})} |G(t,y)|^{q}\,dy\right)^{1/q}
	& \leq \barint_{\frac{t}{4}}^{\frac{t}{2}} \left(\barint_{B(x,\sqrt{\tau})}|e^{-(t-s)L}F(s,\,.\,)(y)|^{q}\,dy\right)^{1/q}\,ds\\
	& \lesssim \sum_{j=0}^\infty 2^{-2jN}  \barint_{\frac{t}{4}}^{\frac{t}{2}} \left(\tau^{-\frac{n}{2}}\int_{2^jB(x,\sqrt{\tau})}|F(s,y)|^{2}\,dy\right)^{1/2}\,ds\\
	&\leq  \sum_{j=0}^\infty 2^{-j(2N-\frac{n}{2})}  \left( \barint_{\frac{t}{4}}^{\frac{t}{2}} \barint_{2^jB(x,\sqrt{\tau})}|F(s,y)|^{2}\,dyds\right)^{1/2}\\
	&\lesssim \sum_{j=0}^\infty 2^{-j(2N-\frac{n}{2})}  \left( \barint_{\frac{\tau}{4}}^{\tau} \barint_{2^jB(x,\sqrt{\tau})}|F(s,y)|^{2}\,dyds\right)^{1/2}.
\end{align*}
By taking $t=\tau$, we have an estimate of $N_{q}(t^{1/2}G)(\tau,x)$ and  the right hand side is bounded by 
$$
 \sum_{j=0}^\infty 2^{-j(2N-\frac{n}{2})}  \left( \int_{0}^{2^j\tau} \barint_{2^jB(x,\sqrt{\tau})}|F(s,y)|^{2}\,dyds\right)^{1/2}  \lesssim \|F\|_{T^{\infty,2}}
$$
if  $N>\frac{n}{4}$. Remark that the argument applies with $q=\infty$, taking essential supremum. Next,
by estimating the $L^r$ average in time, we also have
\begin{align*}
	\left(\barint_{\tau}^{2\tau} \left(\barint_{B(x,\sqrt{\tau})} |G(t,y)|^{q}\,dy\right)^{r/q} \,dt\right)^{1/r} 
	\lesssim \sum_{j=0}^\infty 2^{-j(2N-\frac{n}{2})}  \left( \barint_{\frac{\tau}{4}}^{\tau} \barint_{2^jB(x,\sqrt{\tau})}|F(s,y)|^{2}\,dyds\right)^{1/2}.
\end{align*}
 Hence,  Fubini's theorem and $N>\frac{n}{2}$  finally yield
\begin{align*}
	\|G\|_{T^{\infty,2,q,r}} 
	&= \|W_{q,r}G\|_{T^{\infty,2}}\\
	&\lesssim  \sum_{j=0}^\infty 2^{-j(2N-\frac{n}{2})} \sup_{(r,x_0) \in \R^{n+1}_+}
	\left(\int_0^r \barint_{B(x_0,\sqrt{r})} \barint_{\frac{\tau}{4}}^{\tau} \barint_{2^jB(x,\sqrt{\tau})}|F(s,y)|^{2}\,dyds dxd\tau\right)^{1/2}\\
	& \lesssim \sum_{j=0}^\infty 2^{-j(2N-n)} \sup_{(r,x_0) \in \R^{n+1}_+}
	\left(\int_0^r \barint_{2^{j+1}B(x_0,\sqrt{r})} |F(s,y)|^{2}\, dyds\right)^{1/2}
	\lesssim \|F\|_{T^{\infty,2}}.
\end{align*}
Again, the argument applies for $q$ and/or $r=\infty$. 
\end{proof}

\begin{Cor} \label{cor:init-cond}
 Let $p \in [1,\infty]$. 
There exists $C>0$ such that for all $u_0 \in BMO^{-1} $, 
\begin{align*}
\|N_{2p}(t^{1/2}e^{-tL}u_0)\|_{\infty}	+\|e^{-tL}u_0\|_{T^{\infty,2,2p}} \leq C\|u_{0}\|_{BMO^{-1}}
 \end{align*}
\end{Cor}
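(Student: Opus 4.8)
The plan is to obtain this statement as an immediate consequence of Lemma~\ref{lemma:revH}, once one notices that the free evolution coincides with the averaged expression occurring in that lemma. Concretely, I would apply Lemma~\ref{lemma:revH} to the choice $F(s,\,.\,):=e^{-sL}u_0$. By Corollary~\ref{cor:bmoL}(4) this $F$ belongs to $T^{\infty,2}(\R^{n+1}_+)$ with $\|F\|_{T^{\infty,2}}\lesssim\|u_0\|_{BMO^{-1}}$, and by Lemma~\ref{lem:def-u0} each slice $F(s,\,.\,)$ is (a.e. equal to) a locally bounded function on which $e^{-(t-s)L}$ acts through its kernel $w_{t-s}$. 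The semigroup law of Corollary~\ref{cor:bmoL}(1), read through the integral representation of Lemma~\ref{lem:def-u0}, then gives $e^{-(t-s)L}F(s,\,.\,)=e^{-(t-s)L}e^{-sL}u_0=e^{-tL}u_0$ for $0<s<t$, which does not depend on $s$. Hence the function $G$ associated with this $F$ in Lemma~\ref{lemma:revH} is simply
\[
	G(t,\,.\,)=\barint_{t/4}^{t/2}e^{-(t-s)L}F(s,\,.\,)\,ds=e^{-tL}u_0 .
\]

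Next I would invoke Lemma~\ref{lemma:revH} with $q=r=2p$. Since $p\in[1,\infty]$ we have $2p\in[2,\infty]$, which is exactly the admissible range in that lemma (the endpoint value $\infty$ being included, with the essential-supremum interpretation), so it applies. Using also $\|\,\cdot\,\|_{T^{\infty,2,2p}}=\|W_{2p}(\,\cdot\,)\|_{T^{\infty,2}}=\|\,\cdot\,\|_{T^{\infty,2,2p,2p}}$, the two conclusions of Lemma~\ref{lemma:revH} read
\[
	\|N_{2p}(t^{1/2}e^{-tL}u_0)\|_{\infty}=\|N_{2p}(t^{1/2}G)\|_{\infty}\lesssim\|F\|_{T^{\infty,2}},\qquad
	\|e^{-tL}u_0\|_{T^{\infty,2,2p}}=\|G\|_{T^{\infty,2,2p,2p}}\lesssim\|F\|_{T^{\infty,2}} .
\]
Combining these with $\|F\|_{T^{\infty,2}}\lesssim\|u_0\|_{BMO^{-1}}$ and adding yields the asserted inequality.

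There is essentially no serious obstacle; the only point that requires a word of justification is the compatibility of the two descriptions of $e^{-tL}u_0$ --- the one coming from the $BMO^{-1}_L(\R^n)$-duality in Corollary~\ref{cor:bmoL} and the concrete locally $L^2$ function representation of Lemma~\ref{lem:def-u0} --- so that the composition identity $e^{-(t-s)L}(e^{-sL}u_0)=e^{-tL}u_0$ may legitimately be used inside Lemma~\ref{lemma:revH}. This compatibility is exactly what Lemma~\ref{lem:def-u0} provides, so nothing further is needed.
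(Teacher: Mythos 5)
Your proposal is correct and follows essentially the same route as the paper: choose $F(s,\,.\,)=e^{-sL}u_0$, note $\|F\|_{T^{\infty,2}}\lesssim\|u_0\|_{BMO^{-1}}$ by Corollary \ref{cor:bmoL}(4), use the almost-everywhere reproducing identity of Lemma \ref{lem:def-u0} to see that $G=F$ in \eqref{eq:def-G}, and apply Lemma \ref{lemma:revH} with $q=r=2p$. Your extra remark about the compatibility of the abstract $BMO^{-1}_L$ definition of $e^{-tL}u_0$ with the kernel representation is exactly the point the paper also flags, so nothing is missing.
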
 

\begin{proof}
As $\|e^{-tL}u_0\|_{T^{\infty,2}} \lesssim  \|u_{0}\|_{BMO^{-1}}$, it suffices  to apply Lemma \ref{lemma:revH} with $q=r=2p$ and $F(s, \, .\,)=e^{-sL}u_0$, noting that $G=F$ in   \eqref{eq:def-G}. Let us this last point. 
 We have seen that $F(t, \, .\, )=  e^{-(t-s)L}(F(s,\, .\,))$ for almost every $0<s<t$ and it suffices to average for $t/4<s<t/2$, and \eqref{eq:def-G} holds for almost every $t>0$.
  This suffices to get the conclusions.  
\end{proof}

\subsection{The Duhamel term} 
\label{sec:Duhamel}

For the proof of Theorem \ref{thm:modellin}, we need to study the linear operator $\calA$ (formally) defined  by
\[
	\calA(\alpha)(t,\ldot) = \int_0^t e^{-(t-s)L} \div  \alpha(s,\,.\,) \, ds.
\]

\begin{Prop} \label{prop:limit}
Assume $F \in T^{\infty,1,2}(\R^{n+1}_+;\C^n)$. Then 
$$
	I(t)= \int_0^t e^{-(t-s)L}\div F(s,\,.\,)\,ds
$$
is defined in $\calS'(\R^n)$ by for all $\varphi\in \calS(\R^n)$,
$$
	\skp{I(t),\varphi} = - \int_0^t \int_{\R^n} F(s,x) \cdot \overline{ \nabla e^{-(t-s)L^\ast} \varphi (x) }\,dxds
$$
where the integral converges (we use sesquilinear dualities). Moreover, 
 $\lim_{t \to 0} I(t)=0$ in $\calS'(\R^n)$, 
and $I \in C((0,\infty);\calS'(\R^n))$.
\end{Prop}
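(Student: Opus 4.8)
The statement is essentially a definition-plus-consistency claim for the Duhamel term: we must (a) make sense of the integral $\int_0^t \int_{\R^n} F(s,x)\cdot\overline{\nabla e^{-(t-s)L^\ast}\varphi(x)}\,dxds$ as an absolutely convergent Lebesgue integral whenever $F\in T^{\infty,1,2}$, (b) check it defines an element of $\calS'(\R^n)$, (c) show it vanishes as $t\to0$, and (d) show $t\mapsto I(t)$ is continuous into $\calS'$. The engine driving all of this is the off-diagonal bound of Lemma \ref{kernel-est-L}(ii) for $\nabla e^{-\tau L^\ast}$ (the same estimates hold for $L^\ast$), combined with the fact that for Schwartz $\varphi$ the function $\nabla e^{-\tau L^\ast}\varphi$ has both good spatial decay away from $\supp\varphi$ and good integrability, so that testing against the Whitney-averaged Carleson norm of $F$ converges.

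\textbf{Step 1 (Absolute convergence).} Fix $\varphi\in\calS(\R^n)$ and $t>0$, and set $\tau=t-s\in(0,t)$. I would dyadically decompose $\R^n$ into annuli around a ball $B_0$ containing (most of the mass of) $\varphi$, writing $\R^n=\bigcup_{j\ge0}A_j$ with $A_j$ at distance $\sim 2^j$ from $B_0$. Using Lemma \ref{kernel-est-L}(ii) with $q=2$ applied to $\Eins_{A_j}\nabla e^{-\tau L^\ast}\Eins_{B_0}$ together with the rapid decay of $\varphi$ to handle the far pieces of $\varphi$, one gets $\|\nabla e^{-\tau L^\ast}\varphi\|_{L^2(A_j)}\lesssim \tau^{-1/2}\tau^{-n/4}e^{-c 4^j/\tau}\|\varphi\|$ for $j$ with $2^j\gtrsim\sqrt\tau$, and a crude bound $\lesssim\tau^{-1/2}\tau^{-n/4}\|\varphi\|_{2}$ near $\supp\varphi$; for Schwartz $\varphi$ one actually has $\|\nabla e^{-\tau L^\ast}\varphi\|_{L^2(A_j)}\lesssim C_N\tau^{-1/2}(1+2^j+\sqrt\tau)^{-N}$ for any $N$, uniformly in $\tau$ bounded, by combining semigroup contractivity with the off-diagonal decay. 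Then on each Whitney box $W(\tau,x)$ (or a dyadic slice in $\tau$), I pair the $L^2$-norm of $\nabla e^{-(t-s)L^\ast}\varphi$ against $W_2 F$ using Cauchy--Schwarz in space and the defining sup of the $T^{\infty,1,2}$-norm for the $s$-integration, with the decay in $s$ near $s=t$ being $\int_0^t (t-s)^{-1/2}\,ds<\infty$ the only mild singularity — integrable — and the $j$-sum converging because of the rapid decay above. This shows the integral is absolutely convergent and that $|\skp{I(t),\varphi}|\lesssim C(\varphi,t)\|F\|_{T^{\infty,1,2}}$, which also gives $I(t)\in\calS'(\R^n)$ once one checks the bound is continuous in a Schwartz seminorm of $\varphi$ (it is: only finitely many seminorms enter through the decay estimates).

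\textbf{Step 2 ($t\to0$ and continuity).} For $\lim_{t\to0}I(t)=0$: the same estimate, but now note the $s$-integral runs over $(0,t)$, a shrinking interval, and $\int_0^t(t-s)^{-1/2}\,ds=2\sqrt t\to0$, while the spatial/annular factors are controlled uniformly for $t\le1$; hence $|\skp{I(t),\varphi}|\lesssim \sqrt t\,\|\varphi\|_{(k)}\|F\|_{T^{\infty,1,2}}\to0$. For $I\in C((0,\infty);\calS')$: given $t_0>0$ and $t\to t_0$, split $\skp{I(t),\varphi}-\skp{I(t_0),\varphi}$ into the contribution of $s\in(\min(t,t_0),\max(t,t_0))$ — a vanishing interval, handled as above — plus $\int_0^{\min(t,t_0)}\int F(s,x)\cdot\overline{(\nabla e^{-(t-s)L^\ast}-\nabla e^{-(t_0-s)L^\ast})\varphi}\,dxds$, which goes to zero by dominated convergence: the integrand tends to $0$ pointwise in $s$ by strong continuity of the analytic semigroup applied to $\nabla e^{-\epsilon L^\ast}\varphi$ (which lies in $L^2$ for fixed $\epsilon$, and in fact in weighted $L^2$), and is dominated by the integrable majorant built in Step 1, uniformly for $t$ near $t_0$.

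\textbf{Main obstacle.} The delicate point is uniformity of the $L^2$-(and weighted-$L^2$-)bounds on $\nabla e^{-\tau L^\ast}\varphi$ as $\tau\to0$ near $\supp\varphi$, where there is no small-time smoothing gain beyond $\tau^{-1/2}$ from the gradient; this is exactly why the $(t-s)^{-1/2}$ singularity appears, and one must check it is genuinely integrable and that pairing it against the $L^2$-Whitney average $W_2F$ (rather than an $L^1$ average) is legitimate — which it is precisely because $T^{\infty,1,2}$ uses the $L^2$ Whitney average $W_2$ and the Carleson-type sup over $(0,t)$ absorbs the time integral. Organizing the double dyadic sum (over spatial annuli and over dyadic time-slices of $(t-s)$) so that both converge is the only real bookkeeping; everything else is the off-diagonal machinery already in Lemma \ref{kernel-est-L} and routine dominated-convergence arguments.
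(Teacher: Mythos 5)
There is a genuine gap, and it sits exactly at the point you flag as the ``main obstacle''. Your Step 1 pairs $|F|$ against $|\nabla e^{-(t-s)L^\ast}\varphi|$ using the crude smoothing bound $\|\nabla e^{-\tau L^\ast}\varphi\|_2\lesssim\tau^{-1/2}\|\varphi\|_2$, and you claim the resulting $(t-s)^{-1/2}$ singularity is harmless because $\int_0^t(t-s)^{-1/2}\,ds<\infty$ and ``the Carleson-type sup absorbs the time integral''. It does not: $F\in T^{\infty,1,2}$ gives no pointwise-in-$s$ control of $\|F(s,\cdot)\|_{L^2(B)}$ or of $W_2F(s,\cdot)$; the norm only controls the integral of $W_2F$ over Carleson regions, an $L^1$-type quantity in $(s,y)$. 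Such a density can only be paired against a factor whose $T^{1,\infty,2}$ norm is finite --- this is the duality of Proposition \ref{Carleson-HR}, $\|Fh\|_{L^1}\lesssim \|h\|_{T^{1,\infty,2}}\|F\|_{T^{\infty,1,2}}$ --- and for $h(s,x)=\Eins_{(0,t)}(s)\Eins_{2B}(x)\,\nabla e^{-(t-s)L^\ast}\varphi(x)$ the Whitney average $W_2h(\sigma,z)$ involves $\barint_\sigma^{2\sigma}(t-s)^{-1}\Eins_{\{s<t\}}\,ds$, which diverges on Whitney boxes touching $s=t$ if one only knows the $\tau^{-1/2}$ bound. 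Dyadic slicing in $t-s$ does not rescue this: on the slice $t-s\sim 2^{-k}t$ the Carleson control of $F$ gives no gain in $k$, so the factors $(2^{-k}t)^{-1/2}$ sum to infinity. The same defect undermines your claimed rate $|\skp{I(t),\varphi}|\lesssim\sqrt{t}\,\|F\|_{T^{\infty,1,2}}$ in Step 2 and the ``integrable majorant'' you invoke for continuity. In short, the region $x$ near $\supp\varphi$ with $s$ comparable to $t$ cannot be handled by size estimates alone; a cancellation is required, and your proposal contains none.

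The paper supplies this cancellation. After reducing to $\varphi\in\calD(\R^n)$ supported in a unit ball $B$ (a Schwartz function is a rapidly decaying sum of such bumps, giving bounds uniform in the location of $B$), it writes, for $x\in 2B$ and $s\le t/4$, $\nabla e^{-(t-s)L^\ast}\varphi=[\nabla e^{-(t-s)L^\ast}\varphi-\nabla e^{-tL^\ast}\varphi]+[\nabla e^{-tL^\ast}\varphi-\nabla\varphi]+\nabla\varphi$ (two-term analogue for $t/4<s\le t$). The increments are estimated \emph{uniformly} in $L^2$, with no $(t-s)^{-1/2}$ loss, via the Kato square root theorem and the functional calculus: $\|\nabla e^{-uL^\ast}\varphi-\nabla\varphi\|_2\simeq\|(e^{-uL^\ast}-I)(L^\ast)^{1/2}\varphi\|_2\lesssim\|\nabla\varphi\|_2$, together with a square-function bound $\big(\int_0^{t/4}\|Q_s(L^\ast)^{1/2}\varphi\|_2^2\,\tfrac{ds}{s}\big)^{1/2}$ for the first piece; their $T^{1,\infty,2}$ norms are then finite (maximal function plus Kolmogorov's lemma) and tend to $0$ as $t\to0$. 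The leftover term $\nabla\varphi$ is bounded and is paired with the plain local $L^1$ control $\|F\|_{T^{\infty,1}}$ via $\|\nabla\varphi\|_\infty$, with dominated convergence giving the limit. Only the far annuli are treated the way you propose, by the $L^1$--$L^2$ off-diagonal bounds of Lemma \ref{kernel-est-L}(ii); and the continuity statement is proved by rerunning this case analysis on $I(\tau)-I(t)$, not by a dominated-convergence shortcut. The Kato square root input and the decomposition subtracting $\nabla\varphi$ are the essential missing ideas in your argument.
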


Remark that one could even assume $F \in T^{\infty,1,q'}(\R^{n+1}_+;\C^n)$ with $q'$ the dual exponent  for $q$ in Lemma \ref{kernel-est-L}, (ii) for $L^*$. We may not be able to take $q'=1$. 

Thus we have a definition for $\calA$ and also a trace in the sense of Schwartz distributions for all $\alpha\in T^{\infty,1,2} $.  The estimates substituting for \eqref{linear-est1} and \eqref{linear-est2}, are the following. 

 \begin{Prop} \label{prop:A} When $p \in [2,\infty)$ with $2p>n$,
 there exists $C>0$ such that for all measurable functions $\alpha$ in $\R^{n+1}_+$ for which the right-hand side is finite,
\begin{align} \label{linear-est1-new} 
	\norm{N_{2p}(t^{1/2}\calA(\alpha))}_{{\infty}} 
	& \leq C \norm{\alpha}_{T^{\infty,1,2}} + C \norm{N_p(s\alpha(s,\,.\,))}_{{\infty}}, \\ 
	\label{linear-est2-new}
	\norm{\calA(\alpha)}_{T^{\infty,2,2p}}
	 &\leq C \norm{\alpha}_{T^{\infty,1,p}}
	 + C\norm{s^{1/2}\alpha(s,\,.\,)}_{T^{\infty,2,p,2p}}.
\end{align}
\end{Prop}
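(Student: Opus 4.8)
The strategy mirrors Step 3 of the proof of Theorem \ref{mainTheorem}, but now all estimates must be carried through with $L^2$-$L^q$ and $L^1$-$L^q$ off-diagonal bounds from Lemma \ref{kernel-est-L} in place of pointwise Oseen-kernel bounds, and with Whitney-averaged tent spaces. First I would perform the same algebraic splitting of $\calA(\alpha)$: writing $e^{-(t-s)L} = e^{-(t-s)L}L(sL)^{-1}(I-e^{-2sL})s^{-1}\cdot s + \text{(tail terms)}$, decompose
\begin{align*}
	\calA(\alpha)(t,\cdot)
	&= \int_0^t e^{-(t-s)L}L\,(sL)^{-1}(I-e^{-2sL})\,s^{1/2}\div\,(s^{1/2}\alpha(s,\cdot))\,ds \\
	&\quad + \int_0^\infty e^{-(t+s)L}\div\,\alpha(s,\cdot)\,ds
	   - \int_t^\infty e^{-(t+s)L}\,s^{-1/2}\div\,(s^{1/2}\alpha(s,\cdot))\,ds \\
	&=: \calA_1(\alpha)(t,\cdot) + \calA_2(\alpha)(t,\cdot) + \calA_3(\alpha)(t,\cdot).
\end{align*}
The point of the decomposition is unchanged: $\calA_1$ has a genuine singularity at $s=0$ and is handled by maximal regularity composed with a "nice" multiplication operator; $\calA_2$ is the non-local term requiring a Hardy-space factorisation; $\calA_3$ is a Schur-type remainder.

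For the Carleson estimate \eqref{linear-est2-new}: for $\calA_1$, I would factor it as $\calM^+ \circ \calT \circ (s^{1/2}\alpha)$ where $\calM^+ F(t,\cdot)=\int_0^t e^{-(t-s)L}LF(s,\cdot)\,ds$ is bounded on $T^{\infty,2}$ by \cite[Theorem 3.2]{AMP} (the needed decay comes from the Gaussian bounds in Lemma \ref{kernel-est-L}(i)), and $\calT F(s,\cdot)=T_s F(s,\cdot)$ with $T_s=(sL)^{-1}(I-e^{-2sL})s^{1/2}\div$; here $T_s$ satisfies $L^2$-$L^q$ off-diagonal bounds of the decay order required by the (Whitney-average version of the) hypothesis of Lemma \ref{boundedness-calT}, so $\calT$ maps the $T^{\infty,2,p,2p}$-weighted space into $T^{\infty,2,2p}$; one must check that the output of $\calM^+$ still lands in $T^{\infty,2,2p}$ rather than just $T^{\infty,2}$, which follows by composing with an extra application of $e^{-sL}$-type smoothing as in Lemma \ref{lemma:revH}. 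For $\calA_2$: dualise as in Step 3(ii), factoring $\calA_2^\ast$ through $H^1(\R^n)$ via the square function $G\mapsto \int_0^\infty \nabla e^{-tL^\ast}G(t,\cdot)\,dt$ (bounded from $T^{1,2}$ to $H^1$ using the $L^1$-$L^q$ off-diagonal bounds of Lemma \ref{kernel-est-L}(ii) and the fact that $p_-(L^\ast)=1$) and then using the $H^1 \to T^{1,\infty,2}$ boundedness of $h\mapsto e^{-sL}h$; the Carleson–duality of Proposition \ref{Carleson-HR} converts this into the $T^{\infty,1,p}$ bound on $\calA_2$. For $\calA_3$, invoke Lemma \ref{Lemma-errorterm}'s analogue (the operator-valued kernel $K(t,s)=e^{-(t+s)L}s^{-1/2}\div$ satisfies the same Schur bound $\|K(t,s)\|_{L^2\to L^2}\lesssim s^{-1/2}(t+s)^{-1/2}$ and $L^2$-$L^q$ off-diagonal estimates with Gaussian decay, so the $j$- and $k$-summation argument there goes through verbatim, with the off-diagonal gain coming from Lemma \ref{kernel-est-L}).

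For the $N_{2p}$ estimate \eqref{linear-est1-new}: this is the analogue of the Koch–Tataru $L^\infty$-bound (Step 2), but now stated with local $L^{2p}$ averages. I would split $\calA(\alpha)(t,\cdot)$ near $t$ (say $s\in(t/2,t)$) from the remote part $s\in(0,t/2)$. On the remote part, $t-s\sim t$, so $\sqrt{t}\nabla e^{-(t-s)L}$ is $L^2$-$L^{2p}$ bounded with Gaussian off-diagonal decay by Lemma \ref{kernel-est-L}(ii), and summing the annular decomposition of $\alpha$ in space against $\|s^{1/2}\alpha\|$ bounded by $\|\alpha\|_{T^{\infty,1,2}}$ gives the first term on the right of \eqref{linear-est1-new}; for this to work cleanly one uses a Whitney-average variant of the estimate $\int_a^{2a}\barint_{B}|\alpha|\lesssim\dots$ controlled by $\|\alpha\|_{T^{\infty,1,2}}$. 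On the near part, $s\sim t$, and one bounds $\|\sqrt{t-s}\,\nabla e^{-(t-s)L}\|_{L^p\to L^{2p}}$ by the off-diagonal estimates, then $\int_{t/2}^t (t-s)^{-1/2}\,ds \cdot \sup_s \|s\,\alpha(s,\cdot)\|$ type integration yields the $N_p(s\alpha)$ term (the Whitney vs.\ single-slice discrepancy is absorbed by an extra averaging step).

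\textbf{Main obstacle.} The principal difficulty, as in the Navier–Stokes case, is the $\calA_2$ term: one must run the $H^1$-factorisation and the duality of Proposition \ref{Carleson-HR} without pointwise kernel bounds, which forces one to use the full strength of the Hardy-space-associated-to-$L^\ast$ theory of \cite{HMMc} (and the identification $H^1_{L^\ast}\cong\dot F^{1,2}_1$, valid because $p_-(L^\ast)=1$) to make sense of the square function $G\mapsto\int_0^\infty\nabla e^{-tL^\ast}G(t,\cdot)\,dt$ as a bounded map $T^{1,2}\to H^1(\R^n;\C^n)$. A secondary but pervasive technical nuisance is that every estimate is in terms of Whitney averages $W_{q,r}$, so at each step I would need to trade a pointwise-in-$(t,x)$ bound for a Whitney-averaged one via Lemma \ref{lemma:revH} or the self-improving averaging trick it embodies, and keep track of the exponents $p$, $2p$, $q$ forced by the condition $2p>n$ and the range $q<2+\eps$ in Lemma \ref{kernel-est-L}(ii).
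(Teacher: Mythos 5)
Your plan reproduces the paper's architecture (the splitting $\calA=\calA_1+\calA_2+\calA_3$ for the Carleson bound, maximal regularity plus a multiplier for $\calA_1$, a Hardy-space dualisation for $\calA_2$, a Schur-type remainder for $\calA_3$, and a near/far time splitting for the $N_{2p}$ bound), but there is a genuine gap at the heart of the argument, namely in $\calA_2$. Your factorisation --- the square function $G\mapsto\int_0^\infty\nabla e^{-tL^\ast}G(t,\cdot)\,dt$ mapping $T^{1,2}$ into $H^1$, followed by ``the $H^1\to T^{1,\infty,2}$ boundedness of $h\mapsto e^{-sL}h$'' --- only reproduces $(\calA_2^\ast G)(s,\cdot)=\nabla e^{-sL^\ast}\int_0^\infty e^{-tL^\ast}G(t,\cdot)\,dt$ if the gradient commutes with the semigroup, which is true for $\Delta$ but false for $L=-\div(A\nabla)$ with rough $A$. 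This non-commutation is exactly the new difficulty of the model case: the paper resolves it by embedding the scalar problem into the first-order system $DB^\ast$, so that $\nabla e^{-sL^\ast}u$ is the tangential part of $e^{-sDB^\ast DB^\ast}$ applied to the vector $(0,-\nabla u)$, and the required non-tangential maximal estimate is then for the companion semigroup generated by $-\nabla\div A^\ast$ acting on curl-free $H^1$ data; this is not the classical Fefferman--Stein bound but \cite[Theorem 9.1]{AS} combined with $H^1_{DB^\ast}=H^1_D$ for real $B^\ast$ (\cite[Corollary 13.3]{AS}). Your ``main obstacle'' paragraph locates the difficulty in the $T^{1,2}\to H^1$ square-function bound via \cite{HMMc}; in fact that half is obtained by adapting \cite[Theorem 6]{CMS} using $L^1$-$L^2$ off-diagonal estimates, and it is the other half --- the maximal-function estimate with the non-commuting gradient --- that needs the non-classical input. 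Without it your $\calA_2$ estimate does not close.

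Two further steps are asserted but would not go through as stated. For $\calA_1$, boundedness of $\calM^+$ on the Whitney-averaged space $T^{\infty,2,2p}$ does not follow from ``an extra application of $e^{-sL}$-type smoothing as in Lemma \ref{lemma:revH}'': unlike $\calA_2(\alpha)$, the function $\calM^+F(t,\cdot)$ does not reproduce under the semigroup, so the paper subtracts $\barint_{t/4}^{t/2}e^{-(t-s)L}\calM^+F(s,\cdot)\,ds$, exploits the time localisation of the difference $\widetilde{\calM}^+$, and uses $L^q$-maximal regularity on $L^q(\R^n)$ from \cite{CD1} together with change of angle \cite{A}. Likewise the multiplier $\calT$ must gain integrability from $L^p$ to $L^{2p}$ on Whitney boxes; this rests on the Kato square root/Riesz transform bounds (so that $L^{-1/2}\div$ is bounded on $L^p$) and the Sobolev bound $L^{-1/2}:L^q\to L^{q^\ast}$ of \cite{memoirs}, with the constraint $2p\leq p^\ast$ --- this is where $2p>n$ enters for $\calA_1$ --- and plain off-diagonal decay in the spirit of Lemma \ref{boundedness-calT} does not provide it. Finally, in the $L^\infty$ estimate your far-in-time part sums annuli ``against $\|\alpha\|_{T^{\infty,1,2}}$'', but that norm controls only Whitney averages, not single-time-slice $L^2$ norms of $\alpha$; the paper's mechanism is to dualise locally against $g\in L^{(2p)'}(B(x,\sqrt{t}))$ and to prove $\|N(W_2\beta)\|_1\lesssim t^{n/4p}$ for the resulting test functions (Kolmogorov's lemma, an analyticity splitting, off-diagonal decay), so that Proposition \ref{Carleson-HR} applies; some such duality step is unavoidable and is missing from your sketch.
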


The second estimate  allows us  to prove

\begin{Cor} \label{lem:weak-sol}
Assume $F \in T^{\infty,1,p}(\R^{n+1}_+;\C^n)$ with $\norm{s^{1/2}F(s,\,.\,)}_{T^{\infty,2,p,2p}}<\infty$. Then $I=\calA(F)$
belongs to   $L^2_{\loc}(\R^{n+1}_+)$,  $\nabla_x I \in L^2_{\loc}(\R^{n+1}_+)$ and is a weak solution to $\partial_{t}I(t,x) + L I(t,x)  \!\! = \; \div F(t,x).$
\end{Cor}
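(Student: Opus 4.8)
The plan is to deduce everything from the estimate \eqref{linear-est2-new} in Proposition \ref{prop:A} and the distributional identity in Proposition \ref{prop:limit}. First I would observe that since $F \in T^{\infty,1,p}$ with $s^{1/2}F(s,\cdot) \in T^{\infty,2,p,2p}$ and $p \geq 2$, H\"older's inequality on Whitney boxes gives $F \in T^{\infty,1,2}$, so Proposition \ref{prop:limit} applies and $I=\calA(F)$ is well-defined in $\calS'(\R^n)$ for each $t$, with $I \in C((0,\infty);\calS'(\R^n))$ and $I(t) \to 0$ as $t \to 0$. Next, \eqref{linear-est2-new} gives $\calA(F) \in T^{\infty,2,2p}(\R^{n+1}_+)$, hence $I \in T^{\infty,2}(\R^{n+1}_+) \hookrightarrow L^2_{\loc}(\R^{n+1}_+)$ (since $2p \geq 2$). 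This settles the first membership claim.

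For $\nabla_x I \in L^2_{\loc}$, I would argue as in Proposition \ref{rem:weak}: split off a factor of the semigroup, writing for $t>\delta>0$, $I(t) = e^{-(t-\delta)L}I(\delta) + \calA(F\Eins_{(\delta,\infty)})(t)$ — or more directly, use that on any compact time interval $[\delta, T]$ one can write $\nabla_x I(t) = \int_0^t \nabla_x e^{-(t-s)L}\div F(s,\cdot)\,ds$ and exploit the $L^1$-$L^q$ off-diagonal bounds of Lemma \ref{kernel-est-L}(ii) for $\nabla e^{-rL}$ together with the Carleson control on $F$; alternatively, split $\nabla_x e^{-(t-s)L} = \nabla_x e^{-((t-s)/2)L} e^{-((t-s)/2)L}$ so that $\nabla_x$ hits a semigroup at a time comparable to $t-s$ and then invoke Lemma \ref{boundedness-calT} and the structure of $\calA$. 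The cleanest route is probably to note that $\nabla_x \calA(\alpha)$ has the same form as the operators already handled in the proof of Proposition \ref{prop:A}, so the local $L^2$ bound on $\nabla_x I$ follows from the same off-diagonal machinery; the point is only to get \emph{local} integrability, which is much softer than the global Carleson estimate.

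Finally, for the weak-solution identity, I would test against $\varphi \in \calD(\R^{n+1}_+)$. Since $I \in C((0,\infty);\calS'(\R^n))$ with the explicit formula $\skp{I(t),\varphi} = -\int_0^t \int_{\R^n} F(s,x)\cdot\overline{\nabla e^{-(t-s)L^\ast}\varphi(x)}\,dx\,ds$ from Proposition \ref{prop:limit}, and since $I$ and $\nabla_x I$ are in $L^2_{\loc}$, all terms in the weak formulation are genuine Lebesgue integrals. I would then mollify/approximate: for fixed $\delta>0$ and $t>\delta$ write $I(t) = e^{-(t-\delta)L}I(\delta) + \int_\delta^t e^{-(t-s)L}\div F(s,\cdot)\,ds$, differentiate the first term using the kernel representation of $e^{-rL}$ (Lemma \ref{kernel-est-L}(i)) and Proposition \ref{rem:weak}-type reasoning, handle the Duhamel tail using the distributional formula, integrate by parts in $x$ to move $\div$ onto $\nabla_x\varphi$, and integrate by parts in $t$ using $I(\delta)\to 0$; then let $\delta \to 0$. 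The resulting identity
$$
\iint \big(-I\,\partial_t\varphi + A\nabla_x I\cdot\nabla_x\varphi\big)\,dxdt = -\iint F\cdot\nabla_x\varphi\,dxdt
$$
is exactly the weak formulation of $\partial_t I + LI = \div F$.

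The main obstacle I anticipate is the $\nabla_x I \in L^2_{\loc}$ step combined with the careful justification of the integration by parts in the weak-solution identity, i.e., controlling the singularity of $\nabla_x e^{-(t-s)L}$ as $s \to t$ against the merely-Carleson control on $F$ without pointwise kernel bounds for the gradient — this is precisely where one must use Lemma \ref{kernel-est-L}(ii) (and the restriction $2p>n$) rather than Gaussian bounds, and where the semigroup-splitting trick is essential. The time-regularity and trace bookkeeping (passing limits $\delta\to 0$, differentiation under the integral sign) is routine but tedious; I would invoke Proposition \ref{prop:limit} and the smoothing of $e^{-rL}$ on $L^2_{\loc}$ (Lemma \ref{lem:def-u0}-style arguments) to keep it brief.
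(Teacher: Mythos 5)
Your first claim ($I\in T^{\infty,2,2p}\hookrightarrow T^{\infty,2}\hookrightarrow L^2_{\loc}$ via \eqref{linear-est2-new}) matches the paper. The rest of the proposal has a genuine gap, and it is the same gap in both remaining steps: you never deal with the non-integrable singularity of $\nabla_x e^{-(t-s)L}\div$ as $s\to t$. On $L^2$ this operator has norm of order $(t-s)^{-1}$, so no amount of off-diagonal decay, locality in space, or factorizations like $\nabla_x e^{-(t-s)L}=\nabla_x e^{-((t-s)/2)L}e^{-((t-s)/2)L}$ (which still yields $(t-s)^{-1/2}\cdot(t-s)^{-1/2}$) can make $\int_{t/2}^t\|\nabla_x e^{-(t-s)L}\div F(s,\cdot)\|\,ds$ converge absolutely; your remark that local integrability is ``much softer'' and follows from ``the same off-diagonal machinery'' is exactly the misconception. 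The paper splits the Duhamel integral at $s=t/2$: for $s<t/2$ it factors out $\nabla_x e^{-(t/2)L}$ acting on $\int_0^{t/2}e^{-(t/2-s)L}\div F(s,\cdot)\,ds$, which lies in $T^{\infty,2}$ by \eqref{linear-est2-new}; for $s\in(t/2,t)$ it uses the bounded $H^\infty$ functional calculus of $L$ on $L^2$ (i.e.\ a maximal-regularity/quadratic estimate, morally $\nabla L^{-1/2}\,[Le^{-(t-s)L}]\,L^{-1/2}\div$ with bounded Riesz transforms) to get $T^{2,2}\to T^{2,2}$, and only then upgrades to $T^{\infty,2}$ by $L^2$ off-diagonal estimates. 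Neither Lemma \ref{kernel-est-L}(ii) nor the restriction $2p>n$ is the operative tool here (the latter is used in the $L^\infty$ estimate of Proposition \ref{prop:A}); without the functional-calculus input your step ``$\nabla_x I\in L^2_{\loc}$'' does not close.

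The same diagonal singularity undermines your verification of the weak formulation. Splitting $I(t)=e^{-(t-\delta)L}I(\delta)+\int_\delta^t e^{-(t-s)L}\div F(s,\cdot)\,ds$ removes nothing near $s=t$, and it is precisely there that the formal integration by parts in $t$ produces $L^*e^{-(t-s)L^*}\varphi$, i.e.\ a $(t-s)^{-1}$-type factor against a merely Carleson-controlled $F$, so the manipulations you describe are not justified as written. The paper's device is the opposite truncation: define $I_\eps(t)=\int_0^{t-\eps}e^{-(t-s)L}\div F(s,\cdot)\,ds$, use $\partial_t e^{-(t-s)L^*}\varphi=-L^*e^{-(t-s)L^*}\varphi=e^{-(t-s)L^*}\div(A^*\nabla\varphi)$ composed with $\nabla$, integrate by parts in $t$ so that the problematic diagonal becomes the boundary term $\nabla e^{-\eps L^*}\varphi(s+\eps,\cdot)$, and then pass to the limit $\eps\to0$ using the $T^{\infty,1,2}$--$T^{1,\infty,2}$ duality estimates from the proof of Proposition \ref{prop:limit} together with uniform local $L^2$ bounds and weak convergence of $\nabla_x I_\eps$. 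Your outline could be repaired by adopting this upper truncation and the limiting argument, but as proposed the two key steps (the near-diagonal gradient bound and the justification of the integration by parts) are missing.
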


\subsection{Proof of Theorems \ref{thm:model} and \ref{thm:modellin}} 
\label{sec:pf-mainthm}

First the proof of Theorem \ref{thm:modellin} follows immediately from the results in Section  \ref{sec:freeev} and  the results stated in the above section. We turn to the proof of  Theorem \ref{thm:model}.

\begin{Lemma} \label{lemma:linear}
With $\calE_\infty$ as defined in \eqref{pathsp-new}, we have
\begin{align} \label{alpha-condition-new}
u,v \in \calE_\infty, \; \alpha:=f(u^2) -f(v^2) 
\quad \Rightarrow \quad
\begin{cases} 
 \alpha \in T^{\infty,1,p}(\R^{n+1}_+;\C^n), \\
 s^{1/2}\alpha(s,\,.\,) \in T^{\infty,2,p,2p}(\R^{n+1}_+;\C^n), \\
 N_{p}(s\alpha(s,\,.\,)) \in L^{\infty}(\R^{n+1}_+;\C^n).
 \end{cases}
\end{align}
\end{Lemma}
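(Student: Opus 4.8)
The plan is to exploit the Lipschitz bound and the growth bound on $f$ to pointwise-dominate $\alpha = f(u^2) - f(v^2)$ by expressions that are quadratic in $u,v$, and then to match each of the three target norms to the corresponding norm of $u$ (resp.\ $v$) in $\calE_\infty$. Write $f(u^2) - f(v^2)$; since $f$ is globally Lipschitz with constant, say, $[f]$, we have $|f(u^2(s,y)) - f(v^2(s,y))| \le [f]\,|u^2(s,y) - v^2(s,y)| \le [f]\,|u(s,y) - v(s,y)|\,(|u(s,y)| + |v(s,y)|)$ pointwise, and also $|f(u^2)| \le C|u|^2$ and similarly for $v$, giving $|\alpha(s,y)| \lesssim |u(s,y)|^2 + |v(s,y)|^2$ directly from $|f(x)|\le C|x|$. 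Both kinds of bound will be used: the second for the two tent-space conditions, and either one for the $N_p$ condition. It therefore suffices to treat a single function $w \in \calE_\infty$ and show $w^2$ satisfies the three conclusions with $p$ in place of the exponent structure of $\alpha$.

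The key steps, in order. First, the $L^\infty$-type condition: $N_p(s\,w^2(s,\,.\,))(t,x) = |B(x,\sqrt t)|^{-1/p}\||s w^2(s,\,.\,)|\|_{L^p(B(x,\sqrt t))}$; since $s w^2 = (s^{1/2}w)^2$, this is $|B(x,\sqrt t)|^{-1/p}\|s^{1/2}w(s,\,.\,)\|_{L^{2p}(B(x,\sqrt t))}^2 = \big(N_{2p}(s^{1/2}w(s,\,.\,))(t,x)\big)^2$ up to the measure normalization, which is $\lesssim \|w\|_{\calE_\infty}^2$ by the first term in Definition \ref{Def:ET-new}. Second, the $T^{\infty,2,p,2p}$ condition for $s^{1/2}\alpha$: one wants $\|W_{p,2p}(s^{1/2}w^2(s,\,.\,))\|_{T^{\infty,2}} \lesssim \|w\|_{\calE_\infty}^2$. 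Factor $s^{1/2}w^2 = w \cdot (s^{1/2}w)$ and, inside a Whitney box, apply Hölder in $(s,y)$: $\barint_{W(t,x)}|w\,(s^{1/2}w)|^p \le \big(\barint_{W}|w|^{2p}\big)^{1/2}\big(\barint_W |s^{1/2}w|^{2p}\big)^{1/2}$, then take the $L^{2p}$ average in time; one factor is dominated by $W_{2p}w$ and the other by $\sup_{W(t,x)} N_{2p}(s^{1/2}w(s,\,.\,))$, which is bounded by $\|w\|_{\calE_\infty}$. Since $W_{2p}w \in T^{\infty,2}$ with norm $\lesssim \|w\|_{\calE_\infty}$ (the second term of the norm), the product lands in $T^{\infty,2}$ with the square norm. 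Third, the $T^{\infty,1,p}$ condition for $\alpha$: $\|W_p(w^2)\|_{T^{\infty,1}} \lesssim \|w\|_{\calE_\infty}^2$; here $W_p(w^2)(t,x) = \big(W_{2p}w(t,x)\big)^2$ up to normalization, and one needs $\|(W_{2p}w)^2\|_{T^{\infty,1}} \lesssim \|W_{2p}w\|_{T^{\infty,2}}^2$, i.e.\ $t^{-n/2}\int_0^t\int_{B(x,\sqrt t)} (W_{2p}w)^2 \lesssim \|W_{2p}w\|_{T^{\infty,2}}^2$, which is exactly the definition of the $T^{\infty,2}$ norm of $W_{2p}w$. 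So all three follow once the algebraic factorizations are set up and the Whitney-average Hölder inequalities are applied.

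The main obstacle I expect is bookkeeping with the Whitney averages and the measure normalizations: one must be careful that $W_p(w^2)$, $W_{2p}w$, $N_p(\cdot)$ and $N_{2p}(\cdot)$ interact through Hölder's inequality with the \emph{right} exponents and that the normalizing factors $|W(t,x)|^{-1/q}$, $|B(x,\sqrt t)|^{-1/p}$ combine correctly — in particular that $W_p(w^2) \le (W_{2p}w)^2$ holds with the constant independent of $(t,x)$, and that in the mixed-norm case the time-average of a product splits as claimed (this needs Hölder in $s$ with exponents $2p/p$ after the spatial Hölder, using $r=2p$, $q=p$ in the definition of $W_{q,r}$, and the fact that the inner exponent on $w$ is $2p$ while on $s^{1/2}w$ it is also $2p$). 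None of this is deep, but the condition $2p > n$ is what makes $W_{2p}w$ control the relevant local norms without loss, and one should check it is genuinely used (it enters through the off-diagonal/embedding estimates used to put $W_{2p}w$ in $T^{\infty,2}$, already available from Definition \ref{Def:ET-new}). Since the whole statement reduces to pointwise domination plus Hölder on Whitney boxes plus the definition of the tent norms, no further ingredients beyond the Lipschitz/growth hypotheses on $f$ and Definition \ref{Def:ET-new} are required.
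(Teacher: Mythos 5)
Your proposal is correct and takes essentially the same route as the paper's proof: a pointwise product bound on $\alpha$, H\"older's inequality on Whitney averages (supping out the $N_{2p}(s^{1/2}\cdot)$ factor for the mixed $T^{\infty,2,p,2p}$ norm), and the exact identities $W_p(w^2)=(W_{2p}w)^2$, $N_p(s\,w^2)=\bigl(N_{2p}(s^{1/2}w)\bigr)^2$ and $\|G^2\|_{T^{\infty,1}}=\|G\|_{T^{\infty,2}}^2$. The one (harmless for the statement as written, but relevant for its use in Theorem \ref{thm:model}) difference is that you reduce two of the three conditions to $|\alpha|\lesssim |u|^2+|v|^2$, whereas the paper keeps the Lipschitz factorization $|\alpha|\le a\,b$ with $a=C|u-v|$, $b=|u+v|$ throughout, so that the estimates come out as $\lesssim\|u-v\|_{\calE_\infty}\|u+v\|_{\calE_\infty}$, which is the bilinear form needed for the Picard contraction; your H\"older steps applied verbatim with $a,b$ in place of $w,w$ give exactly that.
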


\begin{proof} By the Lipschitz property of $f$, 
observe that $|f(u^2)- f(v^2)| \le a b$, with $a=C|u-v|$ and $b=|u+v|$ which satisfy the same conditions as $u$ and $v$.  	
By repeated use of H\"older's inequality, one obtains
\begin{align*}
	\|\alpha\|_{T^{\infty,1,p}} 
	&=\|C_1(W_p\alpha)\|_\infty
	\leq \|C_1(W_{2p} a \cdot W_{2p} b)\|_\infty 
	\leq \|C_2(W_{2p} a)\|_\infty \|C_2(W_{2p} b)\|_\infty \\
	&=\|a\|_{T^{\infty,2,2p}} \|b\|_{T^{\infty,2,2p}}.
\end{align*}
Similarly, \begin{align*}
\label{}
  N_{p}(s^{1/2}v(s,\,.\,))  \le N_{2p}(a)N_{2p}(s^{1/2}b(s,\,.\,))  \le  N_{2p}(a)\|N_{2p}(s^{1/2}b(s,\,.\,))\|_{\infty}    
\end{align*} hence
\begin{equation*}
\label{ }
W_{p,2p}(s^{1/2}\alpha(s,\,.\,)) \le W_{2p}(a) \|N_{2p}(s^{1/2}b(s,\,.\,))\|_{\infty}
\end{equation*}
and 
\begin{align*}
	\|s^{1/2} \alpha(s,\,.\,)\|_{T^{\infty,2,p,2p}}
	 = \|C_2(W_{p,2p}(s^{1/2}\alpha(s,\,.\,)))\|_{\infty}
		\leq \|a\|_{T^{\infty,2,2p}} \|N_{2p}(s^{1/2}b(s,\,.\,))\|_\infty.
\end{align*}
Finally, 
\begin{align*}
	\|N_{p}(s\alpha(s,\,.\,))\|_\infty
	\leq \|N_{2p}(s^{1/2}a(s,\,.\,))\|_\infty \|N_{2p}(s^{1/2}b(s,\,.\,))\|_\infty.
\end{align*}
\end{proof}

We have shown in Corollary \ref{cor:init-cond} that for every given initial data $u_0 \in BMO^{-1}(\R^n)$, the free evolution $u(t,x)=e^{-tL}u_0(x)$ belongs to the path space $\calE_\infty$ defined in \eqref{pathsp-new}.
 Let us assume for a moment \eqref{linear-est1-new} and \eqref{linear-est2-new}. Then the theorem is a consequence of Picard's contraction principle. The integral equation \eqref{eq:inteq} is equivalent to
\begin{align*}
	u(t,\,.\,)=e^{-tL}u_0 - \calA(f(u^2))(t,\,.\,), 
\end{align*}
and Lemma \ref{lemma:linear}, \eqref{linear-est1-new}, \eqref{linear-est2-new} imply
\begin{align*}
	\|\calA(f(u^2))-\calA(f(v^2))\|_{\calE_\infty}
	\leq C \|u-v\|_{\calE_\infty} \|u+v\|_{\calE_\infty}.
\end{align*}
The smallness condition on $u_0$ ensures that \eqref{eq:inteq} has a      unique solution in any closed  ball $B(0, R)$ with $R<\frac{1}{2C}$ of the Banach space $\calE_\infty$.  Proposition   \ref{rem:weak} and Corollary \ref{lem:weak-sol} show  that  $u$ is a weak solution to \eqref{lineq} with  $F=f(u^2)$, hence to \eqref{SLPE}. \\

\subsection{Proof of Proposition \ref{prop:A}: The $L^\infty$ estimate}
\label{sec:linfty-est}

Fix $(t,x) \in \R^{n+1}_+$, and let  $a \in (0,1)$ be arbitrary (1/2 for example). To estimate the quantity $t^{-n/4p} \|t^{1/2}\calA\alpha(t,\,.\,)\|_{L^{2p}(B(x,\sqrt{t}))}$,  
we split $\calA$ into the two parts
\begin{align} \label{eq:A-split}
	t^{1/2} \calA \alpha(t,\,.\,)
	= t^{1/2} \int_0^{at} e^{-(t-s)L} \div \alpha(s,\,.\,)\,ds
	+ t^{1/2} \int_{at}^t e^{-(t-s)L} \div \alpha(s,\,.\,)\,ds.
\end{align}
For the second part, $L^p$-$L^{2p}$ off-diagonal estimates for $(e^{-tL}\div)_{t>0}$ yield
\begin{align*}
	& t^{-n/4p} \|t^{1/2} \int_{at}^t e^{-(t-s)L} \div \alpha(s,\,.\,)\,ds \|_{L^{2p}(B(x,\sqrt{t}))}\\
	& \leq \sum_{j=0}^\infty t^{-n/4p} \int_{at}^t \left(\frac{t}{t-s}\right)^{1/2} \|e^{-(t-s)L} (t-s)^{1/2} \div \Eins_{S_j(B(x,\sqrt{t}))} s\alpha(s,\,.\,) \|_{L^{2p}(B(x,\sqrt{t}))} \,ds\\
	& \lesssim  t^{-n/2p} \int_{at}^t \left(\frac{t}{t-s}\right)^{\frac{n}{4p}+\frac{1}{2}} \|s\alpha(s,\,.\,) \|_{L^{p}(B(x,8\sqrt{t}))} \,\frac{ds}{s}\\
	& \qquad + \sum_{j=3}^\infty t^{-n/2p} \int_{at}^t \left(\frac{t}{t-s}\right)^{\frac{n}{4p}+\frac{1}{2}} \left(\frac{t-s}{2^{2j}t}\right)^{N}\|s\alpha(s,\,.\,) \|_{L^{p}(2^jB(x,\sqrt{t}))} \,\frac{ds}{s}\\
	& \lesssim  \|N_{p}(s\alpha(s,\,.\,))\|_{L^{\infty}}
	\left(\int_{at}^t \left(\frac{t}{t-s}\right)^{\frac{n}{4p}+\frac{1}{2}} \,\frac{ds}{s}
	 + \sum_{j=3}^\infty  2^{-2jN} 2^{j\frac{n}{p}} 
	\int_{at}^t \left(\frac{t-s}{t}\right)^{N-\frac{n}{4p}-\frac{1}{2}} \,\frac{ds}{s} \right) \\
	&  \lesssim \|N_{p}(s\alpha(s,\,.\,))\|_{L^{\infty}},
\end{align*} 
where the assumption $2p>n$ is used  in the last step.

Consider now  the first part in \eqref{eq:A-split}. 
Decompose  
\begin{align*}
	& t^{-n/4p} \|t^{1/2} \int_{0}^{at} e^{-(t-s)L} \div \alpha(s,\,.\,)\,ds \|_{L^{2p}(B(x,\sqrt{t}))}\\
	&  \leq t^{-n/4p} \|t^{1/2} \int_{0}^{at} e^{-(t-s)L} \div \Eins_{B(x,8\sqrt{t})} \alpha(s,\,.\,)\,ds \|_{L^{2p}(B(x,\sqrt{t}))}\\
	&  \qquad + \sum_{j=3}^\infty t^{-n/4p} \|t^{1/2} \int_{0}^{at} e^{-(t-s)L} \div \Eins_{S_j(B(x,\sqrt{t}))} \alpha(s,\,.\,)\,ds \|_{L^{2p}(B(x,\sqrt{t}))}.
\end{align*}
For the on-diagonal part, we write 
\begin{align} \label{eq:on-diag1step}
	 &t^{-n/4p} \|t^{1/2} \int_{0}^{at} e^{-(t-s)L} \div \Eins_{B(x,8\sqrt{t})} \alpha(s,\,.\,)\,ds \|_{L^{2p}(B(x,\sqrt{t}))}\\ \nonumber
	& = \sup_{\substack{g \in L^{(2p)'}(B(x,\sqrt{t})) \\ \|g\|_{(2p)'}=1}} t^{-n/4p} \bigg|\skp{\int_0^{at} e^{-(t-s)L} t^{1/2}\div \Eins_{B(x,8\sqrt{t})} \alpha(s,\,.\,)\,ds,g}\bigg|\\ \nonumber
	&=\sup_{\substack{g \in L^{(2p)'}(B(x,\sqrt{t})) \\ \|g\|_{(2p)'}=1}}  t^{-n/4p} \bigg| \int_0^{at} \skp{\alpha(s,\,.\,),\beta_0(s,\,.\,)}\,ds\bigg|,
\end{align}
with 
\begin{align*}
	\beta_0(s,y)=\Eins_{(0,at) \times B(x,8\sqrt{t})}(s,y) t^{1/2} \nabla e^{-(t-s)L^\ast}g(y).
\end{align*}
Since $\alpha \in T^{\infty,1,p}(\R^{n+1}_+;\C^n)$ by assumption and $p\ge 2$, we have  $\alpha \in T^{\infty,1,2}(\R^{n+1}_+;\C^n)$. By  Proposition \ref{Carleson-HR}, it suffices to show that 
$N(W_2 \beta_0) \in L^1(\R^n)$ with $\|N(W_2 \beta_0)\|_{1}\lesssim t^{n/4p}$.

To do so,  split $\beta_0= \beta_0^0+\beta_0^1$ with
\begin{align*}
	\beta_0^0(s,y)&=\Eins_{(0,at) \times B(x,8\sqrt{t}) }(s,y) t^{1/2} \nabla e^{-tL^\ast}  g(y) =:\Eins_{(0,at)}(s)h(y),\\
	\beta_0^1(s,y)&=\Eins_{(0,at) \times B(x,8\sqrt{t})}(s,y) t^{1/2} \nabla (e^{-(t-s)L^\ast} - e^{-tL^\ast})  g(y).
\end{align*}
Now, since $h$ is constant with respect to $s$, one has for every $x_0 \in \R^n$,
\begin{align} \nonumber
	N(W_2 \beta_0^0)(x_0)
	&= \sup_{|x_0-z|<\sqrt{\sigma}} \left(\sigma^{-\frac{n}{2}-1} \iint_{W(\sigma,z)} |\Eins_{(0,at)}(s)h(y)|^2 \,dyds\right)^{1/2}\\ 
	 &\lesssim (\calM (h^2))^{1/2}(x_0) =:\calM_2h(x_0),
	 \label{eq:defM}
\end{align} 
where $\calM$ denotes the uncentred Hardy-Littlewood maximal operator.
Moreover, note that $\supp \beta_0^i \subseteq B(x,8\sqrt{t})\times (0,at)$ implies $\supp N(W_2 \beta_0^i) \subseteq B(x,c\sqrt{t})$ for some constant $c>0$, $i=0,1$, independent of $x$ and $t$. 

Using \eqref{eq:defM}, the support property of $N(W_2 \beta_0^0)$, Kolmogorov's lemma (see e.g. \cite[Lemma 5.16]{Du}) and  $L^{(2p)'}$-$L^2$ boundedness of $(t^{1/2}\nabla e^{-tL^\ast})_{t>0}$ by  Lemma \ref{kernel-est-L} (ii), one obtains
\begin{align*}
	\|N(W_2 \beta_0^0)\|_1 
	\lesssim \int_{B(x,c\sqrt{t})} \calM_2h(x_0)\,dx_0
	\lesssim |B(x,c\sqrt{t})|^{1/2} \|h\|_{2}
	\lesssim t^{n/4} t^{-\frac{n}{2}(\frac{1}{(2p)'}-\frac{1}{2})} \|g\|_{(2p)'}
	= t^{n/4p}.
\end{align*}
This gives the desired estimate for $\beta_0^0$.
To handle $\beta_0^1$, we first observe that a simple geometric argument shows that for every $x_0 \in \R^n$, there exists a parabolic cone $\tilde{\Gamma}(x_0)$, with  aperture  independent of $x_0$,  such that 
$$
	(\sigma,z) \in \Gamma(x_0) \, \Rightarrow \, W(\sigma,z) \subset \tilde{\Gamma}(x_0),
$$ 
 Therefore, 
\begin{align*}
	N(W_2\beta_0^1)(x_0)^2
	\lesssim \iint_{\substack{\tilde{\Gamma}(x_0)\\ s \leq at}}
	|\beta_0^1(s,y)|^2\,\frac{dyds}{s^{n/2+1}},
\end{align*}
and, using Fubini in the second step, 
\begin{align} \nonumber 
	\int_{B(x,c\sqrt{t})} N(W_2\beta_0^1)(x_0)\,dx_0
	& \lesssim t^{n/4}\left(\int_{B(x,c\sqrt{t})} N(W_2\beta_0^1)(x_0)^2\,dx_0\right)^{1/2}\\ 
	&\lesssim t^{n/4} \left(\int_{\R^n} \int_0^{at} |t^{1/2} \nabla (e^{-(t-s)L^\ast} - e^{-tL^\ast})g(y)|^2 \,\frac{dyds}{s}\right)^{1/2}. \label{eq:beta01}
\end{align}
Now write 
\begin{align*}
	t^{1/2} \nabla (e^{-(t-s)L^\ast} - e^{-tL^\ast})g
	= t^{1/2} \nabla e^{-\frac{t}{2}L^\ast} \int_{t/2-s}^{t/2} L^\ast e^{-rL^\ast}g \,dr.
\end{align*}
Since $(t^{1/2}\nabla e^{-tL^\ast})_{t>0}$ is bounded from $L^{(2p)'}$ to $L^2$, and $(e^{-tL^\ast})_{t>0}$ is analytic in $L^{(2p)'}$, one has for $s \in (0,at)$,
\begin{align} \label{eq:L2bound}
	\|t^{1/2} \nabla (e^{-(t-s)L^\ast} - e^{-tL^\ast})g\|_2
	&\lesssim t^{-\frac{n}{2}(\frac{1}{(2p)'}-\frac{1}{2})}\|\int_{t/2-s}^{t/2} L^\ast e^{-rL^\ast}g \,dr\|_{(2p)'}
	\lesssim t^{-\frac{n}{2}(\frac{1}{(2p)'}-\frac{1}{2})} \frac{s}{t} \|g\|_{(2p)'}.
\end{align}
Plugging this into \eqref{eq:beta01} yields
\begin{align*}
	\int_{B(x,c\sqrt{t})} N(W_2\beta_0^1)(x_0)\,dx_0
	\lesssim t^{n/4p}  \|g\|_{(2p)'} \left(\int_0^{at} \left(\frac{s}{t}\right)^2 \,\frac{ds}{s}\right)^{1/2}
	\lesssim t^{n/4p}.
\end{align*}
To handle the off-diagonal part, we  follow the same path and replace $\beta_0$ by
\begin{align*}
	\beta_j =\Eins_{(0,at) \times S_j(B(x,\sqrt{t})) }(s,y) t^{1/2} \nabla e^{-tL^\ast} \Eins_{B(x,\sqrt{t})} g(y)
\end{align*}
for $j\geq 4$, and split $\beta_j=\beta_j^0+\beta_j^1$ in the same way as for $\beta_0$, with $h$ replaced by 
$$ h_j(y)= \Eins_{S_j(B(x,\sqrt{t}))}(y) t^{1/2} \nabla e^{-tL^\ast} g(y).$$ 
According to Lemma \ref{kernel-est-L} (ii), $(t^{1/2}\nabla e^{-tL^\ast})_{t>0}$ satisfies $L^{(2p)'}$-$L^2$ off-diagonal estimates, which yield for any $N\ge 0$, 
\begin{align*}
	\|h_j\|_2 
	\lesssim t^{-\frac{n}{2}(\frac{1}{(2p)'}-\frac{1}{2})} \left(1+\frac{2^{2j}t}{t}\right)^{-N} \|g\|_{(2p)'} \lesssim  t^{-\frac{n}{2}(\frac{1}{(2p)'}-\frac{1}{2})}  2^{-2jN}.
\end{align*}
Observe that similarly as  above, the support property of $\beta_j^0$ implies $\supp N(W_2\beta_j^i) \subseteq B(x,c2^j\sqrt{t})$, with $c$ independent of $x, t$ and $j$. Also $N(W_2\beta_j^0) \lesssim \calM_2h_j$. 
Thus, by Kolmogorov's Lemma again, 
\begin{align*}
	\|N(W_2\beta_j^0)\|_1
	\lesssim |B(x,c2^j\sqrt{t})|^{1/2} \|h_j\|_2
	\lesssim 2^{-j(2N-\frac{n}{2})} t^{n/4} t^{-\frac{n}{2}(\frac{1}{(2p)'}-\frac{1}{2})} \|g\|_{(2p)'}
	= 2^{-j(2N-\frac{n}{2})} t^{n/4p}.
\end{align*}
Choosing $N>\frac{n}{4}$  allows us to sum over $j$ and gives the assertion for $\beta_j^0$. Finally, for $\beta_j^1$, one can repeat the argument for $\beta_0^1$, and replace \eqref{eq:beta01} by
\begin{align} \label{eq:betaj1}
	\|N(W_2\beta_j^1)\|_1 
	\lesssim  2^{jn/2} t^{n/4} \left(\int_{S_j(B(x,\sqrt{t}))} \int_0^{at} |t^{1/2} \nabla (e^{-(t-s)L^\ast} - e^{-tL^\ast})g(y)|^2 \,\frac{dyds}{s}\right)^{1/2}. 
\end{align}
Combining $L^{(2p)'}$-$L^2$ off-diagonal estimates for $t^{1/2}\nabla e^{-tL^\ast}$ in $t$ with $L^{(2p)'}$ off-diagonal estimates for $rL^\ast e^{-rL^\ast}$ in $r \approx t$ then refines the estimate  \eqref{eq:L2bound} to
\begin{align*}
	\|t^{1/2} \nabla (e^{-(t-s)L^\ast} - e^{-tL^\ast})g\|_{L^2(S_j(B(x,\sqrt{t}))}
	&\lesssim t^{-\frac{n}{2}(\frac{1}{(2p)'}-\frac{1}{2})} \left(1+\frac{2^{2j}t}{t}\right)^{-N} \frac{s}{t} \|g\|_{(2p)'}.
\end{align*}
Plugging the estimate back into \eqref{eq:betaj1} and integrating over $s$  gives
\begin{align*}
\|N(W_2\beta_j^1)\|_1 \lesssim  2^{jn/2} 2^{-2jN} t^{n/4p}.
\end{align*}
Summing over $j$ finally gives the assertion of the lemma provided $N>n/4$.

\subsection{Proof of Proposition \ref{prop:A}: The Carleson measure estimate}
\label{sec:Carl-est}

In order to show \eqref{linear-est2-new},
we use a similar splitting for $\calA$ as in Section \ref{section:newProof}. Write
\begin{align*}
 \calA(\alpha)(t,\,.\,) &= \int_0^t e^{-(t-s)L} \div  \alpha(s,\,.\,) \, ds \\
  & =\int_0^t e^{-(t-s)L} L  (sL)^{-1} (I-e^{-2sL}) s^{1/2} \div s^{1/2} \alpha(s,\,.\,)\,ds \\
  & \qquad + \int_0^{\infty} e^{-(t+s)L} \div \alpha(s,\,.\,)\,ds\\
  & \qquad -  \int_t^\infty e^{-(t+s)L} s^{-1/2} \div s^{1/2}\alpha(s,\,.\,)\,ds\\
  & =:\calA_1(\alpha)(t,\ldot)+ \calA_2(\alpha)(t,\ldot) + \calA_3(\alpha)(t,\ldot).
\end{align*}

In the following, we use without further mention that $L$ has a bounded $H^\infty$ functional calculus in $L^p(\R^n)$ for any $1<p<\infty$ (which follows from \cite[Theorem 3.1]{DR} combined with Lemma \ref{kernel-est-L}).\\

For the estimate on $\calA_1$, we apply the following two lemmata. The first one is an extension    of \cite[Theorem 3.2]{AMP} using the structure of the maximal regularity operator.

\begin{Lemma} \label{lemma:maxreg-new} Suppose $q \in [2,\infty)$.
The operator
\begin{align}  \label{def-maxreg-new} \nonumber
 & \calM^+ : 	T^{\infty,2,q}(\R^{n+1}_+) \to T^{\infty,2,q}(\R^{n+1}_+), \\
  & (\calM^+ F)(t,\,.\,) := \int_0^t Le^{-(t-s)L}  F(s,\,.\,) \,ds,
\end{align}
is bounded. 
\end{Lemma}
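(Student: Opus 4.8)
The plan is to deduce the $T^{\infty,2,q}$ bound for the maximal regularity operator $\calM^+$ from the known $T^{\infty,2}$ result (\cite[Theorem 3.2]{AMP}, stated in the excerpt for $L=-\Delta$ but valid here because Lemma \ref{kernel-est-L} provides $L^2$ Gaussian/off-diagonal decay for $tLe^{-tL}$) together with an off-diagonal upgrade that converts $L^2$ bounds into $L^q$ Whitney-average bounds. First I would reduce to controlling, for a fixed Whitney box $W(\tau,x_0)$, the quantity $\left(\barint_{W(\tau,x_0)}|(\calM^+F)(t,y)|^q\,dydt\right)^{1/q}$ by the $T^{\infty,2}$-norm of $W_2 F$ over a parabolic cone. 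The mechanism: for $(t,y)$ ranging over the Whitney box of scale $\tau$, split $(\calM^+F)(t,\cdot)=\int_0^t Le^{-(t-s)L}F(s,\cdot)\,ds$ into the near-diagonal piece $s\in(t/2,t)$ and the far piece $s\in(0,t/2)$. The analyticity of the semigroup in $L^q$ (here I use that $L$ has bounded $H^\infty$ calculus on $L^q$, $1<q<\infty$, as recalled just before the lemma, hence $Le^{-rL}$ is bounded on $L^q$ with norm $\lesssim r^{-1}$) handles the on-diagonal annular pieces directly on $L^q$, while the $L^2$--$L^q$ off-diagonal bounds coming from the Gaussian estimates in Lemma \ref{kernel-est-L}(i) let me replace $L^q$ averages of $e^{-rL}$(applied to a localized piece) by $L^2$ averages on an enlarged ball with fast decay in the annulus index $j$.

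Concretely, the key steps in order: (1) state the reduction to a single Whitney box and to estimating $W_q(\calM^+F)(\tau,x_0)$ pointwise by $C\,\|W_2 F\|_{T^{\infty,2}}$-type quantities; (2) decompose $F(s,\cdot)=\sum_{j\ge 0}\Eins_{S_j(B(x_0,\sqrt\tau))}F(s,\cdot)$ in space and split the $s$-integral dyadically relative to $t$; (3) on each piece apply $L^2$--$L^q$ off-diagonal estimates for $(t-s)Le^{-(t-s)L}$ (order $N$ decay, $N$ chosen large) to pass from an $L^q$ norm on $B(x_0,\sqrt\tau)$ to an $L^2$ norm on $2^jB(x_0,\sqrt{t})$ times $2^{-jN}$, paying a volume factor $\tau^{n/2q-n/4}$; (4) absorb the time integral: for the far piece use $\int_0^{t/2}(t-s)^{-1}\|\cdot\|\,ds$, which is a harmless logarithmic-type average that is dominated by the maximal regularity structure — here is where I would invoke the $T^{\infty,2}$ boundedness of $\calM^+$ itself rather than re-prove it, by writing the $L^2$-valued inner object as $\calM^+$ applied to the $L^2$-truncated data and quoting \cite[Theorem 3.2]{AMP}; (5) sum the geometric series in $j$ (needs $N>n/4$ or so) and take the supremum over Whitney boxes. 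The vector-valued / componentwise extension is immediate.

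The main obstacle I anticipate is step (4): cleanly organizing the interaction between the dyadic time-splitting and the $L^2$-based maximal regularity estimate without losing the Whitney-average structure — i.e. making sure that after localizing in $L^q$ on the small ball one genuinely lands back inside the hypotheses of the $T^{\infty,2}$ maximal regularity theorem (which is phrased with plain $L^2$ vertical square functions, not Whitney $L^q$ averages). The honest way to do this is to prove the statement directly by the off-diagonal/Schur-type argument used for $\calT$ in Lemma \ref{boundedness-calT} and for $\calR$ in Lemma \ref{Lemma-errorterm}, treating the singular (near-diagonal) part via $L^2$ de Simon boundedness lifted to $T^{\infty,2}$ and treating the $L^q$ gain purely through the Gaussian off-diagonal bounds of Lemma \ref{kernel-est-L}, using that $q\ge 2$ so $L^q(B(x_0,\sqrt\tau))\hookrightarrow$ rescaled $L^2$. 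The delicate bookkeeping is the simultaneous appearance of the Whitney time-average $\barint_\tau^{2\tau}$ on the output and the need to keep the kernel decay integrable in $s$ near $s=t$; choosing the off-diagonal order $N$ large and splitting $\int_{t/2}^t$ further into $\int_{2^{-k-1}t}^{2^{-k}t}$ (as in the $\calR$ proof) resolves it, at the cost of one more geometric summation over $k$.
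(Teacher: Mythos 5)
There is a genuine gap, and it sits exactly where you flag your ``main obstacle'': the near-diagonal part of the integral. Your plan handles $s\in(t/2,t)$ by the bound $\|Le^{-(t-s)L}\|_{L^q\to L^q}\lesssim (t-s)^{-1}$ coming from analyticity/$H^\infty$ calculus, but that bound is not integrable over $s\in(t/2,t)$; the whole point of maximal regularity is that $\int_{t/2}^t Le^{-(t-s)L}F(s,\cdot)\,ds$ cannot be estimated by putting absolute values inside. Your fallback --- treat the singular part via the $L^2$ (de Simon/\cite{AMP}) result lifted to $T^{\infty,2}$ and recover the $L^q$ gain ``purely through Gaussian off-diagonal bounds, using that $q\ge 2$ so $L^q(B)\hookrightarrow$ rescaled $L^2$'' --- does not close either: the inclusion of $L^q$ into $L^2$ on a ball goes the right way for the \emph{input} but the wrong way for the \emph{output}. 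You must dominate the Whitney $L^q$ average of $\calM^+F$, which is \emph{larger} than its $L^2$ average, and the only source of improved integrability is the $L^2$--$L^q$ smoothing of $e^{-rL}$, which costs $r^{-\frac n2(\frac12-\frac1q)}$ at spatial scale $\sqrt r$. Near the diagonal, $r=t-s\to 0$, so this factor (on top of $(t-s)^{-1}$ from $L$) is not summable at the fixed Whitney scale $\sqrt\tau$, and no choice of the off-diagonal order $N$ rescues the estimate. Some genuinely $L^q$-level singular-integral input is unavoidable.

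The paper supplies exactly that input, together with a structural trick absent from your outline. It first gets $\calM^+:T^{\infty,2,q}\to T^{\infty,2}$ from \cite{AMP} plus the embedding $T^{\infty,2,q}\hookrightarrow T^{\infty,2}$, as you do; it then subtracts the averaged evolution, setting $\widetilde{\calM}^+F(t,\cdot)=\calM^+F(t,\cdot)-\barint_{t/4}^{t/2}e^{-(t-s)L}\calM^+F(s,\cdot)\,ds$. The subtracted term is controlled in $T^{\infty,2,q}$ by Lemma \ref{lemma:revH} (semigroup smoothing over a time of order $t$ upgrades $T^{\infty,2}$ control to Whitney $L^q$ averages), while $\widetilde{\calM}^+$ enjoys the time-localisation formula \eqref{eq:localise}, so on a Whitney box it only sees $F$ at comparable times. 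For the spatially local piece the paper then invokes $L^q$-maximal regularity of $L$ on $L^{q}((0,\infty);L^{q}(\R^n))$ (Coulhon--Duong \cite{CD1}, available thanks to the Gaussian bounds of Lemma \ref{kernel-est-L}) --- this is the missing $L^q$ ingredient your proposal never names --- and for the far annuli it uses $L^q$ off-diagonal estimates for the semigroup plus change of angle in tent spaces \cite{A}. To repair your outline, replace ``analyticity on $L^q$'' for the near piece by the $L^q(L^q)$ maximal-regularity theorem, and adopt (or reinvent) the subtraction/localisation step; without both, the bookkeeping you worry about in your last paragraph cannot be made to close.
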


\begin{proof}
According to Lemma \ref{kernel-est-L} and \cite[Lemma 1.19]{ATbook}, $(tLe^{-tL})_{t>0}$ satisfies Gaussian estimates, therefore in particular the weaker $L^2$ off-diagonal estimates  of \cite[Definition 2.3]{AMP}.
Hence, we can apply \cite[Theorem 3.2]{AMP} to obtain that  $\calM^+:  T^{\infty,2}(\R^{n+1}_+) \to T^{\infty,2}(\R^{n+1}_+)$. Combining this  with the embedding $T^{\infty,2,q}(\R^{n+1}_+) \hookrightarrow T^{\infty,2}(\R^{n+1}_+)$ as a mere application of H\"older's inequality, we obtain 
\begin{align} \label{eq:maxreg-1}
	\calM^+: T^{\infty,2,q}(\R^{n+1}_+) \to T^{\infty,2}(\R^{n+1}_+)
\end{align}
is bounded. 
To show it is bounded into the smaller space $T^{\infty,2,q}(\R^{n+1}_+)$, 
we argue as follows. 
Set 
\begin{align*}
	\widetilde{\calM}^+ F(t,\,.\,) := \calM^+F(t,\,.\,) - \barint_{\frac{t}{4}}^{\frac{t}{2}} e^{-(t-s)L}\calM^+F(s,\,.\,)\,ds.
\end{align*}
According to Lemma \ref{lemma:revH} and \eqref{eq:maxreg-1}, we have for the last term
\begin{align*}
	\|\barint_{\frac{t}{4}}^{\frac{t}{2}} e^{-(t-s)L}\calM^+F(s,\,.\,)\,ds\|_{T^{\infty,2,q}} 
	\lesssim 
	\|\calM^+F\|_{T^{\infty,2}}
	\lesssim \|F\|_{T^{\infty,2,q}}. 
\end{align*}
Thus $\calM^{+}:T^{\infty,2,q}(\R^{n+1}_+) \to T^{\infty,2,q}(\R^{n+1}_+)$ is bounded if and only if $\widetilde{\calM}^+:T^{\infty,2,q}(\R^{n+1}_+) \to T^{\infty,2,q}(\R^{n+1}_+)$ is bounded.
To show the latter, 
 observe that
\begin{align*}
	\widetilde{\calM}^+F(t,\,.\,)=\barint_{\frac{t}{4}}^{\frac{t}{2}}\int_s^t Le^{-(t-\sigma)L}F(\sigma,\,.\,)\,d\sigma ds,
\end{align*}
therefore, for any $\tau>0$  and $t\in (\tau, 2\tau)$, we have the time localisation formula
\begin{equation}
\label{eq:localise}
\widetilde{\calM}^+F(t,\,.\,)= \widetilde{\calM}^+(\Eins_{(\tau/4, \tau)}F)(t,\,.\,),
\end{equation}
 hence for fixed $(\tau,x)$, 
\begin{align*}
	W_{q}(\widetilde{\calM}^+F)(\tau,x)
	= W_{q}(\widetilde{\calM}^+(\Eins_{(\frac{\tau}{4},2\tau)}F))(\tau,x).
\end{align*}
Let $F \in T^{\infty,2,q}(\R^{n+1}_+)$. 
Fix $(r,x_0) \in \R^{n+1}_+$,  
 and set $B:=B(x_0,\sqrt{r})$. By Minkowski's inequality,
\begin{align*}
	&\left(r^{-n/2} \int_0^r \int_B (W_{q}(\widetilde{\calM}^{+}F)(\tau,x))^2 \,dxd\tau\right)^{1/2}\\
	&\qquad  \leq \sum_{j=0}^\infty \left(r^{-n/2}  \int_0^r \int_B (W_{q}(\widetilde{\calM}^{+}\Eins_{S_j(B(x,\sqrt{\tau}))} F)(\tau,x))^2 \,dxd\tau\right)^{1/2} 
	=: \sum_{j=0}^\infty I_j. 
\end{align*}
Consider first the case $j \leq 3$. 
 According to \cite[Theorem 1.2]{CD1}, combined with Lemma \ref{kernel-est-L}, $L$ has $L^q$-maximal regularity on $L^q(\R^n)$,  that is,  ${\calM}^+$ is bounded  on $L^{q}((0,\infty); L^{q}(\R^n))$,   which also implies boundedness of  $\widetilde{\calM}^+$ on $L^{q}((0,\infty); L^{q}(\R^n))$. 
Using this bound and \eqref{eq:localise}, one obtains
\begin{align*}
	W_{q}(\widetilde{\calM}^+(\Eins_{B(x,8\sqrt{\tau})}F))(\tau,x)
	\lesssim \left(\tau^{-n/2-1} \int_{\tau/4}^{2\tau} \int_{B(x,8\sqrt{\tau})} |F(s,y)|^{q} \,dsdy\right)^{1/q}
	=C\widetilde{W}_{q}(F)(\tau,x),
\end{align*}
where $\widetilde{W}_{q}$ denotes the average over the rescaled Whitney box $(\frac{\tau}{4},2\tau)\times B(x,8\sqrt{\tau})$. 
By covering this Whitney box by boundedly many Whitney boxes of standard size, one obtains 
\begin{align*}
	\left(r^{-n/2} \int_0^r \int_B (W_{q}(\widetilde{\calM}^+\Eins_{B(x,8\sqrt{\tau})}F)(\tau,x))^2\,dxd\tau\right)^{1/2}
	\lesssim \|F\|_{T^{\infty,2,q}}. 
\end{align*}
Consider now the case $j\geq 4$. Denote $F_j(s,y):=F(s,y)\Eins_{S_j(B(x,\sqrt{\tau}))}(y)\Eins_{(0,2r)}(s)$. 
Using Minkowski's inequality and $L^{q}$ off-diagonal estimates for the semigroup, which are a consequence of the kernel estimates stated in Lemma \ref{kernel-est-L} (i),  one obtains for fixed $(\tau,x) \in (0,r) \times B$, $t \in (\tau,2\tau)$ and any $N\geq 1$,
\begin{align*}
	\|\widetilde{\calM}^+ F_j(t,\,.\,)\|_{L^q(B(x,\sqrt{\tau}))}
	&\leq \barint_{\frac{t}{4}}^{\frac{t}{2}} \int_{s}^t (t-\sigma)^{-1}\|(t-\sigma)Le^{-(t-\sigma)L}F_j(\sigma,\,.\,)\|_{L^q(B(x,\sqrt{\tau}))}\,d\sigma ds\\
	&\lesssim \int_{\frac{t}{4}}^t (t-\sigma)^{-1} \left(\frac{t-\sigma}{2^{2j}\tau}\right)^{N}\|F_j(\sigma,\,.\,)\|_{L^q(B(x,\sqrt{\tau}))}\,d\sigma\\
	&\lesssim 2^{-2jN} \tau^{-1} \int_{\frac{\tau}{4}}^{2\tau} \|F_j(\sigma,\,.\,)\|_{L^q(B(x,\sqrt{\tau}))}\,d\sigma.
\end{align*}
Since the last expression is independent of $t$ and by definition of $F_j$, we therefore have
\begin{align*}
	&\left(\tau^{-\frac{n}{2}-1} \iint_{W(\tau,x)} |\widetilde{\calM}^+ F_j(t,y)|^{q}\,dydt\right)^{1/q}
	 \lesssim 2^{-2jN} \left(\tau^{-\frac{n}{2}-1}\int_{\frac{1}{4}\tau}^{2\tau} \int_{2^jB(x,\sqrt{\tau})}|F(\sigma,y)|^{q}\,dyd\sigma\right)^{1/q}.
\end{align*}
By change of angle in tent spaces \cite[Theorem 1.1]{A}, choosing $N$ large enough and summing over $j$, one obtains the assertion.
\end{proof}

\begin{Lemma} \label{lemma:revH-space}
For $s>0$, denote $T_s=(sL)^{-1} (I-e^{-2sL}) s^{1/2} \div$.   Suppose $q \in [2,\infty)$, $\tilde q\in [q,\infty)$ with $\tilde q\le q^*$ (with $q^*= \frac{nq}{n-q}$ if $q<n$ and $q^*=\infty$ otherwise)  and $r \in [2,\infty)$. 
Then the operator 
\begin{align*}
	& \calT: T^{\infty,2,q,r}(\R^{n+1}_+;\C^n) \to T^{\infty,2,\tilde q,r}(\R^{n+1}_+), \\
	& (\calT F)(s,\,.\,) := T_s (F(s,\,.\,)),
\end{align*}
is bounded.  
\end{Lemma}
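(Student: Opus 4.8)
The plan is to exploit that $\calT$ acts fibrewise in the time variable, so that the argument reduces to a Whitney-average analogue of Lemma~\ref{boundedness-calT} (which follows \cite[Theorem~5.2]{HvNP}): first I would establish single-scale $L^q$--$L^{\tilde q}$ mapping properties of the operators $T_s$ together with off-diagonal decay, and then feed these into the tent-space machinery, now with Whitney averages in place of pointwise values. I would start from the representation
\[
	T_s=(sL)^{-1}(I-e^{-2sL})\,s^{1/2}\div=\int_0^2 e^{-\sigma sL}\,s^{1/2}\div\,d\sigma=s^{1/2}\int_0^2 e^{-\sigma sL}\div\,d\sigma,
\]
using $\int_0^2 e^{-\sigma sL}\,d\sigma=(sL)^{-1}(I-e^{-2sL})$.

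By Lemma~\ref{kernel-est-L}, applied also to $L^\ast$, and writing $e^{-tL}\div=e^{-(t/2)L}\circ\big(e^{-(t/2)L}\div\big)$, the operators $e^{-tL}\div$ satisfy $L^q$--$L^{\tilde q}$ off-diagonal estimates
\[
	\norm{\Eins_E\, e^{-tL}\div\,\Eins_{\tilde E}}_{L^q(\R^n)\to L^{\tilde q}(\R^n)}\lesssim t^{-\frac12-\frac n2(\frac1q-\frac1{\tilde q})}\exp\!\big(-c\,t^{-1}\dist(E,\tilde E)^2\big)
\]
for all $q\ge 2$ and $\tilde q\ge q$ (combine the Gaussian bounds of Lemma~\ref{kernel-est-L}(i) with the off-diagonal form of the $L^q$-boundedness, $q\ge2$, of $e^{-(t/2)L}\div$, the latter being dual to the gradient/Riesz estimates for $L^\ast$). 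Inserting this into the integral, the substitution $u=\dist(E,\tilde E)^2/(\sigma s)$ shows that the resulting $\sigma$-integral converges with arbitrary polynomial gain when $\dist(E,\tilde E)\ge\sqrt s$, and that it converges already on the diagonal whenever $\tfrac12+\tfrac n2(\tfrac1q-\tfrac1{\tilde q})<1$, i.e. $\tilde q<q^{*}$. This gives, for $q\ge2$ and $q\le\tilde q<q^{*}$, both $\norm{T_s}_{L^q\to L^{\tilde q}}\lesssim s^{-\frac n2(\frac1q-\frac1{\tilde q})}$ and, for every $K$,
\[
	\norm{\Eins_E\, T_s\,\Eins_{\tilde E}}_{L^q(\R^n)\to L^{\tilde q}(\R^n)}\lesssim_K s^{-\frac n2(\frac1q-\frac1{\tilde q})}\big(1+s^{-1/2}\dist(E,\tilde E)\big)^{-K}.
\]
At the endpoint $\tilde q=q^{*}$ (necessarily $q<n$) only the diagonal bound is missing: there I would write $T_s=\psi(sL)\,(L^{-1/2}\div)$ with $\psi(z)=z^{-1/2}(1-e^{-2z})$ bounded and holomorphic on a sector, and use that $L^{-1/2}\div$ is bounded on $L^q$, that $1-e^{-2sL}$ is uniformly bounded on $L^{q^{*}}$ by the $H^\infty$-calculus of $L$, and that $(sL)^{-1/2}=s^{-1/2}L^{-1/2}$ realises the Sobolev embedding associated with $L$ (through the Gaussian bounds and the Riesz estimates, using $q\ge2$), giving $\norm{\psi(sL)}_{L^q\to L^{q^{*}}}\lesssim s^{-1/2}=s^{-\frac n2(\frac1q-\frac1{q^{*}})}$; the off-diagonal estimate above is unaffected, its proof never using the diagonal.

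With these inputs the tent-space estimate is routine. Fix $(t,x)\in\R^{n+1}_+$; for $s\in(t,2t)$ decompose $F(s,\,.\,)=\sum_{k\ge0}\Eins_{S_k(B(x,\sqrt t))}F(s,\,.\,)$ and estimate $\norm{T_sF(s,\,.\,)}_{L^{\tilde q}(B(x,\sqrt t))}$ annulus by annulus, using $\norm{T_s}_{L^q\to L^{\tilde q}}$ for the $O(1)$ innermost annuli and the off-diagonal bound (with $\dist\sim2^k\sqrt t\gtrsim\sqrt s$) for $k$ large. Passing to averages, all powers of $t$ cancel by scaling, so pointwise in $s$,
\[
	\Big(\barint_{B(x,\sqrt t)}\abs{T_sF(s,\,.\,)(y)}^{\tilde q}\,dy\Big)^{1/\tilde q}\lesssim\sum_{k\ge0}2^{-k(K-n/q)}\Big(\barint_{2^kB(x,\sqrt t)}\abs{F(s,y)}^q\,dy\Big)^{1/q};
\]
taking the $L^r\big(\tfrac{ds}{t}\big)$-norm over $(t,2t)$ and applying Minkowski's inequality in the $k$-sum gives
\[
	W_{\tilde q,r}(\calT F)(t,x)\lesssim\sum_{k\ge0}2^{-k(K-n/q)}\Big(\barint_t^{2t}\Big(\barint_{2^kB(x,\sqrt t)}\abs{F(s,y)}^q\,dy\Big)^{r/q}ds\Big)^{1/r}.
\]
Each summand is a dilated Whitney average of $F$; covering $2^kB(x,\sqrt t)$ by $\sim2^{kn}$ Whitney boxes of standard size and invoking the boundedness of the associated dilation operator on $T^{\infty,2}(\R^{n+1}_+)$ together with the change-of-aperture theorem for tent spaces (\cite[Theorem~1.1]{A}), the $T^{\infty,2}$-norm of the $k$-th summand is $\lesssim2^{ckn}\norm{W_{q,r}F}_{T^{\infty,2}}=2^{ckn}\norm{F}_{T^{\infty,2,q,r}}$. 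Choosing $K$ large enough, which is allowed since the off-diagonal decay above is of arbitrary polynomial order, and summing the geometric series in $k$ yields $\norm{\calT F}_{T^{\infty,2,\tilde q,r}}=\norm{W_{\tilde q,r}(\calT F)}_{T^{\infty,2}}\lesssim\norm{F}_{T^{\infty,2,q,r}}$.

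The main obstacle I anticipate is the endpoint $\tilde q=q^{*}$: there the integral representation of $T_s$ degenerates logarithmically on the diagonal, and one must feed in, beyond Lemma~\ref{kernel-est-L}, the bounded $H^\infty$-calculus of $L$ on $L^{q^{*}}$ and the $L^q$-Sobolev embedding associated with $L$. Everything else — the $\sigma$-integral estimate producing arbitrary off-diagonal decay, and the conversion of the dilated Whitney averages back to $T^{\infty,2}$-norms via dilation and change of aperture — is bookkeeping entirely parallel to Lemma~\ref{boundedness-calT}.
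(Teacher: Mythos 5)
Your proposal is correct and follows essentially the same route as the paper: $L^q$--$L^{\tilde q}$ off-diagonal bounds for $T_s$ obtained from the representation $T_s=s^{-1/2}\int_0^{2s}e^{-uL}\div\,du$, with the endpoint $\tilde q=q^*$ handled via the boundedness of $L^{-1/2}\div$ on $L^q$ and of $L^{-1/2}:L^q\to L^{q^*}$, followed by an annular decomposition and change of angle in tent spaces \cite{A}. The only (harmless) deviation is organisational: you derive the non-endpoint diagonal bounds directly from the semigroup representation and reserve the Kato/Riesz and Sobolev input for $\tilde q=q^*$, whereas the paper proves the $L^q\to L^{q^*}$ bound first and deduces the intermediate exponents from it.
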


\begin{proof}
We first  obtain $L^q-L^{\tilde q}$ off diagonal estimates for $(T_{s})_{s>0}$ 
\begin{align} \label{eq:Lq-Ts}
 	\|\Eins_E T_s \Eins_{\tilde E}\|_{L^q \to L^{\tilde q}}
 		\leq C s^{-n/2(1/q -1/\tilde q)}
 		\exp(-cs^{-1}\dist(E,\tilde E)^2)
\end{align}
for all Borel sets $E,\tilde E$ and all $s>0$.

Assuming first $q<n$, we show  $\|T_s\|_{L^q\to L^{q^\ast}} \lesssim s^{1/2}$. 
The solution of the Kato square root problem \cite{HLMc,AHLMcT} in $L^2$ with its extension to $L^p$ spaces (see \cite[Theorem 4.1]{ATbook} or \cite{memoirs}) implies that $\nabla (L^\ast)^{-1/2}$ is bounded in $L^{q'}$ as $1<q'\le 2$, therefore 
$$L^{-1/2}\div : L^q(\R^n;\C^n) \to L^q(\R^n)$$ 
is bounded. Moreover, 
$L^{-1/2}: L^q(\R^n) \to L^{q^\ast}(\R^n)$
is bounded, see \cite[Proposition 5.3]{memoirs}. Combining this with the fact that the semigroup $(e^{-tL})_{t>0}$ is bounded on $L^{q^\ast}$ gives the claim. This in particular yields \eqref{eq:Lq-Ts}  with $ \tilde q=q^\ast$ and when  $\dist(E,\tilde E) \leq  c s^{1/2}$. For $\dist(E,\tilde E) \geq  s^{1/2}$, we can obtain the stronger $L^2$-$L^\infty$ off-diagonal estimates  from Lemma \ref{kernel-est-L} by writing
\begin{align}\label{Ts}
	T_s=-s^{-1/2} \int_0^{2s}e^{-uL} \div \,du,
\end{align}
which then gives
\begin{align*}
	\|\Eins_E T_s \Eins_{\tilde E}\|_{L^q \to L^{q^\ast}}
	&\leq s^{-1/2} \int_0^{2s} \|\Eins_E e^{-uL}\div\Eins_{\tilde E}\|_{L^q \to L^{q^\ast}}\,du\\
	&\lesssim s^{-1/2}\int_0^{2s} u^{-1} \exp(-cu^{-1}\dist(E,\tilde E)^2)\,du\\
	&\lesssim s^{-1/2} \exp(-c's^{-1}\dist(E,\tilde E)^2).
\end{align*}
Hence we have shown \eqref{eq:Lq-Ts} with $\tilde q=q^\ast$ and this implies \eqref{eq:Lq-Ts} for any $q\le \tilde q \le q^\ast$.
When $q\ge n$, the first part of the argument does not work but one can instead use  \eqref{Ts} and still obtain an integrable factor when $\dist(E,\tilde E) \leq  c s^{1/2}$ when plugging in the $L^q$ to $L^{\tilde q}$ norm. Details are left to the reader. 

Now  \eqref{eq:Lq-Ts}  implies for $(\tau,x) \in \R^{n+1}_+$ and $s \in (\tau,2\tau)$
\begin{align*}
	\left(\barint_{B(x,\sqrt{\tau})} |T_sF(s,\,.\,)(y)|^{\tilde q} \,dy \right)^{1/\tilde q}
	&\leq \sum_{j=0}^\infty \left(\barint_{B(x,\sqrt{\tau})} |T_s\Eins_{S_j(B(x,\sqrt{\tau}))}F(s,\,.\,)(y)|^{\tilde q} \,dy \right)^{1/\tilde q}\\
	& \lesssim \sum_{j=0}^\infty\left(\frac{s}{2^{2j}\tau}\right)^N \left(\tau^{-\frac{n}{2}}\int_{2^jB(x,\sqrt{\tau})} |F(s,y)|^q\,dy\right)^{1/q}\\
	& \lesssim \sum_{j=0}^\infty 2^{-j(2N-\frac{n}{q})} \left(\barint_{2^jB(x,\sqrt{\tau})}|F(s,y)|^q\,dy\right)^{1/q}.
\end{align*}
Choosing $N$ large enough and using change of angle in tent spaces \cite[Theorem 1.1]{A}, we therefore have
\begin{align*}
	&\|\calT F\|_{T^{\infty,2,\tilde q,r}}
	=\|W_{q^\ast,r}(\calT F)\|_{T^{\infty,2}}\\
	&\lesssim \sum_{j=0}^\infty 2^{-j(2N-\frac{n}{q}-\frac{n}{2})}
	\sup_{(r,x_0)\in \R^{n+1}_+} \left(\int_0^r\barint_{2^{j+1}B(x_0,\sqrt{r})} \left(\barint_\tau^{2\tau}\left(\barint_{2^jB(x,\sqrt{\tau})} |F(s,y)|^q\,dy\right)^{r/q}\,ds\right)^{2/r}\,dxd\tau\right)^{1/2}\\
	&\lesssim \|F\|_{T^{\infty,2,q,r}}.
\end{align*}

\end{proof}

We use the theory of Hardy spaces associated with operators for the estimate of $\calA_2$.

\begin{Lemma} \label{lemma:A2-new}
The operator 
 \begin{align*} 
 &\calA_{2}^\ast : T^{1,2}(\R^{n+1}_+) \to T^{1,\infty,2}(\R^{n+1}_+;\C^n),\\
  &(\calA_{2}^\ast G)(s,\,.\,) = \nabla e^{-s L^*}  \int_0^{\infty} e^{-t L^*} G(t,\,.\,) \,dt,
\end{align*}
is bounded.
\end{Lemma}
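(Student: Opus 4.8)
The plan is to follow the scheme of Step~3(ii) of Section~\ref{section:newProof}, but with the classical Hardy space $H^{1}$ replaced by the Hardy and Hardy--Sobolev spaces adapted to $L^{*}$ from \cite{HMMc}. Write $(\calA_{2}^{\ast}G)(s,\cdot)=\nabla e^{-sL^{*}}h_{G}$ with
\[
 h_{G}:=\int_{0}^{\infty}e^{-tL^{*}}G(t,\cdot)\,dt ,
\]
so that the claim factors into two boundedness statements: \emph{(i)} the map $G\mapsto h_{G}$ is bounded from $T^{1,2}(\R^{n+1}_{+})$ into the adapted Hardy--Sobolev space $H^{1,1}_{L^{*}}(\R^{n})$ (the space introduced before Corollary~\ref{cor:bmoL}, which under Lemma~\ref{kernel-est-L} coincides with the classical $\dot F^{1,2}_{1}\cong\dot H^{1,1}$); and \emph{(ii)} the map $h\mapsto\big((s,x)\mapsto\nabla e^{-sL^{*}}h(x)\big)$ is bounded from $H^{1,1}_{L^{*}}(\R^{n})$ into $T^{1,\infty,2}(\R^{n+1}_{+};\C^{n})$. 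Composing \emph{(i)} and \emph{(ii)} gives the lemma; as usual it suffices to establish the estimates for $G$ in a dense subclass (say $G\in T^{1,2}\cap T^{2,2}$ with compact support in $\R^{n+1}_{+}$), for which $h_{G}$ is an absolutely convergent integral, and then argue by density.

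For \emph{(i)}, recall that the $H^{1,1}_{L^{*}}$-norm of $h$ is equivalent to the $H^{1}_{L^{*}}$-norm of $g:=L^{*1/2}h$; hence it is enough to bound $g_{G}=\int_{0}^{\infty}L^{*1/2}e^{-tL^{*}}G(t,\cdot)\,dt$ in $H^{1}_{L^{*}}(\R^{n})$. Passing to elliptic scaling by the substitution $t=r^{2}$ and the isometry $G\mapsto G_{\el}$, $G_{\el}(r,\cdot):=rG(r^{2},\cdot)$, of $T^{1,2}$ onto $T^{1,2}_{\el}$ recalled after Definition~\ref{def:tent}, one computes
\[
 g_{G}=2\int_{0}^{\infty}\psi(r^{2}L^{*})\,G_{\el}(r,\cdot)\,\frac{dr}{r},
 \qquad \psi(\lambda)=\lambda^{1/2}e^{-\lambda},
\]
a function vanishing to order $1/2$ at $0$ and decaying exponentially at $\infty$. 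This is precisely the $\pi$-operator of the theory of Hardy spaces associated with a sectorial operator, and it is bounded from $T^{1,2}_{\el}(\R^{n+1}_{+})$ into $H^{1}_{L^{*}}(\R^{n})$ by \cite{HMMc} (see also \cite{AMR}, \cite{HM}); only the $L^{2}$ off-diagonal bounds for $(e^{-tL^{*}})_{t>0}$ of Lemma~\ref{kernel-est-L} are used. Thus $\|g_{G}\|_{H^{1}_{L^{*}}}\lesssim\|G_{\el}\|_{T^{1,2}_{\el}}=\|G\|_{T^{1,2}}$ and $h_{G}\in H^{1,1}_{L^{*}}$ with the required control.

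For \emph{(ii)}, write $\nabla e^{-sL^{*}}h=\nabla L^{*-1/2}e^{-sL^{*}}g$ with $g=L^{*1/2}h\in H^{1}_{L^{*}}$ and use the molecular decomposition $g=\sum_{k}\lambda_{k}m_{k}$ of $H^{1}_{L^{*}}$ from \cite{HMMc}, with $\sum_{k}|\lambda_{k}|\lesssim\|h\|_{H^{1,1}_{L^{*}}}$ and each $m_{k}=L^{*M}b_{k}$ a molecule adapted to a ball $B_{k}=B(x_{k},r_{k})$, the integer $M$ large and fixed. Since $N$ and $W_{2}$ are sublinear, it suffices to prove $\|N(W_{2}(\nabla L^{*-1/2}e^{-sL^{*}}m))\|_{L^{1}(\R^{n})}\lesssim1$ uniformly for a single such molecule $m$ adapted to $B=B(x_{B},r_{B})$. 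One splits $\R^{n}=4B\cup\bigcup_{j\geq2}S_{j}(B)$: on $4B$ one uses Cauchy--Schwarz together with the $L^{2}$-boundedness of the non-tangential maximal/square-function operators attached to $(e^{-sL^{*}})_{s>0}$, the $L^{2}$-boundedness of $\nabla L^{*-1/2}$ (the Kato estimate, \cite{AHLMcT}), and the $L^{2}$ size bound for $m$, to control $\|N(W_{2}(\cdot))\|_{L^{1}(4B)}$; on each annulus $S_{j}(B)$, $j\geq2$, one splits the cone over a point $x_{0}\in S_{j}(B)$ according to whether its height $s$ satisfies $\sqrt{s}\lesssim2^{j}r_{B}$ or $\sqrt{s}\gtrsim2^{j}r_{B}$, using in the first regime the $L^{2}$ off-diagonal (Gaussian) decay of $\sqrt{s}\,\nabla e^{-sL^{*}}$ from Lemma~\ref{kernel-est-L}(ii) between $B$ and the Whitney region over $S_{j}(B)$, and in the second regime the fast decay of $(sL^{*})^{M}e^{-sL^{*}}b$ for large $s$ (afforded by $m\in\overline{R(L^{*})}$, i.e.\ by the factor $M$) together with the $L^{2}$ bound for $\sqrt{s}\,\nabla e^{-sL^{*}}$. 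Integration over $S_{j}(B)$, of measure $\sim(2^{j}r_{B})^{n}$, then produces a summable factor $2^{-j\epsilon}$ once $M$ is large enough, and one checks that all bounds are independent of $r_{B}$. Alternatively, if a non-tangential maximal characterisation of $H^{1,1}_{L^{*}}$ in terms of $\|N(W_{2}(\nabla e^{-s L^{*}}\,\cdot\,))\|_{L^{1}}$ is available from \cite{HMMc}, then \emph{(ii)} is immediate.

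The crux of the argument is step \emph{(ii)}. Because $\nabla$ and $e^{-sL^{*}}$ do not commute --- and $[\nabla,L^{*}]$ is not a bounded operator, $A$ being merely $L^{\infty}$ --- one cannot write $\nabla e^{-sL^{*}}h=e^{-sL^{*}}(\nabla h)$ and simply invoke the classical Fefferman--Stein maximal characterisation of $H^{1}$ as in the Navier--Stokes case; one is forced to work with the $L^{*}$-adapted Hardy--Sobolev molecules and the off-diagonal technology of Lemma~\ref{kernel-est-L}. The delicate point in doing so is that $W_{2}(\nabla e^{-sL^{*}}m)$ carries a weight of order $s^{-1/2}$, so one needs genuine decay of $\sqrt{s}\,\nabla e^{-sL^{*}}m$ both as $s\to0$ and as $s\to\infty$; this is exactly what the structure $m=L^{*M}b$ with $M\geq1$ provides. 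Step \emph{(i)}, by contrast, is a routine instance of the $\pi$-operator estimate once the parabolic/elliptic rescaling of the tent space recalled after Definition~\ref{def:tent} (cf.\ \cite{CMS}) is taken into account.
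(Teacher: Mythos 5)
Your route is genuinely different from the paper's. The paper never works with molecules of an adapted Hardy--Sobolev space: it maps $G\mapsto\int_0^\infty\nabla e^{-tL^{*}}G(t,\cdot)\,dt$ into the \emph{classical} $H^1(\R^n;\C^n)$ (using the Kato square function estimate for the $L^2$ bound and the $L^1$--$L^2$ off-diagonal bounds of Lemma~\ref{kernel-est-L}(ii) \`a la \cite{CMS}), identifies the resulting datum with an element of $H^1_D=H^1_{DB^{*}}$ (here \cite[Corollary 13.3]{AS} is used, exploiting that the coefficients are real), and then obtains the non-tangential maximal estimate from \cite[Theorem 9.1, Remark 9.8]{AS}, the point being that $\nabla e^{-sL^{*}}$ applied to the potential is the tangential part of $e^{-sDB^{*}DB^{*}}$. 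Your step \emph{(i)} (the $\pi$-operator bound $T^{1,2}\to H^1_{L^{*}}$ with $\psi(\lambda)=\lambda^{1/2}e^{-\lambda}$, after the parabolic-to-elliptic rescaling) is correct and plays the role of the paper's classical-$H^1$ step; and a direct molecular proof of step \emph{(ii)} is indeed possible -- the paper itself says so -- so the overall strategy is viable.

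The gaps are in step \emph{(ii)}, which is where all the content of the lemma sits. First, the local estimate on $4B$ cannot be obtained, as you state it, by combining the $L^2$ boundedness of $N(W_2(e^{-sL^{*}}\cdot))$ with the Kato bound for $\nabla L^{*-1/2}$: the gradient sits \emph{inside} the maximal function and does not commute with the semigroup (this non-commutation is precisely the difficulty the lemma is about). What you actually need is the global estimate $\|N(W_2(\nabla e^{-sL^{*}}u))\|_{L^2}\lesssim\|\nabla u\|_{L^2}$, which is a genuine theorem (the $p=2$ case of the non-tangential regularity estimates, e.g. from the first-order framework of \cite{AS}, or via a Caccioppoli argument reducing to square functions), not a formal composition of two bounded operators. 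Second, your fallback -- that a non-tangential maximal characterisation of $H^{1,1}_{L^{*}}$ in terms of $\|N(W_2(\nabla e^{-sL^{*}}\cdot))\|_{1}$ might be ``available from \cite{HMMc}'' -- is not: \cite{HMMc} gives square-function and molecular characterisations, and maximal-function statements only for the semigroup itself, not its gradient; the gradient statement is exactly \cite[Theorem 9.1]{AS}, i.e.\ what the paper invokes. Third, on the annuli the operator acting on the molecule is $\nabla e^{-sL^{*}}L^{*-1/2}$, and $L^{*-1/2}$ is nonlocal, so Lemma~\ref{kernel-est-L}(ii) alone yields no off-diagonal decay; one must interlace the subordination formula for $L^{*-1/2}$ with the cancellation $m=L^{*M}b$ to control simultaneously the far-space regime and the large time parameters (both $s$ and the subordination variable), and one must also justify applying the sublinear operator termwise to the molecular series (a density/Fatou argument, say for $g\in H^1_{L^{*}}\cap L^2$). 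All of this can be carried out -- it is the ``long and tedious'' direct proof alluded to in the paper -- but as written your sketch glosses over the key non-commutation issue and cites sources for statements they do not contain.
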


Note that we cannot commute $\nabla$ and the semigroup as in Navier-Stokes. Well, in fact, one can if we imbed the scalar operator in a vector operator as in the proof below. 

\begin{proof}
We outline how to obtain the result from \cite[Theorem 9.1]{AS}. A direct proof is possible but is long and tedious. 
Recall that $L=-\div (A \nabla)$ with $A \in L^\infty(\R^n;\calL(\R^n))$, $\Re(A(x)) \geq \kappa I >0$ for a.e. $x \in \R^n$. 
Associated with $L^\ast$ are the operators 
\begin{align*}
D:=\begin{bmatrix} 0 & \div  \\ -\nabla & 0 \end{bmatrix}, \qquad B^\ast :=\begin{bmatrix}1  & 0 \\ 0 &  A^\ast \end{bmatrix},
\end{align*}
and
\begin{align*}
DB^\ast:=\begin{bmatrix} 0 & \div A^\ast \\ -\nabla & 0 \end{bmatrix}, \qquad DB^\ast DB^\ast:=\begin{bmatrix}-\div A^\ast \nabla & 0 \\ 0 & -\nabla \div A^\ast \end{bmatrix},
\end{align*}
the latter acting as  bisectorial and sectorial operators in $L^2(\R^n;\C^{1+n})$,
respectively. Following \cite{AS}, for a vector $v=\begin{bmatrix}v_\no \\ v_\ta \end{bmatrix} \in \C^{1+n}$, we call $v_\ta \in \C^n$  the tangential part of $v$. Observe that 
$-\nabla e^{-sL^\ast} \int_0^\infty e^{-tL^\ast} G(t,\,.\,)\,dt$ is the tangential part of 
\begin{align*}
	DB^\ast e^{-sDB^\ast DB^\ast}\int_0^\infty \begin{bmatrix} e^{-tL^\ast}G(t,\,.\,)\\ 0\end{bmatrix} \,dt,
\end{align*}
hence the tangential part of $e^{-sDB^\ast DB^\ast} h$, with 
\begin{align*}
	h=\begin{bmatrix} 0 \\ - \int_0^\infty \nabla e^{-tL^\ast} G(t,\,.\,) \,dt \end{bmatrix}. 
\end{align*}
An equivalent formulation of the Kato square root estimate for $L^\ast$ \cite{AHLMcT, HLMc} is the square function estimate
\begin{equation*}
\label{sf}
\iint_{\R^{n+1}_{+}} |(e^{-tL}\div F)(x)|^2\, dxdt \lesssim \|F\|_{2}^2
\end{equation*}
for all $F\in L^2(\R^n; \C^n)$, hence $(t,x)\mapsto (e^{-tL}\div F)(x)$ is bounded from $L^2$ to $T^{2,2}$ and by duality this defines the bounded map \begin{align*} 
& \calS : T^{2,2}(\R^{n+1}_+) \to L^2(\R^n; \C^n),\\
 & \calS G =  \int_0^{\infty} \nabla e^{-tL^\ast} G(t,\,.\,) \,dt.
\end{align*}

By application of Lemma \ref{kernel-est-L} (ii) for $\nabla e^{-tL^\ast}$, one can show (e.g., by adapting the proof of \cite[Theorem 6]{CMS}, using $L^1$-$L^2$ off-diagonal estimates instead of kernel estimates)  that this operator maps $T^{1,2}(\R^{n+1}_+)$ to $ H^1(\R^n; \C^n)$.
 Thus, $G \in T^{1,2}(\R^{n+1}_+)$ implies $h \in H^1_D(\R^n;\C^{n+1})$ where this space is a closed subspace of $H^1$ defined, for example in \cite{AS}.
As $B^\ast$ has real coefficients, \cite[Corollary 13.3]{AS} shows that $H^1_{DB^\ast}(\R^n;\C^{n+1})=H^1_D(\R^n;\C^{n+1})$ (the former space also being defined in \cite{AS}). Therefore, \cite[Theorem 9.1]{AS} is applicable. Combined with \cite[Remark 9.8]{AS}, it yields for every $h \in H^1_D(\R^n;\C^{n+1})$,
\begin{align*}
	\|\tilde{N}(e^{-sDB^\ast DB^\ast} h)\|_1 \lesssim \|h\|_{H^1}. 
\end{align*}
where $\tilde{N}$ is the variant of the non-tangential maximal function $N$ used in \cite{AS} and 
$\tilde{ N} F$ and $NF$  have (by a purely geometrical argument) equivalent $L^1$ norms.  This gives the assertion. 
\end{proof}

Finally, to handle $\calA_3$, we show

\begin{Lemma} \label{lemma:errorterm-new} The sublinear operator $\widetilde \calR: F\mapsto \widetilde F$ with
$$\widetilde F(t, \, .\, )= \int_{4t}^{\infty} |e^{-(t+s)L} s^{-1/2} \div F(s,\,.\,)| \,ds$$
is bounded from $T^{\infty,2}(\R^{n+1}_+;\C^n)$ to $T^{\infty,2}(\R^{n+1}_+)$.
 \end{Lemma}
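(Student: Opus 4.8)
The plan is to adapt the proof of Lemma~\ref{Lemma-errorterm}, replacing the polynomial off-diagonal bounds used there for the Laplacian by the Gaussian ones available here. Write $(\widetilde{\calR}F)(t,\ldot)=\int_{4t}^{\infty}|K(t,s)F(s,\ldot)|\,ds$ with $K(t,s):=e^{-(t+s)L}s^{-1/2}\div$. The first step is to record the two bounds on $K(t,s)$ that will be used. Since $\div\,(L^{\ast})^{-1/2}$ is bounded on $L^{2}$ (the Kato estimate for $L^{\ast}$, cf.\ \cite{AHLMcT}) and $L^{1/2}e^{-uL}$ has $L^{2}$-norm $\lesssim u^{-1/2}$ by analyticity, one has $\|e^{-uL}\div\|_{L^{2}\to L^{2}}\lesssim u^{-1/2}$, hence $\|K(t,s)\|_{L^{2}(\R^{n})\to L^{2}(\R^{n})}\lesssim s^{-1/2}(t+s)^{-1/2}$. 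Moreover $u^{1/2}\nabla e^{-uL^{\ast}}$ satisfies $L^{2}$ off-diagonal (Davies--Gaffney) estimates (see \cite{ATbook}), so by duality so does $u^{1/2}e^{-uL}\div$; composing the resulting $L^{2}$-$L^{2}$ off-diagonal bound for $e^{-\frac{u}{2}L}\div$ with the $L^{2}$-$L^{\infty}$ off-diagonal bound for $e^{-\frac{u}{2}L}$ coming from the Gaussian kernel estimate of Lemma~\ref{kernel-est-L}(i) yields the analogue of \eqref{L2-Linfty-est-M-}:
\[
\|\Eins_{E}K(t,s)\Eins_{\tilde E}\|_{L^{2}\to L^{\infty}}\lesssim s^{-1/2}(t+s)^{-1/2}(t+s)^{-n/4}\exp\!\big(-c\,\dist(E,\tilde E)^{2}/(t+s)\big)
\]
for all Borel sets $E,\tilde E\subseteq\R^{n}$ and all $s,t>0$. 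As in Lemma~\ref{Lemma-errorterm}, these manipulations also give meaning to $K(t,s)F(s,\ldot)$ for $F\in T^{\infty,2}(\R^{n+1}_{+})$: one decomposes $F(s,\ldot)$ into spatial annuli about a fixed centre and sums, the off-diagonal decay ensuring convergence locally in $L^{2}$.

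Next I would prove that $\widetilde{\calR}$ is bounded on $L^{2}(\R^{n+1}_{+})$. By Minkowski's inequality, $\|(\widetilde{\calR}F)(t,\ldot)\|_{2}\le\int_{4t}^{\infty}\|K(t,s)F(s,\ldot)\|_{2}\,ds\lesssim\int_{4t}^{\infty}s^{-1}\|F(s,\ldot)\|_{2}\,ds$, using that $t+s\sim s$ on $\{s>4t\}$; Schur's lemma applied with the weight $p(t)=t^{\beta}$, $\beta\in(-\tfrac12,0)$, then yields the claim exactly as in Lemma~\ref{Lemma-errorterm}, and in particular shows the integral defining $\widetilde{\calR}F(t,\ldot)$ is finite for a.e.\ $t$.

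For the $T^{\infty,2}$ estimate, fix $(r,x_{0})\in\R^{n+1}_{+}$, put $B_{j}:=(0,2^{j}r)\times B(x_{0},\sqrt{2^{j}r})$ and $C_{j}:=B_{j}\setminus B_{j-1}$, and decompose $F=\sum_{j\ge0}F_{j}$ with $F_{0}=\Eins_{B_{0}}F$ and $F_{j}=\Eins_{C_{j}}F$. Since $\widetilde{\calR}$ is subadditive and the $T^{\infty,2}$-norm is monotone on nonnegative functions, Minkowski's inequality reduces matters to bounding, for each such $(r,x_{0})$, the sum $\sum_{j}I_{j}$ with $I_{j}:=\big(r^{-n/2}\int_{0}^{r}\|(\widetilde{\calR}F_{j})(t,\ldot)\|_{L^{2}(B(x_{0},\sqrt r))}^{2}\,dt\big)^{1/2}$. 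For $j\le2$, $I_{j}\lesssim\|F\|_{T^{\infty,2}}$ by the $L^{2}(\R^{n+1}_{+})$ bound just proved. For $j\ge3$, split $C_{j}=C_{j}^{(0)}\cup C_{j}^{(1)}$ with $C_{j}^{(0)}=(0,2^{j-1}r)\times(B(x_{0},\sqrt{2^{j}r})\setminus B(x_{0},\sqrt{2^{j-1}r}))$ (far in space) and $C_{j}^{(1)}=(2^{j-1}r,2^{j}r)\times B(x_{0},\sqrt{2^{j}r})$ (far in time), and set $F_{j}^{(i)}=\Eins_{C_{j}^{(i)}}F$. In both cases one writes $\|K(t,s)F_{j}^{(i)}(s,\ldot)\|_{L^{2}(B(x_{0},\sqrt r))}\le|B(x_{0},\sqrt r)|^{1/2}\|K(t,s)F_{j}^{(i)}(s,\ldot)\|_{L^{\infty}(B(x_{0},\sqrt r))}$ and uses the $L^{2}$-$L^{\infty}$ bound above together with $s\sim t+s\sim2^{j}r$ (valid for $t<r$ and $s$ in the relevant ranges). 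For $F_{j}^{(1)}$ the on-diagonal bound gives $\|K(t,s)F_{j}^{(1)}(s,\ldot)\|_{L^{2}(B(x_{0},\sqrt r))}\lesssim r^{n/4}(2^{j}r)^{-1-n/4}\|F_{j}(s,\ldot)\|_{2}$; integrating in $s$ (Cauchy--Schwarz) and then in $t$ and using $\int_{0}^{2^{j}r}\|F_{j}(s,\ldot)\|_{2}^{2}\,ds\le(2^{j}r)^{n/2}\|F\|_{T^{\infty,2}}^{2}$ gives $I_{j}^{(1)}\lesssim2^{-j/2}\|F\|_{T^{\infty,2}}$. For $F_{j}^{(0)}$ the spatial distance from $B(x_{0},\sqrt r)$ is $\gtrsim\sqrt{2^{j}r}$, so the off-diagonal bound gives, after dropping the harmless lower cut-off $4t$,
\[
\|(\widetilde{\calR}F_{j}^{(0)})(t,\ldot)\|_{L^{2}(B(x_{0},\sqrt r))}\lesssim r^{n/4}\int_{0}^{2^{j-1}r}s^{-1-n/4}\exp\!\big(-c\,2^{j}r/s\big)\|F_{j}(s,\ldot)\|_{2}\,ds,
\]
independently of $t\in(0,r)$; Cauchy--Schwarz in $s$ together with the change of variables $v=2^{j}r/s$, which turns $\int_{0}^{2^{j}r}s^{-2-n/2}\exp(-2c\,2^{j}r/s)\,ds$ into $(2^{j}r)^{-1-n/2}\int_{1}^{\infty}v^{n/2}e^{-2cv}\,dv\lesssim(2^{j}r)^{-1-n/2}$, then yields $I_{j}^{(0)}\lesssim2^{-j/2}\|F\|_{T^{\infty,2}}$. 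Summing over $j$ gives the lemma.

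The only real work is the derivation of the Gaussian $L^{2}$-$L^{\infty}$ off-diagonal bound for $e^{-uL}\div$: since $\div$ has no pointwise kernel, one cannot argue as for the Oseen kernel and must instead go through the Kato estimate and the Davies--Gaffney bound for $\nabla e^{-uL^{\ast}}$ and compose off-diagonal estimates. The sublinearity of $\widetilde{\calR}$ is harmless because every step uses only the triangle and Minkowski inequalities and the monotonicity of the $T^{\infty,2}$-norm on nonnegative functions, and the cut-off at $4t$ rather than $t$ merely guarantees $t+s\sim s$ throughout.
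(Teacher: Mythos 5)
Your proposal is correct and follows essentially the same route as the paper: an $L^{2}(\R^{n+1}_{+})$ bound via the operator-norm estimate $\|K(t,s)\|_{L^{2}\to L^{2}}\lesssim s^{-1/2}(t+s)^{-1/2}$ and Schur's test, followed by $L^{2}$-$L^{\infty}$ off-diagonal estimates for $K(t,s)$ (which the paper extracts from Lemma \ref{kernel-est-L} and you obtain, equivalently, by duality/composition of Davies--Gaffney bounds with the Gaussian kernel bound) and the same annular decomposition as in the proof of Lemma \ref{Lemma-errorterm}. The only cosmetic differences are that you invoke the Kato square root estimate where the simpler duality/analyticity bound $\|\nabla e^{-uL^{*}}\|_{L^{2}\to L^{2}}\lesssim u^{-1/2}$ suffices, and that you exploit the Gaussian decay directly in the $s$-integral instead of the dyadic-in-$s$ splitting used in Lemma \ref{Lemma-errorterm}; both variants are fine.
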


\begin{proof}
Write
\begin{align*}
	\widetilde F(t, \, .\, )=\int_{4t}^\infty |K(t,s)F(s,\,.\,)|\,ds,
\end{align*}
with $K(t,s):=e^{-(t+s)L} s^{-1/2} \div$ for $s,t>0$. As a consequence of uniform boundedness of $(e^{-tL}t^{1/2}\div)_{t>0}$ in $L^2$, one has 
$$\|K(t,s)\|_{L^2\to L^2} = s^{-1/2}(t+s)^{-1/2} \|e^{-(t+s)L}(t+s)^{1/2}\div\|_{L^2\to L^2}\lesssim s^{-1/2}(t+s)^{-1/2}.$$
This allows to apply Schur's lemma as for Lemma \ref{Lemma-errorterm},
 and implies boundedness of $\widetilde \calR$ from $L^2(\R^{n+1}_+;\C^n)$ to $L^2(\R^{n+1}_+)$. 
 
 For the extension to $T^{\infty,2}$, observe that Lemma \ref{kernel-est-L} yields $L^2$-$L^\infty$ off-diagonal estimates of the form \eqref{L2-Linfty-est-M-}. The second part of the proof of Lemma \ref{Lemma-errorterm} directly carries over to the present situation, and yields boundedness of $\widetilde\calR$ on $T^{\infty,2}$. 
\end{proof}

\begin{proof} [Proof of \eqref{linear-est2-new}] Recall the splitting $\calA= \calA_1+ \calA_2 + \calA_3$ at the beginning of the section. For 
 $\calA_1$, we apply Lemma \ref{lemma:revH-space} with $q=p>n/2$ and  $\tilde q= r=2p$, in which case $q\le \tilde q\le q^\ast $.
Combining this with Lemma \ref{lemma:maxreg-new} yields
\begin{align*}
	\|\calA_1(\alpha)\|_{T^{\infty,2,2p}}
	= \|\calM^+\calT(s^{1/2}\alpha(s,\,.\,))\|_{T^{\infty,2,2p}}
	\lesssim \|\calT(s^{1/2}\alpha(s,\,.\,))\|_{T^{\infty,2,2p}}
	\lesssim \|s^{1/2}\alpha(s,\,.\,)\|_{T^{\infty,2,p,2p}}.
\end{align*}

Concerning $\calA_2$,
 Lemma \ref{lemma:A2-new} above establishes boundedness of the dual operator $\calA_2^\ast$, from $T^{1,2}(\R^{n+1}_+)$ to $T^{1,\infty,2}(\R^{n+1}_+;\C^n)$. By duality and Proposition \ref{Carleson-HR}, respectively, we therefore obtain boundedness of the operator $\calA_2$ from $T^{\infty,1,2}(\R^{n+1}_+;\C^n)$ to $T^{\infty,2}(\R^{n+1}_+)$.
In order to obtain boundedness into $T^{\infty,2,2p}(\R^{n+1}_+)$, we apply Lemma \ref{lemma:revH} with $F(s)=\calA_2(\alpha)(s,\,.\,)$ and $q=r=2p$. Observe that, $\calA_2(\alpha)(t,\,.\,)= e^{-(t-\tau)L}\calA_2(\alpha)(\tau,\,.\,)$ for each $\tau<t$, hence one finds 
 with the notation of Lemma \ref{lemma:revH}, $G(t)=\calA_2(\alpha)(t,\,.\,)$, therefore we obtain the bootstrap estimate
\begin{align*}
	\|\calA_2(\alpha)\|_{T^{\infty,2,2p}}
	 \lesssim \|\calA_2(\alpha)\|_{T^{\infty,2}}
	\lesssim  \|\alpha\|_{T^{\infty,1,2}}.
\end{align*}

Finally we consider $\calA_3$. 
We  apply a slight variant of Lemma \ref{lemma:revH}. Fix $t>0$. For $s\in [t/4, t/2]$,
$$
\calA_3(\alpha)(t,\,.\,) = e^{-(t-s)L} F_{t}(s, \, .\,)
$$ with 
$F_{t}(s,\, .\,)= \int_{t}^\infty e^{-(s+\sigma)L} \div \alpha(\sigma, \, .\, )\, d\sigma$. 
Hence, 
$ \calA_3(\alpha)(t,\,.\,)=\barint_{\frac{t}{4}}^{\frac{t}{2}} e^{-(t-s)L}F_{t}(s, \, .\, )\,ds$. Applying the beginning of the argument for Lemma \ref{lemma:revH} and observing that 
$|F_{t}(s,\, .\,)| \le \widetilde F(s,\,.\,):= \int_{4s}^\infty |e^{-(s+\sigma)L} \div \alpha(\sigma, \, .\, )|\, d\sigma$, we obtain $\|\calA_3(\alpha)\|_{T^{\infty,2,2p}}
	\lesssim \| \widetilde F\|_{T^{\infty,2}}$.  
Using Lemma \ref{lemma:errorterm-new}, and  H\"older's inequality in the last step, we have
\begin{align*}
	 \|\widetilde F\|_{T^{\infty,2}}
	= \|\widetilde \calR(s^{1/2}\alpha(s,\,.\,))\|_{T^{\infty,2}}
	 \lesssim  \|s^{1/2}\alpha(s,\,.\,)\|_{T^{\infty,2}}
	\lesssim \|s^{1/2}\alpha(s,\,.\,)\|_{T^{\infty,2,p,2p}}.
\end{align*} 
as $p\ge 2$. 
This finishes the proof. 
\end{proof}

\begin{Remark}
One wonders why we analyse $\calA_2$ and $\calA_3$  separately, while in \cite{KochTataru}, this is not needed. In Section \ref{sec:comments}, we already observed that at the level of tent spaces we used the $T^{\infty, 2}_{1/2}$  condition on $\alpha$ and not the pointwise bounds on $\alpha$, while  the latter and not the former is used in  the proof of the energy estimate (15) of \cite{KochTataru}. This proof requires an integration by parts to absorb some non absolutely convergent integrals.

Supposing we want to analyse  $\calA_2+\calA_3$ as one operator, we would have to prove  a  $T^{\infty,2,2p}$  control for this sum. Or similarly, taking Lemma \ref{lemma:revH} into account, a bound in $T^{\infty,2}$, ie a Carleson measure estimate. This means that locally, we would be looking at expressions such as
$$\int_0^\tau\int_{B(x,\sqrt{\tau})} |\int_0^t e^{-(t+s)L} \div \alpha(s,\,.\,)(y)\,ds|^2\,dydt,$$  and we would have to bound against some form of local $L^1$ estimates for $\alpha$ (and, possibly, the local averaged  $N_p$ quantities). Compared to \cite{KochTataru}, we  face the following problems here. First, $\div$ does not commute anymore with the semigroup. As explained after Lemma \ref{lemma:A2-new}, we can still use a commutation property, but only by making use of the framework of first order Hodge-Dirac operators. But second, it is not clear how to compute the square and beat the lack of absolute convergence of the integral inside as in \cite{KochTataru}, as we impose no self-adjointness on $L$. The other option is to argue by duality with non-tangential maximal functions. But even in this specific situation, we do not see how to handle the terms. Thus $\calA_2$ contains the "singular terms" which are handled by the Hardy space technique (to absorb the non absolutely converging terms) while $\calA_3$ is a remainder term with no singularity and no use of Hardy spaces, thus acting on a different tent space. It is not clear to us how
to use the condition on $N_p(s\alpha)$ from the solution space in such estimates.
\end{Remark}

\subsection{Proof of Proposition \ref{prop:limit}}
\label{sec:pf-limit}

We define $I(t)$ as a Schwartz distribution by
$$
	\skp{I(t),\varphi} = - \int_0^t \int_{\R^n} F(s,x) \cdot \overline{ \nabla e^{-(t-s)L^\ast} \varphi (x) }\,dxds
$$
for every $\varphi \in \calS(\R^n)$ by proving that $$
	J(t): = \int_0^t \int_{\R^n} |F(s,x)| |\nabla e^{-(t-s)L^\ast} \varphi (x)|\,dxds 
	$$
	is controlled by continuous semi-norms on $\varphi$. 
Moreover, we prove this in such  a way to obtain $\lim_{t \to 0} J(t)=0$. 
 
 First recall that every $\varphi \in \calS(\R^n)$ can be written as $\varphi=\sum_{k \in \Z^n} \lambda_k \varphi_k$, where  $\lambda_k \in \C$, $(|\lambda_k|)_k$ is a rapidly decaying sequence, and the functions $\varphi_k \in \calD(\R^n)$ are supported in balls $B_k$ of radius $1$. It therefore suffices to obtain a uniform bound with respect to the size of $B_{k}$ on $|\skp{I(t),\varphi_k}|$ in terms of appropriate semi-norms on $\varphi_k$ and the limit as $t\to 0$.

Assume from now on $\varphi \in \calD(\R^n)$ with support in a ball $B=B(x_0,1)$ of radius $1$. Assume $t \in (0,\frac{1}{2})$, otherwise the estimate is simpler and can be done in a similar way (but we may get a polynomial growth in $t$ as $t$ increases).  
We show that uniformly for $t \in (0,\frac{1}{2})$ and for all $\varphi$ as above,
$
	J(t)
	\leq C (\|\nabla \varphi\|_2 +\|\nabla \varphi\|_\infty+\|\varphi\|_{2})
$
and $\lim_{t \to 0} J(t)=0$. 

To do this, we split the double integral defining $J(t)$ into four parts as follows. \\

Case 1: $s \leq \frac{t}{4}$ and $x \in 2B$. 
Write 
\begin{align} \label{phi-dec}
\nabla e^{-(t-s)L^\ast}\varphi
= &\nabla e^{-(t-s)L^\ast}\varphi - \nabla e^{-tL^\ast}\varphi\\ \nonumber 
   + &\nabla e^{-tL^\ast}\varphi - \nabla \varphi\\ \nonumber
   + &\nabla \varphi.
\end{align}
For the last term in \eqref{phi-dec}, we have 
\begin{align*}
	\int_0^{t/4} \int_{2B} |F(s,x)| |\nabla \varphi(x)| \,dxds
	\leq \|\nabla \varphi\|_\infty \int_0^2 \int_{2B} |F(s,x)|\,dxds
	\leq \|\nabla \varphi\|_\infty |2B| \|F\|_{T^{\infty,1}}.
\end{align*}
From the embedding $T^{\infty,1,2} \hookrightarrow T^{\infty,1}$
and  dominated convergence,
$$
	\lim_{t \to 0} \int_0^{t/4} \int_{2B} |F(s,x)| |\nabla \varphi(x)| \,dxds =0.
$$
For the second term in \eqref{phi-dec}, abbreviate  
$h(s,x) = \Eins_{(0,t/4)}(s)\Eins_{2B}(x) (\nabla e^{-tL^\ast} \varphi - \nabla \varphi)(x)$. 
Then by Proposition \ref{Carleson-HR}
\begin{align*}
	\int_0^{t/4} \int_{\R^n} |F(s,x)| |(\nabla e^{-tL^\ast} \varphi -\nabla \varphi)(x)| \,dxds
	\leq \iint_{\R^{n+1}_+} |F(s,x)||h(s,x)|\,dxds
	\lesssim \|F\|_{T^{\infty,1,2}} \|h\|_{T^{1,\infty,2}}.
\end{align*} 
Now note that by definition of $h$, one has
$\supp N(W_2 h) \subseteq 3B$, and
$N(W_2 h)(x) \leq \calM_2(\nabla e^{-tL^\ast}\varphi - \nabla\varphi)(x)$  (see \eqref{eq:defM} for the definition of $ \calM_2$). 
Using this in the first step and Kolmogorov's lemma (see \cite[Lemma 5.16]{Du}) in the second step, we get
\begin{align*}
	\|h\|_{T^{1,\infty,2}}
	\leq \int_{3B} \calM_2(\nabla e^{-tL^\ast}\varphi -\nabla \varphi)(x) \,dx
	\lesssim |3B|^{1/2} 
	\|\nabla e^{-tL^\ast} \varphi - \nabla \varphi\|_2.
\end{align*}
For the last expression, the solution of the Kato square root problem gives
\begin{align*}
	\|\nabla e^{-tL^\ast}\varphi - \nabla \varphi\|_2
	\simeq \|(L^{\ast})^{1/2}(e^{-tL^\ast} -I)\varphi\|_2
	= \|(e^{-tL^\ast}  -I)(L^{\ast})^{1/2}\varphi\|_2
	\leq 2 \|(L^{\ast})^{1/2} \varphi\|_2 \simeq \|\nabla \varphi\|_2,
\end{align*}
and this estimate holds uniformly with respect to $t$. 
Moreover 
$$
	\|(e^{-tL^\ast}-I)(L^{\ast})^{1/2}\varphi\|_2 \to 0, \qquad t \to 0^+.
$$
Thus, we obtain
\begin{align*}
	\int_0^{t/4} \int_B |F(s,x)||\nabla e^{-tL^\ast} \varphi - \nabla \varphi(x)|\,dxds
	& \lesssim \|F\|_{T^{\infty,1,2}} |3B|^{1/2} \|\nabla e^{-tL^\ast}\varphi - \nabla \varphi\|_2,\\
	& \lesssim \|F\|_{T^{\infty,1,2}} |3B|^{1/2}  \|\nabla \varphi\|_2,
\end{align*}
where the estimate is uniformly with respect to $t$. This implies that 
\begin{align*}
	\lim_{t \to 0} \int_0^{t/4} \int_B |F(s,x)||\nabla e^{-tL^\ast} \varphi - \nabla \varphi(x)|\,dxds =0.
\end{align*}
For the first part in \eqref{phi-dec}, abbreviate 
$\tilde{h}(s,x) = \Eins_{(0,t/4)}(s)\Eins_{2B}(x) (\nabla e^{-(t-s)L^\ast} \varphi - \nabla e^{-tL^\ast} \varphi)(x)$. 
Similarly as above, we have by Proposition \ref{Carleson-HR},
\begin{align*}
	\int_0^{t/4} \int_{2B} |F(s,x)||\nabla e^{-(t-s)L^\ast} \varphi (x) - \nabla e^{-tL^\ast} \varphi(x)|\,dsdx
	&\lesssim \iint_{\R^{n+1}_+} |F(s,x)||\tilde{h}(s,x)|\,dxds\\
	& \lesssim \|F\|_{T^{\infty,1,2}} \|\tilde{h}\|_{T^{1,\infty,2}},
\end{align*}
and $\supp N(W_2 \tilde{h}) \subseteq 3B$. 
Now a geometric argument shows that there exists a cone $\tilde \Gamma(x)$ with aperture independent of $x$, such that
$$
 (\tau,z) \in \Gamma(x) \Rightarrow W(\tau,z) \subset \tilde\Gamma(x),
$$
and therefore
\begin{align*}
	N(W_2\tilde{h})(x)
	\lesssim \left(\iint_{\substack{(y,s) \in \tilde{\Gamma}(x)\\ s \leq t/4}} |\nabla e^{-(t-s)L^\ast} \varphi(y) - \nabla e^{-tL^\ast}\varphi(y)|^2 \,\frac{dyds}{s^{n/2+1}}\right)^{1/2}.
\end{align*}
Integrating this over $3B$ and using Fubini yields
\begin{align} \label{eq:est-tilde-h}
	\int_{3B} N(W_2\tilde{h})(x) \,dx
	\lesssim |3B|^{1/2} \left(\int_0^{t/4} \int_{\R^n} |\nabla e^{-(t-s)L^\ast} \varphi(y) - \nabla e^{-tL^\ast} \varphi(y)|^2\frac{dyds}{s}\right)^{1/2}. 
\end{align}
We can now estimate the inner integral by
\begin{align*}
	\|\nabla e^{-(t-s)L^\ast}\varphi - \nabla e^{-tL^\ast}\varphi\|_2
	\lesssim \|e^{-(t-2s)L^\ast} e^{-sL^\ast}(I-e^{-sL^\ast})(L^{\ast})^{1/2}\varphi\|_2
\leq \|Q_{s}(L^{\ast})^{1/2}\varphi\|_2. 
\end{align*}
where $Q_s=e^{-sL^\ast}(I-e^{-sL^\ast})$ and using that the semigroup contracts on $L^2(\R^n)$. We have therefore obtained
\begin{align*}
	&\int_0^{t/4} \int_{2B} |F(s,x)||\nabla e^{-(t-s)L^\ast} \varphi(x)-\nabla e^{-tL^\ast}\varphi(x)|\,dxds\\
	& \qquad \lesssim \|F\|_{T^{\infty,1,2}} |3B|^{1/2} \left(\int_0^{t/4} \|Q_s(L^{\ast})^{1/2}\varphi\|_2^2\,\frac{ds}{s}\right)^{1/2}.
\end{align*}
The square function on the right hand side is uniformly bounded with respect to $t$ by $\|(L^{\ast})^{1/2}\varphi\|_2 \simeq \|\nabla \varphi\|_2$, and tends to $0$ for $t \to 0$. 
This yields that the left hand side tends to $0$ for $t \to 0$. 
\\

Case 2: $\frac{t}{4} <s\leq t$ and $x \in 2B$. 
This time, we split
$$
	\nabla e^{-(t-s)L^\ast}\varphi = \nabla e^{-(t-s)L^\ast}\varphi - \nabla \varphi + \nabla \varphi. 
$$
For the second term, we can directly estimate
\begin{align*}
	\int_{t/4}^t \int_{2B} |F(s,x)||\nabla \varphi(x)|\,dxds
	\lesssim \|\nabla \varphi\|_\infty \|F\|_{T^{\infty,1,2}} |2B|,
\end{align*}
and as above, the left hand side tends to $0$ for $t \to 0$ by dominated convergence. 
Then for the first term, we abbreviate $h(s,x):=\Eins_{(t/4,t)}(s)\Eins_{2B}(x) (\nabla e^{-(t-s)L^\ast} \varphi- \nabla \varphi)(x)$,
and Proposition \ref{Carleson-HR} yields
\begin{align*}
	\int_{t/4}^t \int_{2B} |F(s,x)||\nabla e^{-(t-s)L^\ast} \varphi (x) - \nabla \varphi(x)|\,dsdx
	\lesssim \|F\|_{T^{\infty,1,2}} \|h\|_{T^{1,\infty,2}}.
\end{align*}
For the estimate on $h$, we use the same arguments as in Case 1 for $\tilde{h}$. Instead of \eqref{eq:est-tilde-h}, we obtain
\begin{align} \label{eq:Case2-2}
	\int_{3B} N(W_2h)(x) \,dx
	\lesssim |3B|^{1/2} \left(\int_{t/4}^t \int_{\R^n} |\nabla e^{-(t-s)L^\ast} \varphi(y) - \nabla \varphi(y)|^2\frac{dyds}{s}\right)^{1/2}. 
\end{align}
The right hand side can be estimated by $|3B|^{1/2}$ times
\begin{align*}
	\sup_{0<u \leq t} \|\nabla e^{-uL^\ast} \varphi - \nabla \varphi\|_2,
\end{align*}
which converges to 0 as $t\to 0$.
\\

Case 3: Assume $s \leq \frac{t}{4}$ and $x \notin 2B$. 
In this case, write  
\begin{align}	\label{phi-dec2}
	\nabla e^{-(t-s)L^\ast} \varphi
	&= \nabla e^{-(t-s)L^\ast} \varphi - \nabla e^{-tL^\ast}\varphi + \nabla e^{-tL^\ast}\varphi\\ \nonumber
	&= \int_{t-s}^t \nabla L^\ast e^{-uL^\ast} \varphi \,du 
	+ \nabla e^{-tL^\ast}\varphi.
\end{align}
We split $(2B)^c$ into annuli $S_j(B)$   as defined before Definition \ref{def:tent}.  For $j \geq 2$, 
\begin{align*}
	\int_0^{t/4} \int_{S_j(B)} |F(s,x)||\nabla e^{-(t-s)L^\ast} \varphi(x) - \nabla e^{-tL^\ast}\varphi(x)|\,dxds
	 \lesssim \|F\|_{T^{\infty,1,2}} \|h_j\|_{T^{1,\infty,2}},
\end{align*}
where $h_j(s,x)=\Eins_{(0,t/4)}(s)\Eins_{S_j(B)}(x)(\nabla e^{-(t-s)L^\ast} \varphi - \nabla e^{-tL^\ast}\varphi)(x)$.
For the estimate on $h_j$, note that $\supp N(W_2 h_j) \subseteq \tilde{S}_j(B)$, for a slightly larger annulus $\tilde{S}_j(B)$.
With similar arguments as in Case 1 and using \eqref{phi-dec2}, one has
\begin{align*}	
\int_{\tilde{S}_j(B)} N(W_2 h_j)(x)\,dx 
&\lesssim |2^jB|^{1/2} \left(\int_{\tilde{S}_j(B)} \iint_{\substack{(y,s)\in\tilde{\Gamma}(x)\\ s \leq t/4}}  |\int_{t-s}^t \nabla L^\ast e^{-uL^\ast} \varphi(y) \,du|^2\,\frac{dyds}{s^{n/2+1}}dx\right)^{1/2} \\
	&\lesssim  |2^jB|^{1/2} \left(\int_0^{t/4}\int_{\tilde{\tilde{S}}_j(B)}  |\int_{t-s}^t \nabla L^\ast e^{-uL^\ast} \varphi(y) \,du|^2\,\frac{dyds}{s}\right)^{1/2},
\end{align*}
where $\tilde{\tilde{S}}_j(B)$ denotes another slightly larger annulus.
Then $L^2$ off-diagonal estimates for $u^{1/2}\nabla uL^\ast e^{-uL^\ast}$ yield for $N \in \N$
\begin{align*}	
&\left(\int_0^{t/4}\int_{\tilde{\tilde{S}}_j(B)}  |\int_{t-s}^t \nabla L^\ast e^{-uL^\ast} \varphi(y) \,du|^2\,\frac{dyds}{s}\right)^{1/2}\\ \nonumber
	&\qquad \lesssim \left( \int_0^{t/4} \left(\int_{t-s}^t u^{-1/2} u^{-1} \left(1+\frac{(2^j)^2}{u}\right)^{-N}\,du\right)^2 \|\varphi\|_2^2\,\frac{ds}{s} \right)^{1/2}\\ \nonumber
	&\qquad \lesssim t^{-1/2} \left(\frac{t}{4^j}\right)^N \left(\int_0^{t/4} \frac{s^2}{t^2} \,\frac{ds}{s}\right)^{1/2}  \|\varphi\|_2 
	\lesssim t^{-1/2} \left(\frac{t}{4^j}\right)^N \|\varphi\|_2.
\end{align*}
Therefore, 
\begin{align*}
	\int_0^{t/4} \int_{S_j(B)} |F(s,x)||\nabla e^{-(t-s)L^\ast} \varphi(x) - \nabla e^{-tL^\ast}\varphi(x)|\,dxds
	 \lesssim \|F\|_{T^{\infty,1,2}} |2^jB|^{1/2} t^{-1/2} \left(\frac{t}{4^j}\right)^N \|\varphi\|_2,
\end{align*}
and by choosing $N>\frac{n}{4}$, the last expression is summable over $j$, and the sum tends to $0$ for $t \to 0$ when $N>\frac{1}{2}$. \\
We turn to the second term in \eqref{phi-dec2}. We again split $(2B)^c$ into annuli $S_j(B)$. For $j \geq 2$,
\begin{align} \label{case3-2}
	\int_0^{t/4} \int_{S_j(B)} |F(s,x)||\nabla e^{-tL^\ast}\varphi(x)|\,dxds
	 \lesssim \|F\|_{T^{\infty,1,2}} \|\tilde{h}_j\|_{T^{1,\infty,2}},
\end{align}
with $\tilde{h}_j(s,x)=\Eins_{(0,t/4)}(s)\Eins_{S_j(B)}(x)(\nabla e^{-tL^\ast}\varphi)(x)$. The support property of $\tilde{h}_j$ implies $\supp N(W_2 \tilde{h}_j) \subseteq \tilde{S}_j(B)$, moreover note that $N(W_2 \tilde{h}_j)(x) \leq \calM_2(\Eins_{S_j(B)}\nabla e^{-tL^\ast}\varphi)(x)$. Thus, Kolmogorov's lemma in the second step and  $L^2$ off-diagonal estimates for $t^{1/2}\nabla e^{-tL^\ast}$ in the third step yield
\begin{align*}
	\|\tilde{h}_j\|_{T^{1,\infty,2}} 
	\leq \int_{\tilde{S}_j(B)} \calM_2(\Eins_{S_j(B)}\nabla e^{-tL^\ast}\varphi)(x)\,dx
	&\lesssim |2^jB|^{1/2} \|\Eins_{S_j(B)}\nabla e^{-tL^\ast}\varphi\|_2\\
	& \lesssim |2^jB|^{1/2} t^{-1/2} \left(\frac{t}{4^j}\right)^N \|\varphi\|_2.
\end{align*}
We choose $N>\frac{n}{4}$, so that the last expression is summable in $j$, and the sum tends to $0$ for $t \to 0$ when $N>\frac{1}{2}$. Plugging this estimate back into \eqref{case3-2} gives the desired estimate.  \\

Case 4: $\frac{t}{4}\leq s \leq t$ and $x \notin 2B$. 
Split $(2B)^c$ into annuli $S_j(B)$, and consider $j \geq 2$.
Here, Proposition \ref{Carleson-HR} yields
\begin{align*}
	\int_{t/4}^t \int_{S_j(B)} |F(s,x)||\nabla e^{-(t-s)L^\ast} \varphi (x)|\,dsdx
	\lesssim \|F\|_{T^{\infty,1,2}} \|\tilde{g}_j\|_{T^{1,\infty,2}}.
\end{align*}
with $\tilde{g}_j(s,x):=\Eins_{(t/4,t)}(s)\Eins_{S_j(B)}(x)\nabla e^{-(t-s)L^\ast} \varphi (x).$ 
We get from $L^2$ off-diagonal estimates for $(t-s)^{1/2}\nabla e^{-(t-s)L^\ast}$,
\begin{align*}
	W_2 \tilde{g}_j(\sigma,x) 
	&=\left(\sigma^{-n/2+1} \iint_{W(\sigma,x)}|\Eins_{(t/4,t)}(s)\Eins_{S_j(B)}(y)\nabla e^{-(t-s)L^\ast}\varphi(y)|^2\,dyds\right)^{1/2}\\
	&\lesssim \left(t^{-n/2+1} \int_\sigma^{2\sigma} (t-s)^{-1} \left(1+\frac{(2^j)^2}{t-s}\right)^{-2N} \,d\sigma\right)^{1/2} \|\varphi\|_2\\
	&\lesssim t^{-n/4-1/2} \left(\frac{t}{4^j}\right)^{N}\|\varphi\|_2.
\end{align*}
Since $\supp N(W_2 \tilde{g}_j) \subseteq \tilde{S}_j(B)$, we therefore have
\begin{align*}
  \|\tilde{g}_j\|_{T^{1,\infty,2}} 
  \leq \int_{\tilde{S}_j(B)} N(W_2 \tilde{g}_j)(y)\,dy 
  \lesssim |2^jB|  t^{-n/4-1/2} \left(\frac{t}{4^j}\right)^{N}\|\varphi\|_2,
\end{align*}
which is summable in $j$ for $N>\frac{n}{2}$, and  the sum tends to $0$ for $t \to 0$ when $N>\frac{n}{4}+ \frac{1}{2}$. \\

We now prove  the continuity\footnote{The previous argument easily shows that $I(t)$ is measurable: it suffices to  bound $\int h(t)J(t)\, dt$ for locally integrable positive $h$ instead of just $J(t)$ for fixed $t$.} of $t\mapsto I(t)$ in $\calS'(\R^n)$. The argument is as above but a little tedious. Let $0<t<\tau<\frac{1}{2}$ and write $I(\tau)-I(t)$ as
\begin{align} \label{eq:cont-I}
	 \int_{t}^{\tau}e^{-(\tau-s)L}\div F(s,\,.\,)\,ds
	- \int_0^{t} e^{-(t-s)L}(I-e^{-(\tau-t)L})\div F(s,\,.\,)\,ds.
\end{align}
As above,   we bound the double integral against $\varphi$ supported in $B$. 
For the first term arising from \eqref{eq:cont-I}, the  calculations for $J(\tau)$ show that 
\begin{align*}
	\int_{t}^{\tau} \int_{\R^n} |F(s,x)| |\nabla e^{-(\tau-s)L^\ast} \varphi(x)|\,dxds 
	\leq C (\|\nabla \varphi\|_2 +\|\nabla \varphi\|_\infty+\|\varphi\|_{2}).
\end{align*}
By dominated convergence, we obtain that the left hand side tends to $0$ for $t \to \tau$. 
For $\tau \to t$, we  repeat the arguments of Case 2 and Case 4,  changing $t$ to $\tau$  and inserting  the indicator of  $ (t,\tau)$ in the second factor.   
In Case 2,  that is when we replace $\R^n$ by $2B$,  the estimate on $\nabla\varphi$ tends to $0$ for $\tau \to t$ by dominated convergence. For the estimate on $\nabla e^{-(\tau-s)L^\ast}\varphi - \nabla \varphi$, the right hand side of \eqref{eq:Case2-2} gets replaced by $|3B|^{1/2}$    times
\begin{align*}
\left(\int_{t}^{\tau} \int_{\R^n} |\nabla e^{-(\tau-s)L^\ast} \varphi(y) - \nabla \varphi(y)|^2\frac{dyds}{s}\right)^{1/2}
 \lesssim \left(\frac{\tau-t}{\tau}\right)^{1/2} 
\sup_{0<u \leq \tau-t} \|\nabla e^{-uL^\ast} \varphi - \nabla \varphi\|_2,
\end{align*}
which tends to $0$ for $\tau \to t$.\\
In Case 4,  that is when we replace $\R^n$ by annuli $S_{j}(B)$, we let the reader check that by using $L^2$ off-diagonal estimates for $(\tau-s)^{1/2}\nabla e^{-(\tau-s)L^\ast}$, we can replace the factor $\left(\frac{t}{4^j}\right)^N$ by $\left(\frac{\tau-t}{4^j}\right)^N$, which tends to $0$ for $\tau \to t$.\\

For the second term arising from \eqref{eq:cont-I}, consider
\begin{align*}
	\tilde{J}(t) = \int_0^t \int_{\R^n} |F(s,x)||\nabla e^{-(t-s)L^\ast}(I-e^{-(\tau-t)L^\ast})\varphi(x)|\,dxds.
\end{align*}
Now note that in the above estimate on $J(t)$, only in two places we have estimates against $\|\nabla\varphi\|_\infty$, which we can not use. In all other cases, we obtain bounds in terms of $\|\nabla \varphi\|_2$ or $\|\varphi\|_2$. For these estimates, we want to  replace $\varphi$ by $(I-e^{-(\tau-t)L^\ast})\varphi$ so that we need to localize again: Let $(B_k)$ be a covering of $\R^n$ with balls of radius $1$ with bounded overlap. Let $\chi_k$ be smooth cut-off functions with support in $B_k$, $\sum_k \chi_k=1$ and $\|\nabla \chi_k\|_\infty\leq 1$. From the estimates on $J(t)$, we obtain bounds on $\tilde{J}(t)$ in terms of $\|\chi_k(I-e^{-(\tau-t)L^\ast})\varphi\|_2$ and $\|\nabla (\chi_k(I-e^{-(\tau-t)L^\ast})\varphi)\|_2$. We now use that on the one hand, $L^2$ off-diagonal estimates for $e^{-(\tau-t)L^\ast}$ and $\nabla e^{-(\tau-t)L^\ast}$ imply for $\dist(B,B_k)>2$,
\begin{align*}
	\|\chi_k(I-e^{-(\tau-t)L^\ast})\varphi\|_2
	& = \|\chi_k e^{-(\tau-t)L^\ast} \varphi\|_2
	\lesssim \left(\frac{\tau-t}{\dist(B,B_k)^2}\right)^N \|\varphi\|_2\\
	\|\nabla (\chi_k(I-e^{-(\tau-t)L^\ast})\varphi)\|_2
	&\lesssim \|\nabla \chi_k\|_\infty \|\chi_k e^{-(\tau-t)L^\ast}\varphi\|_2 + \|\chi_k \nabla e^{-(\tau-t)L^\ast} \varphi\|_2\\
	&\lesssim \left(\frac{\tau-t}{\dist(B,B_k)^2}\right)^N \|\varphi\|_2,
\end{align*}
and the left hand side is summable in $k$ and the sum tends to $0$ for $\tau \to t$ as long as $N$ is large.
If $\dist(B,B_k)\leq 2$, then use that for $\tau \to t$,
\begin{align*}
	\|(I-e^{-(\tau-t)L^\ast})\varphi\|_2 \to 0,
\qquad \|\nabla ( \chi_k (I-e^{-(\tau-t)L^\ast})\varphi)\|_2 \to 0.
\end{align*}
It remains to study two terms, for the parts of $J(t)$ which were estimated against $\|\nabla \varphi\|_\infty$ (in Case 1 and Case 2 at the beginning). More precisely, we have to consider
\begin{align*}
	\tilde{J}_0(t)=\int_0^t \int_{\R^n} |F(s,x)||\nabla (I-e^{-(\tau-t)L^\ast})\varphi(x)|\,dxds.
\end{align*}
Here, we  follow the argument  of \eqref{case3-2}. Let
$g_j(s,x)=\Eins_{(0,t)}(s)\Eins_{S_j(B)}(x)\nabla (I-e^{-(\tau-t)L^\ast})\varphi(x)$ replace $\tilde{h}_j(s,x)$.
If $j\leq 1$, we use that  $\|\nabla (I-e^{-(\tau-t)L^\ast})\varphi\|_2 \lesssim \|\nabla \varphi\|_2$, and tends to $0$ for $\tau \to t$.
For $j\geq 2$, $L^2$ off-diagonal estimates give
\begin{align*}
	\|g_j\|_{T^{1,\infty,2}}
	&\lesssim |2^jB|^{1/2} \|\Eins_{S_j(B)}\nabla (I-e^{-(\tau-t)L^\ast})\varphi\|_2\\
	&\lesssim |2^jB|^{1/2} (\tau-t)^{-1/2} \left(\frac{\tau-t}{4^j}\right)^N \|\varphi\|_2,
\end{align*}
and this is summable in $j$ for $N>\frac{n}{4}$, and the sum tends to $0$ for $\tau \to t$ if $N>\frac{1}{2}$.

\subsection{Proof of Corollary \ref{lem:weak-sol}}
\label{sec:weak-sol}

The estimate \eqref{linear-est2-new} shows that $I \in T^{\infty,2,2p}$. Since  $T^{\infty,2,2p} \hookrightarrow T^{\infty,2}$, and every function in $T^{\infty,2}$ is locally square integrable, this shows the first statement. 
For the second statement, consider the two cases $s \in (0,t/2)$ and $s \in (t/2,t)$. For $s \in (0,t/2)$, write
\begin{align*}
	\int_0^{t/2} \nabla_x e^{-(t-s)L}\div F(s,\,.\,)\,ds
	=\nabla_x e^{-t/2L} \int_0^{t/2} e^{-(t/2-s)L}\div F(s,\,.\,)\,ds.
\end{align*}
Again by \eqref{linear-est2-new}, the integral on the right hand side is in $T^{\infty,2}$. Using $L^2$ off-diagonal estimates for $\nabla_x e^{-t/2L}$, one can then show that the left hand side is in  $L^2_{\loc}(\R^{n+1}_+)$.
For $s \in (t/2,t)$, we use that the bounded functional calculus for $L$ in $L^2$ implies that
\begin{align*}	
	G(t,\,.\,)  \mapsto \int_{t/2}^t \nabla_x e^{-(t-s)L}\div G(s,\,.\,)\,ds
\end{align*}
maps $T^{2,2}$ into $T^{2,2}$. This can be extended to a map on $T^{\infty,2}$ by using $L^2$ off-diagonal estimates for $\nabla_x e^{-(t-s)L}\div$ (obtained using by composition of the ones for  $\nabla_x e^{-((t-s)/2)L}$ and $e^{-((t-s)/2)L}\div$) as in previous arguments, we skip details.

It remains to verify the parabolic equation in the weak sense. 
Suppose $\varphi \in \calD(\R^{n+1}_+)$. Let $0<a<b$ and $B$ a ball of $\R^n$ such that $\supp \varphi\subset [a,b]\times B$.  Let $\eps>0$, and denote $I_{\eps}(t,\cdot)= I_\eps(t)=\int_0^{t-\varepsilon }e^{-(t-s)L}\div F(s,\,.\,)\,ds$, defined as a Schwartz distribution similarly to $I(t)$. 
We have 
\begin{align*}
	 - \int_0^\infty \skp{I_\eps(t),\partial_t\varphi(t)}\,dt &= \int_a^b \int_0^\infty\int_{\R^n}  \Eins_{[0, t-\eps]}(s)  F(s, x)\cdot \overline{(\nabla e^{-(t-s)L^*}\partial_t\varphi)(t,x)}\, dxdsdt
	\\
	&=  \int_0^b \int_{\R^n}  \int_{s+\eps}^b  F(s, x) \cdot \overline{( \nabla e^{-(t-s)L^*}\partial_t\varphi)(t,x)}\, dtdxds,
\end{align*}
where, as usual, the semigroup acts on $\partial_{t}\varphi(t,\cdot)$ for each $t$. 
As in the proof of Proposition \ref{prop:limit}, we  justify the use of Fubini's theorem   from   $F\in T^{\infty, 1, 2}$ and  uniformly $t\in [a,b]$,  $(s,x)\mapsto \Eins_{[0, t-\eps]}(s)(\nabla e^{-(t-s)L^*}\partial_t\varphi)(t,x) \in T^{1,\infty, 2}$ with 
$$
\|N(W_{2}(\Eins_{[0, t-\eps]}(s) (\nabla e^{-(t-s)L^*}\partial_t\varphi)(t, \cdot)))\|_{1} \lesssim C_{\varphi}.
$$
It is also uniform in $\eps$ but that is not crucial. Next, 
$$
(\nabla e^{-(t-s)L^*}\partial_t\varphi)(t,x) =( \nabla \partial_t \{e^{-(t-s)L^*}\varphi\})(t,x) - (\nabla \{\partial_t e^{-(t-s)L^*}\}\varphi)(t,x) 
$$
and, using that  $\varphi(t,\cdot)$ belongs to the Sobolev space $W^{1,2}(\R^n)$ so that the equality holds almost everywhere for fixed $t$,  
$$
(\nabla \{\partial_t e^{-(t-s)L^*}\}\varphi)(t,x)  = - (\nabla  L^*e^{-(t-s)L^*}\varphi)(t,x)= (\{\nabla e^{-(t-s)L^*}\div\} A^*\nabla \varphi)(t,x).
$$
Using $T^{1,\infty, 2}$ estimates for terms depending on $\varphi$ (we do no need them to be uniform in $\eps$: this is where we use that $t\ge s+\eps$), one can plug in this decomposition and   integrate by parts in $t$	 to obtain
	\begin{align*}
	- \int_0^\infty \skp{I_\eps(t),\partial_t\varphi(t)}\,dt  &= -
	 \int_0^b\int_{\R^n} F(s,x)\cdot \overline{\nabla e^{-\eps L^*}\varphi(s+\eps, x)}\,dxds  \\ &
	\qquad- \int_0^b \int_{\R^n} \int_{s+\eps}^bF(s,x)\cdot \overline{ (\{\nabla e^{-(t-s)L^*}\div\} A^*\nabla \varphi)(t,x)}\,dtdxds.
	\end{align*}
Now, we take limits in each term. For $I_{\eps}(t)$, similar  analysis to the one in  Proposition \ref{prop:limit} and dominated convergence show that $I_{\eps}(t)$ converges to $I(t)$ in $L^1_{loc}(\calD'(\R^{n}))$. In particular,
$$ \int_0^\infty \skp{I_\eps(t),\partial_t\varphi(t)}\,dt \to  \int_0^\infty \skp{I(t),\partial_t\varphi(t)}\,dt.$$
Further, if one uses the full assumption on $F$, then $I \in L^2_{loc}(\R^{n+1}_{+})$ so that this 
integral rewrites as the double Lebesgue integral $\iint I (t,x)\overline {\partial_{t}\varphi(t,x)}\, dtdx.$
Next, adapting  case 2 and case 4 of the proof of Proposition \ref{prop:limit}, we also obtain
$$ \int_0^b\int_{\R^n} F(s,x)\cdot \overline{\nabla e^{-\eps L^*}\varphi(s+\eps, x)}\,dxds \to  \int_0^\infty\int_{\R^n} F(s,x)\cdot \overline{\nabla_{x} \varphi(s, x)}\,dxds.
$$
For the last term, we use again the full assumption on $F$. With the same arguments as for $I$, one can show that   $\nabla_x I_\eps$ is locally square integrable on $\R^{n+1}_{+}$, uniformly in $\eps$:  for a fixed compact set $K$, $\nabla_x I_\eps$ is bounded in $L^2(K)$ for $\eps<\eps_{K}$ (the truncation in time brings harmless modifications).    In particular, after using Fubini  to exchange the $t$ and $s$ integrals, 
\begin{align*}
\int_0^b \int_{\R^n}\int_{s+\eps}^b & F(s,x)\cdot \overline{ (\{\nabla e^{-(t-s)L^*}\div\} A^*\nabla_{x} \varphi)(t,x)}\,dtdxds \\	&= \int_0^\infty\int_{\R^n}  \nabla_xI_\eps(t,x)\cdot \overline{A^*(x)\nabla_x \varphi(t,x)}\,dxdt.
\end{align*}
Indeed, this formula  holds by definition if $A^*\nabla\varphi$ is a test function. And as $\nabla_x I_\eps$ is $L^2_{loc}$, one can approximate $A^*\nabla\varphi$ by some test function in $L^2(\supp \varphi)$. Now, as $I_{\eps}$ converges to $I$ in $L^1_{loc}(\calD'(\R^{n}))$, thus in $\calD'(\R^{n+1}_{+})$, we have that  $\nabla_{x}I_{\eps}$ converges to $\nabla_{x}I$ in 	$\calD'(\R^{n+1}_{+})$. As it is bounded on $L^2(\supp\varphi)$, we have weak convergence in $L^2(\supp\varphi)$ and 
$$
 \int_0^\infty\int_{\R^n}  \nabla_xI_\eps(t,x)\cdot \overline{A^*(x)\nabla_x \varphi(t,x)}\,dxdt \to  \int_0^\infty\int_{\R^n}  \nabla_xI(t,x)\cdot \overline{A^*(x)\nabla_x \varphi(t,x)}\,dxdt.
 $$
Putting all this together, we have justified the parabolic equation in the weak sense.

\section*{Acknowledgement}
 The authors are partially supported by the ANR project ``Harmonic analysis at its boundaries'' ANR-12-BS01-0013-01. The second author was partially supported by the Karlsruhe House of Young Scientists (KHYS) and the Australian Research Council Discovery grants DP110102488 and DP120103692.
 The second author  would like to thank Peer Chr. Kunstmann for bringing the problem to our attention and for fruitful discussions.  The authors want to thank the referee for constructive remarks which helped improving the article.

\small{

}

\end{document}